\documentclass[12pt]{amsart}

\usepackage{amsfonts,amssymb,amsmath,amscd,amstext}
\usepackage[colorlinks=true,linkcolor=blue,citecolor=blue]{hyperref}
\usepackage[utf8]{inputenc}
\usepackage{microtype}
\usepackage{graphicx}
\usepackage{changes}
\usepackage{comment}
\usepackage{mathtools}
\usepackage[a4paper,top=2.5cm,bottom=2.5cm,left=2.5cm,right=2.5cm]{geometry}

\usepackage{cancel}

\renewcommand{\leq}{\leqslant}
\renewcommand{\geq}{\geqslant}
\renewcommand{\le}{\leqslant}
\renewcommand{\ge}{\geqslant}

\newcommand\restr[2]{{
  \left.\kern-\nulldelimiterspace 
  #1 
  \vphantom{\big|} 
  \right|_{#2} 
  }}

\newcommand{\G}{{\mathbb{G}}}

\newcommand{\Om}{\Omega}

\newcommand{\B}{{\mathbb{B}}}
\newcommand{\E}{{\mathbb{E}}}

\newcommand{\average}{{\mathchoice {\kern1ex\vcenter{\hrule height.4pt
				width 6pt
				depth0pt} \kern-9.7pt} {\kern1ex\vcenter{\hrule height.4pt width 4.3pt
				depth0pt}
			\kern-7pt} {} {} }}

\newcommand{\mres}{\mathbin{\vrule height 1.6ex depth 0pt width
0.13ex\vrule height 0.13ex depth 0pt width 1.3ex}}

\newtheorem{theorem}{Theorem}[section]
\newtheorem{proposition}[theorem]{Proposition}
\newtheorem{lemma}[theorem]{Lemma}
\newtheorem{corollary}[theorem]{Corollary}

\theoremstyle{definition}

\newtheorem{definition}[theorem]{Definition} 

\theoremstyle{remark}

\numberwithin{equation}{section}

\newcommand{\N}{\mathbb{N}}

\newcommand{\R}{\mathbb{R}}

\newcommand{\cE}{\mathcal{E}}

\newcommand{\cH}{\mathcal{H}}

\newcommand{\sm}{\setminus}

\renewcommand{\exp}{\mbox{\rm exp}\;\!}

\newcommand{\spn}{\mbox{\rm span}}

\newcommand{\Lie}{\mathrm{Lie}}

\newcommand{\m}{\mathrm m}

\newcommand{\NN}{\mathrm N}

\newcommand{\der}{\partial}

\newcommand{\beqas}{\begin{eqnarray*}}
\newcommand{\eeqas}{\end{eqnarray*}}
\newcommand{\beqa}{\begin{eqnarray}}
\newcommand{\eeqa}{\end{eqnarray}}
\newcommand{\beq}{\begin{equation}}
\newcommand{\eeq}{\end{equation}}
\newcommand{\bce}{\begin{center}}
\newcommand{\ece}{\end{center}}

\newcommand{\set}[1]{\left\{ #1 \right\}}            

\newtheorem{The}{Theorem}[section]
\newtheorem{Lem}[The]{Lemma}
\newtheorem{Def}[The]{Definition}
\newtheorem{Rem}[The]{Remark}
\newtheorem{Pro}[The]{Proposition}
\newtheorem{Cor}[The]{Corollary}
\newtheorem{Exa}[The]{Example}
\newtheorem{Con}{Conjecure}

\newcommand{\bt}{\begin{The}}
\newcommand{\et}{\end{The}}
\newcommand{\bl}{\begin{Lem}}
\newcommand{\el}{\end{Lem}}
\newcommand{\bd}{\begin{Def}\rm}
\newcommand{\ed}{\end{Def}}
\newcommand{\br}{\begin{Rem}\rm}
\newcommand{\er}{\end{Rem}}
\newcommand{\bpr}{\begin{Pro}}
\newcommand{\epr}{\end{Pro}}
\newcommand{\bc}{\begin{Cor}}
\newcommand{\ec}{\end{Cor}}
\newcommand{\bj}{\begin{Con}}
\newcommand{\ej}{\end{Con}}
\newcommand{\bex}{\begin{Exa}}
\newcommand{\eex}{\end{Exa}}

\begin{document}

\title[A minimal regularity for the area formula in the Engel group]{A minimal regularity for the area formula \\
in the Engel group}
\author[F. Corni]{Francesca Corni}
\address[Francesca Corni]{ 
Dipartimento di Matematica, 
Universit\`a di Bologna, 
Piazza di Porta S.Donato 5, 40126 Bologna, Italy }
\email[Francesca Corni]{francesca.corni3@unibo.it}
\author[F. Essebei]{Fares Essebei}
\address[Fares Essebei]{ 
Dipartimento di Matematica, 
Universit\`a di Pisa, 
Largo Bruno Pontecorvo 5, 56127 Pisa, Italy }
\email[Fares Essebei]{fares.essebei@dm.unipi.it}
\author[V. Magnani]{Valentino Magnani}
\address[Valentino Magnani]{ 
Dipartimento di Matematica, 
Universit\`a di Pisa, 
Largo Bruno Pontecorvo 5, 56127 Pisa, Italy }
\email[Valentino Magnani]{valentino.magnani@unipi.it}

\begin{abstract}
We prove that the upper blow-up theorem in the Engel group holds for $C^1$ submanifolds. Combining this result with the known negligibility of the singular set, we obtain an integral representation of the spherical measure for all surfaces of class $C^{1,\alpha}$ in the Engel group. A new and central aspect of our method is the suitable use of Stokes' theorem to prove the upper blow-up, which relies on the special algebraic structure of left-invariant forms in the Engel group. Some general tools are also introduced to establish area formulas in arbitrary stratified group.
\end{abstract}

\maketitle
\tableofcontents

\section{Introduction}

The study of area formulas in stratified groups has significantly developed over the past two decades, and it is still an active area of research, lying at the intersection of analysis, differential geometry, and geometric measure theory.
A number of motivations stem from Gromov’s seminal 1996 paper \cite{Gromov1996}, which also provides a general formula for the Hausdorff dimension of smooth submanifolds with respect to the Carnot–Carathéodory distance.

The appearance of the Hausdorff measure with respect to the Carnot-Carathéodory distance 
took place before, in the first works by Pansu, concerning the isoperimetric inequality in the Heisenberg group, \cite{Pansu82,Pansu82ineq}, and 
in the Heinonen's notes \cite{Hei1995CalcCG}  about Carnot groups. More information and references on area formulas for the spherical measure of submanifolds can be found for instance in \cite{CorMag23pr,JNGV22,Mag22RS,AntMer22,Vit22,CorMag25}, but the list could be enlarged. We use both names Carnot group and stratified group to indicate the same Lie group equipped with a homogeneous distance, \cite{FS82,Pan89}, see Section~\ref{sect:basic}.
However, some general facts and results established in this work actually hold for homogeneous groups, where the stratification of the Lie algebra is not necessary, see both Sections~\ref{sect:basic} and Section~\ref{sect:genres}.

However, we focus our attention on the Engel group $\E$, that is a specific Carnot group of step three. It is a connected and simply connected Lie group, whose Lie algebra 
	$$ {\rm Lie}(\mathbb{E}) = V_1 \oplus V_2 \oplus V_3$$
has a basis $(X_1, X_2, X_3, X_4)$ having
	\begin{equation}\label{particolarEngel}
		[X_1, X_2] = X_3 \quad \mbox{and} \quad [X_1, X_3] = X_4,
	\end{equation}
as the only nontrivial bracket relations. The strata of the Lie algebra are given by $V_1 = {\rm span}\{X_1, X_2\}, V_2 = {\rm span}\{X_3\}$ and $V_3 = {\rm span}\{X_4\}$.
The peculiar geometry of the Engel group is well known in the literature, as it displays additional difficulties, for instance with respect to step-2 Carnot groups, and it becomes a natural test for general settings. 

As in the Euclidean theory of sets of finite perimeter, also in Carnot groups the blow-up remains a central step to find a formula for the spherical measure. 
However, the anisotropy of dilations, the different degrees of points of a submanifold, and the possible complexity of the Lie algebra of the underlying group make the blow-up in arbitrary Carnot groups a largely open question. Moreover, with respect to the Euclidean theory, in Carnot groups the problem of establishing the negligibility of {\em singular points} also appears. These are points with ``low degree".

The (pointwise) degree of a point $p\in\Sigma$ in a submanifold $\Sigma\subset\G$ is a suitable positive 
integer $d_\Sigma(p)$ associated to the tangent space $T_p\Sigma$. Broadly speaking, it is a sort of ``formal poinwise dimension", depending on the intersection of the strata of $\Lie(\G)$ with $T_p\Sigma$. The maximum of $d_\Sigma(\cdot)$ on $\Sigma$ defines the degree $d(\Sigma)$ of $\Sigma$, 
see Section~\ref{sect3.2degree} for precise definitions. It was previously introduced by Gromov in more geometric terms, \cite[Section 0.6.B]{Gromov1996}.

In \cite{LeDMag2010} a specific coordinate system was constructed for the Engel group in order to prove the $\cH^{d(\Sigma)}$-negligibility of the singular set 
\[
C(\Sigma)=\set{p\in\Sigma: d_\Sigma(p)<d(\Sigma)},
\]
when $\Sigma$ is of class $C^{1,\alpha}$, \cite[Remark~2]{LeDMag2010}.
Although we rely on this negligibility result, we include in Section~\ref{appendix} a self-contained proof, both to keep the exposition uniform by our main coordinate system and for the reader’s convenience. 

The blow-up at points $p\in\Sigma$ of maximum degree, i.e. $d_\Sigma(p)=d(\Sigma)$, needs the general coordinate system obtained by Theorem~\ref{t:specialcoord}. Using these coordinates, we introduce a useful tool for obtaining the blow-up in arbitrary homogeneous groups, that is Theorem~\ref{genres}. Its proof is based on the observation that the argument in \cite[Theorem~1.2]{Magnani2019Area} ultimately relies only on an appropriate intrinsic Taylor expansion of the special parametrization.
 
On the other hand, the general graded basis of $\Lie(\G)$ provided by Theorem~\ref{t:specialcoord}, yields arbitrary structure coefficients $\xi_{ij}$, see  \eqref{structurecoeff}. In fact, in Section~\ref{generallambda} we need to work with a $\xi_{ij}$-parametrized coordinate system.
Area formulas for $C^1$ submanifolds of codimension three and one in $\E$ follow from the general results of \cite{Magnani2019Area}. Then we focus our attention on 2-dimensional surfaces and their points of maximum degree. Furthermore, due to the nonintegrability of the horizontal distribution of $\E$ surfaces of degree two cannot exist. We provide a rigorous proof of this nonexistence following the approach of \cite{Mag2010pams}, see Section~\ref{sect:nonexistence}. 
Surfaces of degree five are transversal, so their area formula is known from \cite{Magnani2019Area}. 

The only cases where the blow-up with $C^1$ smooth regularity were not known correspond to surfaces having degree three or four. 
The delicate case concerns degree-3 surfaces, where the blow-up with general coefficients $\xi_{ij}$ does not work if we are unable to detect further conditions on the structure coefficients.
Such a technical obstacle appeared as rather unexpected and it represents the new phenomenon of the present work. 

To establish the blow-up, we precisely need the vanishing of the structure coefficient $\xi_{13}$. Using some delicate arguments we prove that $\xi_{13}=0$ and then establish the blow-up for points of maximum degree in surfaces of degree three, see Theorem~\ref{mainstokestheorem} and Proposition~\ref{Gammas}.
The vanishing is obtained through a suitable use of Stokes’ theorem, the Maurer-Cartan equations in the Engel group and the natural condition $d(\partial \Sigma)<d(\Sigma)$, proved in Lemma~\ref{gradosulbordo}.
This is precisely the point that allows us to keep the $C^1$ regularity also for the blow-up of degree-3 surfaces, that is the lowest known regularity. 

On the other hand, as already mentioned, the negligibility of singular points is also needed, see Theorem~\ref{teotrascu}, and this will increase the regularity to obtain the area formula.
We are finally arrived at our main result.
 
\begin{theorem}\label{teo:areaengel}
	Let $\Sigma \subset \G$ be a $2$-dimensional $C^{1,\alpha}$ regular submanifold with $d(\Sigma)\le 4$. Then the following formula holds for every Borel set $B \subset \Sigma$  
	\begin{equation}\label{areafo}
		\mu_\Sigma(B) = \int_B \lvert \tau_{\Sigma, N}(p)\rvert_g d\sigma_g(p) = \int_B \beta_d(A_p \Sigma) \ d \mathcal{S}^N (p),
	\end{equation}
whenever $(d(\Sigma)-2)/d(\Sigma)<\alpha\le 1$.
\end{theorem}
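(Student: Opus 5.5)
We will combine the upper blow-up at points of maximum degree with the negligibility of the singular set. The blow-up gives the density of $\mu_\Sigma$ with respect to $\mathcal{S}^N$ on $\Sigma\setminus C(\Sigma)$, and negligibility removes $C(\Sigma)$. Here $N=d(\Sigma)$.

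First we dispose of the degrees. By the nonexistence result of Section~\ref{sect:nonexistence}, no $2$-dimensional surface has degree two, so $d(\Sigma)\in\{3,4\}$; the case $d(\Sigma)\ge5$ is transversal and already covered by \cite{Magnani2019Area}.

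Next we split $\Sigma=C(\Sigma)\cup(\Sigma\setminus C(\Sigma))$. By Theorem~\ref{teotrascu} (equivalently \cite[Remark~2]{LeDMag2010}), the $C^{1,\alpha}$ regularity with $\alpha>(N-2)/N$ gives $\mathcal{H}^N(C(\Sigma))=0$, hence $\mathcal{S}^N(C(\Sigma))=0$. On the Riemannian side we also need $\mu_\Sigma(C(\Sigma))=0$. Since $\mu_\Sigma=|\tau_{\Sigma,N}|_g\,\sigma_g$ and the $N$-projection $\tau_{\Sigma,N}(p)$ vanishes exactly where $d_\Sigma(p)<N$, both sides of \eqref{areafo} vanish on $C(\Sigma)$. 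It therefore suffices to prove the formula for Borel $B\subset\Sigma\setminus C(\Sigma)$.

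At points $p$ of maximum degree we apply Theorem~\ref{genres}, in the special coordinates of Theorem~\ref{t:specialcoord}. It yields the upper blow-up
\[
\lim_{r\to0}\frac{\mu_\Sigma(\Sigma\cap \B(p,r))}{r^N}=\beta_d(A_p\Sigma),
\]
provided the intrinsic Taylor expansion of the parametrization holds. For $d(\Sigma)=4$ we verify this expansion directly in the $\xi_{ij}$-parametrized coordinates of Section~\ref{generallambda}. For $d(\Sigma)=3$ the expansion requires $\xi_{13}=0$, which is supplied by Theorem~\ref{mainstokestheorem} and Proposition~\ref{Gammas}; those results use Stokes' theorem together with Lemma~\ref{gradosulbordo}. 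This step needs only $C^1$ regularity.

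Finally, $\mathcal{S}^N$ is Borel regular and $\mu_\Sigma$ is a Radon measure on $\Sigma\setminus C(\Sigma)$, absolutely continuous with respect to $\mathcal{S}^N$ there. The measure-theoretic area formula (the spherical density theorem, as in Federer 2.10.19 for spherical measures) then gives $\mu_\Sigma(B)=\int_B\beta_d(A_p\Sigma)\,d\mathcal{S}^N$ on that set. Absolute continuity follows from the positivity and boundedness of the density together with the upper density estimate. Combining with the first equality, which is the definition of $\mu_\Sigma$, we obtain \eqref{areafo}.

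The main obstacle is the degree-3 blow-up: it hinges on showing $\xi_{13}=0$. If this were unavailable, the Taylor expansion in Theorem~\ref{genres} would fail. Everything else is bookkeeping once those results are in place.
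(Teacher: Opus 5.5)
Your outline follows the paper's own proof step by step: Theorem~\ref{horizontalnon} excludes degree two, and Theorem~\ref{teotrascu} disposes of $C(\Sigma)$. Your remark that $\mu_\Sigma(C(\Sigma))=0$ because $\tau_{\Sigma,N}$ vanishes there is correct, and the paper leaves it implicit. Propositions~\ref{Gammas} and~\ref{Gammas2} then give the blow-up through Theorem~\ref{genres}, with $\xi_{13}=0$ from Theorem~\ref{mainstokestheorem} in degree three, and a measure-theoretic area formula concludes.

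The genuine error is the density you use. Theorem~\ref{genres} does not say that the centered ratio $\mu_\Sigma(\Sigma\cap\B(p,r))/r^N$ tends to $\beta_d(A_p\Sigma)$. It computes the spherical Federer density $\theta^N(\mu_\Sigma,p)$ of \eqref{Federerdens}, which is a supremum over all small closed balls that merely \emph{contain} $p$. This distinction matters. Rescaling centered balls only produces $\mathcal{H}^2_{|\cdot|}(A_p\Sigma\cap\B(0,1))$ in the limit. By contrast, $\beta_d(A_p\Sigma)$ in \eqref{sfe} is the maximum of $\mathcal{H}^2_{|\cdot|}(A_p\Sigma\cap\B(x,1))$ over all unit balls containing the origin. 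For a general homogeneous distance the centered quantity can be strictly smaller. The two agree under symmetry assumptions such as multiradiality, which are not assumed here. So your displayed blow-up identity is not correct as stated.

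For the same reason, the last step cannot rest on Federer 2.10.19. Centered upper densities only give two-sided estimates between $\mu_\Sigma$ and $\mathcal{S}^N$, with mismatched constants, not an identity. An exact identity built on centered balls would represent the centered Hausdorff measure, not $\mathcal{S}^N$. The identity with $\mathcal{S}^N$ comes from Theorem~\ref{teo:areageneral}, fed with the non-centered density.

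The repair uses only results you already invoke. Take $\theta^N(\mu_\Sigma,p)=\beta_d(A_p\Sigma)$ from \eqref{eq:betadens} at every $p\in A=\Sigma\setminus C(\Sigma)$, and check the hypotheses of Theorem~\ref{teo:areageneral} on $A$:
\begin{enumerate}
\item this is local finiteness of $\mu_\Sigma$;
\item this holds because $\beta_d(A_p\Sigma)>0$, so the zero set of the density in $A$ is empty;
\item this follows from finiteness of $\theta^N(\mu_\Sigma,\cdot)$ on $A$, which is the upper-density argument you sketched.
\end{enumerate}
Combined with $\mu_\Sigma(C(\Sigma))=\mathcal{S}^N(C(\Sigma))=0$, this gives the area formula for every Borel $B\subset\Sigma$.
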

The intrinsic measure $\mu_\Sigma$ is introduced in Definition~\ref{intrinsicmeas} and $\beta_d(A_p \Sigma)$ is the \textit{sherical factor}, see Definition~\ref{sfe}. 
Combining Proposition \ref{Gammas}, Proposition \ref{Gammas2}, Theorem \ref{genres}, Theorem~\ref{teo:areageneral}, Theorem~\ref{teotrascu}, and Theorem~\ref{horizontalnon} the proof of Theorem~\ref{teo:areaengel} follows.
We point out that Theorem~\ref{teo:areageneral} could be also replaced by the general tool in \cite[Theorem~1.2]{CorMag25}.

If we equip the Engel group with a multiradial distance,
see \cite[Definition 1.3]{CorMag25}, we can apply
\cite[Theorem 1.5]{CorMag25}, that joined with 
Theorem \ref{teo:areaengel} gives the following.
\begin{corollary}
Let $\mathbb{E}$ be the Engel group equipped with a multiradial distance $d$. Let $\Sigma \subset \mathbb{E}$ be a $2$-dimensional $C^{1,\alpha}$ regular submanifold with $d(\Sigma)=N\le 4$. Thus, setting $\mathcal{S}_d^N=\omega_d(\mathcal{F}_{1,1,0}) \mathcal{S}^N$ if $N=3$ and $\mathcal{S}_d^N=\omega_d(\mathcal{F}_{1,0,1}) \mathcal{S}^N$ if $N=4$ according to \cite[Theorem~3.3]{CorMag25}, for every Borel set $B \subset \Sigma $ we have
\begin{equation}\label{areafo}
    \mathcal{S}_d^N(B) = \int_B \lvert \tau_{\Sigma, N}(p)\rvert_g d\sigma_g(p)
\end{equation}
whenever $(d(\Sigma)-2)/d(\Sigma)<\alpha\le 1$.
\end{corollary}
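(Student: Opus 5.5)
The corollary should follow by combining Theorem~\ref{teo:areaengel} with the computation of the spherical factor for multiradial distances in \cite{CorMag25}. The plan is to show that $\beta_d(A_p\Sigma)$ is a constant on the relevant points, equal to $\omega_d(\mathcal{F}_{1,1,0})$ when $N=3$ and to $\omega_d(\mathcal{F}_{1,0,1})$ when $N=4$. Then \eqref{areafo} of Theorem~\ref{teo:areaengel} gives $\int_B |\tau_{\Sigma,N}|_g\,d\sigma_g = \omega\,\mathcal{S}^N(B) = \mathcal{S}^N_d(B)$ for every Borel $B\subset\Sigma$.

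First, I split $\Sigma$ into the set $\Sigma\setminus C(\Sigma)$ of points of maximum degree $d_\Sigma(p)=N$ and the singular set $C(\Sigma)$. By Theorem~\ref{teotrascu}, which applies under the assumption $(N-2)/N<\alpha\le1$, the set $C(\Sigma)$ is $\mathcal{S}^N$-negligible. On $C(\Sigma)$ the projected tangent vector $\tau_{\Sigma,N}$ vanishes, since its degree-$N$ component is zero there. Hence both sides of the formula vanish on $C(\Sigma)$, and it suffices to treat $B\subset\Sigma\setminus C(\Sigma)$.

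Next, at a point $p$ of maximum degree I identify the homogeneous tangent subgroup $A_p\Sigma$. Since $\dim\Sigma=2$ and degree two is excluded by Theorem~\ref{horizontalnon}, the only possibilities are the following. If $N=3$, the tangent is spanned by one horizontal vector together with $X_3$ modulo lower-order terms, so $A_p\Sigma$ is a two-dimensional subgroup of type $(1,1,0)$, one dimension in $V_1$ and one in $V_2$. If $N=4$, the tangent is spanned by one horizontal vector together with $X_4$, giving a subgroup of type $(1,0,1)$. This identification is the standard description of $A_p\Sigma$ via the graded components of $T_p\Sigma$; it is also implicit in the special coordinates of Theorem~\ref{t:specialcoord}.

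Finally, I invoke \cite[Theorem~1.5 and Theorem~3.3]{CorMag25}. For a multiradial distance, the spherical factor $\beta_d(\mathcal{F})$ is constant on each class of homogeneous subgroups of a given type $\mathcal{F}$, and it equals $\omega_d(\mathcal{F})$. Substituting this constant into \eqref{areafo} concludes the proof.

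The main obstacle is the identification step: checking that the subgroups arising as $A_p\Sigma$ are exactly the ones covered by the families $\mathcal{F}_{1,1,0}$ and $\mathcal{F}_{1,0,1}$ of \cite{CorMag25}, so that their constancy theorem applies. I would verify this directly from the definition of $A_p\Sigma$ as the span of the degree-maximal projections of a basis of $T_p\Sigma$, using the bracket relations \eqref{particolarEngel}.
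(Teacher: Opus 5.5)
Your overall route matches the paper's: combine Theorem~\ref{teo:areaengel} with the constancy of the spherical factor for multiradial distances from \cite{CorMag25}. Your treatment of $C(\Sigma)$ is fine. So is the case $N=4$, because $X_4$ is central, so $\mathrm{span}\{v,X_4\}$ is an abelian subgroup for every $v\in V_1$.

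The gap is the identification step for $N=3$. It does not follow from ``the tangent plane is spanned by a horizontal vector and $X_3$ modulo lower order terms'' that $A_p\Sigma$ is a subgroup. The check you propose, via the definition of $A_p\Sigma$ and the bracket relations, would in general refute it. For $v=aX_1+bX_2$ one has $[v,X_3]=aX_4$, so $\mathrm{span}\{v,X_3\}$ is a subalgebra only when $a=0$. Nor is this implicit in Theorem~\ref{t:specialcoord}. Those coordinates show that the homogeneous tangent space of $p^{-1}\Sigma$ at $0$ is $\mathrm{span}\{Y_1,Y_3\}$, but $[Y_1,Y_3]=\xi_{13}Y_4$, and nothing in that theorem forces $\xi_{13}=0$. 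So, as written, you cannot place $A_p\Sigma$ in the class of homogeneous subgroups to which your cited constancy result from \cite{CorMag25} applies.

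The missing ingredient is Theorem~\ref{mainstokestheorem}, the main new point of the paper: at every point of maximum degree of a degree-$3$ surface, $\xi_{13}=0$. This is not a formal consequence of the Lie structure. It comes from Stokes' theorem applied to $\theta_4$ on $\Sigma_r=\phi(B_r)$, using $d\theta_4=-\xi_{13}\theta_1\wedge\theta_3-\xi_{23}\theta_2\wedge\theta_3$ together with the vanishing of $\int_{\partial\Sigma_r}\theta_4$ given by Lemma~\ref{gradosulbordo}. In effect it is a weak, $C^1$ form of the statement that the homogeneous tangent space at a point of maximum degree is a subalgebra.

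Once you cite it, the argument closes:
\begin{itemize}
\item $Y_1$ is parallel to $X_2$, so $A_p\Sigma=\mathrm{span}\{X_2,X_3\}$, an abelian subgroup and in fact the only homogeneous subgroup of type $(1,1,0)$.
\item Hence $\beta_d(A_p\Sigma)$ takes the same value at all points of maximum degree, and \cite{CorMag25} identifies it with $\omega_d(\mathcal{F}_{1,1,0})$.
\item Your final substitution into the area formula of Theorem~\ref{teo:areaengel} then goes through.
\end{itemize}
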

Useful examples of multiradial distances are the distance $d_{\infty}$, \cite{FSSC5}, and the family of homogeneous distances arising from \cite[Theorem 2]{HebSik90}.
We close this introduction stating another corollary of Theorem~\ref{teo:areaengel}, that is more manageable for applications and relies on the area formula for 
transversal submanifolds abd curves of class $C^1$, \cite[Theorem~1.3]{Magnani2019Area}. Indeed surfaces of degree five and hypersurfaces in $\E$ are precisely transversal submanifolds, namely they have the highest possible Hausdorff dimension among all $C^1$ surfaces.

\begin{corollary}\label{cor:areaengel}
For every submanifold $\Sigma \subset \E$ 
of class $C^{1,\alpha}$ with $\alpha>1/2$,
the following area formula holds   
	\begin{equation}\label{areafo}
		\mu_\Sigma(B) = \int_B \lvert \tau_{\Sigma, N}(p)\rvert_g d\sigma_g(p) = \int_B \beta_d(A_p \Sigma) \ d \mathcal{S}^N (p)
	\end{equation}
for every Borel set $B \subset \Sigma$.
\end{corollary}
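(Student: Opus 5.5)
The plan is a case analysis on the dimension $k$ of $\Sigma\subset\E$, using that $\E$ is $4$-dimensional, so $1\le k\le 3$ (the case $k=4$ is open subsets of $\E$ and follows from the Haar measure representation, or is simply excluded as trivial). In each case we either reduce to the area formula for transversal submanifolds and curves of class $C^1$, \cite[Theorem~1.3]{Magnani2019Area}, or to Theorem~\ref{teo:areaengel}. The hypothesis $\alpha>1/2$ is chosen to cover the worst threshold $(d(\Sigma)-2)/d(\Sigma)$ occurring in the admissible $2$-dimensional cases.

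\emph{Case $k=1$ and $k=3$.} For curves, \cite[Theorem~1.3]{Magnani2019Area} already gives the area formula for every $C^1$ curve, with the singular set of lower degree being negligible. For hypersurfaces ($k=3$), every $C^1$ hypersurface is transversal in the sense that its degree is the maximal one, namely $Q-1=6$, and again \cite[Theorem~1.3]{Magnani2019Area} applies directly to $C^1$ hypersurfaces, with negligible characteristic set. In both cases the regularity $C^{1,\alpha}$ is more than enough.

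\emph{Case $k=2$.} Here the possible degrees are $d(\Sigma)\in\{2,3,4,5\}$: a $2$-plane in $\Lie(\E)$ has degree at least $2$ and at most $2+3=5$. By Theorem~\ref{horizontalnon}, $d(\Sigma)=2$ cannot occur, since a surface with horizontal tangent everywhere contradicts nonintegrability. If $d(\Sigma)=5$, the surface is transversal, namely generic points have tangent space complementary to the horizontal subgroup in the appropriate sense, so \cite[Theorem~1.3]{Magnani2019Area} gives the formula for $C^1$ surfaces. If $d(\Sigma)\in\{3,4\}$, we apply Theorem~\ref{teo:areaengel}, which needs $\alpha>(d(\Sigma)-2)/d(\Sigma)$, that is $\alpha>1/3$ for $d=3$ and $\alpha>1/2$ for $d=4$. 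Both are implied by $\alpha>1/2$.

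The only delicate point is bookkeeping. First, we must check that "transversal" in \cite{Magnani2019Area} exactly matches hypersurfaces and degree-$5$ surfaces, as asserted in the introduction. Second, the intrinsic measure $\mu_\Sigma$ and spherical factor $\beta_d$ in \eqref{areafo} must coincide with the objects in the cited theorem. Third, when $d(\Sigma)$ equals the top degree, the formula in \cite{Magnani2019Area} must hold on all Borel sets, including singular points. I expect this identification of notations and degrees to be the main (minor) obstacle; everything else is an immediate combination of the cited results.
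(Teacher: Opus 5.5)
Your proposal is correct and follows the paper's own argument. Curves, hypersurfaces and degree-$5$ surfaces are handled by the $C^1$ area formula for transversal submanifolds and curves \cite[Theorem~1.3]{Magnani2019Area}, degree-$2$ surfaces are excluded by Theorem~\ref{horizontalnon}, and surfaces of degree $3$ or $4$ follow from Theorem~\ref{teo:areaengel}, whose thresholds $1/3$ and $1/2$ are both covered by $\alpha>1/2$. (One wording fix: degree $5$ at a point means the left-translated tangent plane meets the horizontal subspace $V_1$ only at $0$; it is not a statement about a ``horizontal subgroup''.)
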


\section{Preliminaries}

We divide into three parts all the notions that are necessary to study the area formula for the spherical measure in Carnot groups.

\subsection{Basic facts on homogeneous groups}\label{sect:basic}

A graded group $\G$ is a connected, simply connected and nilpotent Lie group, whose Lie algebra $\Lie(\G)$ is {\em graded}. Precisely, we have 
\[
\Lie(\G) = V_1 \oplus \ldots \oplus V_\iota \quad \text{and}\quad  [V_i, V_j] \subset V_{i+j} \quad \mbox{for all } i,j \ge 1,
\]
where $[V_i, V_j] =\spn\{ [X, Y] \ : \ X \in V_i, \, Y \in V_j\}$ and $V_i=\set{0}$ for $i>\iota$. The positive integer $\iota$ is called the \emph{step} of $\G$. We say that $\G$ is a \emph{stratified group} if we add the stronger condition that 
$[V_1, V_i] = V_{i+1}$ for $i=1,\ldots,\iota - 1$.

In our assumptions, the exponential map
$\exp : {\rm Lie}(\G) \rightarrow \G$ is a bianalytic diffeomorphism, that allows us to identify in a standard way $\G$ with ${\rm Lie}(\G)$. 
As a consequence, {\em we can identify $\G$ with a graded vector space} $H_1 \oplus \ldots \oplus H_\iota$, endowed with a Lie algebra structure, whose Lie group operation is given by the 
{\em Baker--Campbell--Hausdorff} formula, in short BCH formula. Throughout the paper we will always assume this condition on $\G=H_1\oplus\cdots\oplus H_\iota$, and we will say that $\G$ has {\em graded decomposition} $H_1\oplus\cdots\oplus H_\iota$. 

The terms polynomials $c_j$ of the BCH formula can be written by an explicit recursive formula, see for instance \cite[Lemma~2.15.3]{Var84Lie}, getting
$$
x y = \sum_{j=1}^\iota c_j(x,y) = x + y + \frac{[x,y]}{2} + \sum_{j=3}^\iota c_j(x,y)
$$
for all $x,y \in \G$.
Associated with the grading are the {\em dilations} $\delta_r:\G\to\G$, defined as 
$\delta_r=r^iv$ for all $r>0$ and $v\in H_i$. They constitute a one-parameter group of Lie group homomorphisms.  
We say that $d:\G\times\G\to[0,+\infty)$ is a \emph{homogeneous distance} on $\G$, if it is a continuous distance on $\G$ satisfying
the additional conditions
$$
d(z x, z y) = d(x, y)   \quad \mbox{and } \quad d(\delta_r(x), \delta_r(y)) = \lambda d(x, y) \quad \mbox{for all } x,y,z \in \G, \, r > 0.$$
Let us denote by 
\begin{equation}\label{balls}
    B(x,r) = \{y \in \G \ : \ d(x,y) < r\} \quad \mbox{and } \; \B(x,r) = \{y \in \G \ : \ d(x,y) \leq r\}
\end{equation}
the open and closed balls with respect to a homogeneous distance $d$, respectively. 
We say that $\G$ is a  \emph{homogeneous group} if its a graded group, equipped with the above family of dilations $(\delta_\lambda)_{\lambda > 0}$ and a homogeneous distance $d$.
The integer $n$ denotes the dimension of $\G=H_1\oplus\cdots\oplus H_\iota$ as vector space.
Throughout the paper, $\G$ denotes a homogeneous group with direct decompositions $H_1\oplus\cdots H_\iota$, if not otherwise stated.

\begin{definition}
A graded basis $(e_1,  \ldots, e_n)$ of a stratified group $\G$ is a basis of vectors such that 
$$
(e_{m_{j-1} + 1}, \ldots, e_{m_j}) \quad \mbox{is a basis of } H_j \; \mbox{for each } j=1,\ldots,\iota,
$$
where we have defined the integers
$$
m_0 = 0, \quad m_j = \sum_{i=1}^j n_i \quad \mbox{and }\ n_j \coloneqq {\rm dim}(H_j) \quad \mbox{for every } j=1, \ldots, \iota,
$$
and $n_1 + \ldots + n_\iota$ coincides with the linear dimension $n$ of $\G$.
A graded basis provides
the {\em graded coordinates} $x = (x_1, \ldots , x_n) \in \mathbb{R}^n$, defining the unique element $x = \sum_{i=1}^n x_i e_i \in \G$. The unique left invariant vector fields 
$X_i\in\Lie(\G)$ such that $X_i(0)=e_i$ automatically define a 
{\em graded basis} $(X_1,\ldots,X_n)$ of $\Lie(\G)$.
\end{definition}

We additionally consider a left invariant Riemannian metric $g$ that makes orthonormal the fixed graded basis
$(X_1,\ldots,X_n)$ of $\Lie(\G)$. We say that 
this Riemannian metric $g$ is {\em graded}.
Such a metric also extends to $\Lambda_k({\rm Lie}(\G))$ and we denote by $| \cdot |_g$ the associated norm. The same metric defines a scalar product on $T_0\G$. Since $\G$ also has a linear structure, such scalar product induces a scalar product $|\cdot|$ on $\G$ via the identification of $\G$ with $T_0\G$. 
In the sequel, we assume that a graded Riemannian metric and the scalar product induced on $\G$ are fixed.

\subsection{Degree of submanifolds and the homogeneous tangent space} \label{sect3.2degree}
We fix a graded basis $(X_1,\ldots,X_n)$ of $\Lie(\G)$, along with the
associated graded basis $(e_1,e_2,\ldots,e_\iota)$ of $\G$,
and graded coordinates $(x_1,\ldots,x_n)$.

Next we introduce the degree of $k$-vectors, following \cite{Mag13Vit}.

\begin{definition}[Degree of $k$-vectors and of $k$-vector fields]
For each $1 \leq i \leq n$
we consider the unique integer $d_i\in\{1,\ldots,\iota\}$ such that $m_{d_i - 1} < d_i \leq m_{d_i}$. We say that
$d_i$ is the {\em degree} of $x_i$, of $e_i$, and of the left invariant vector field $X_i$, where we write $d(X_i)=d_i$. We denote by $I_{k,n}$ the family of all multi-index $I = (i_1, \ldots, i_k) \in \{1,\ldots, n\}^k$ such that $1 \le i_1 <i_2< \ldots < i_k \le n$. 
We define the \emph{$k$-vector} 
\begin{equation}\label{generaldefpvect}
    X_I = X_{i_1} \wedge \ldots \wedge X_{i_{k}} \in \Lambda_k({\rm Lie}(\G)),
\end{equation}
for each $I \in I_{k,n}$ we define the degree ${\rm deg}(X_I) = d_{i_1} + \ldots + d_{i_k}$. 
\end{definition}

If we fix a point $p\in\G$, the space $\Lambda_k({\rm Lie}(\G))$ can be also identified 
with both the space of \emph{left invariant $k$-vector fields} and with $\Lambda_k(T_p\G)$. The isomorphism can be obtained by considering first 
the isomorphism $J_p:\Lie(\G)\to T_p\G$,
$J_p(Y)=Y(p)$, hence we have the new isomorphism
$$
(\Lambda_kJ_p):\Lambda_k({\rm Lie}(\G))\to \Lambda_k(T_p\G)
$$
such that 
$(\Lambda_kJ_p)(Y_1\wedge Y_2\wedge \cdots \wedge Y_k)=Y_1(p)\wedge Y_2(p)\wedge \cdots \wedge Y_k(p)\in\Lambda_k(T_p\G)$
for every simple $k$-vector 
$Y_1\wedge Y_2\wedge \cdots \wedge Y_k\in\Lambda_k(\Lie(\G))$.

\begin{definition}\label{tangentpvect}
Let $\G=H_1\oplus\cdots \oplus H_\iota$ be a homogeneous group and let $p\in\G$. 
For $Y_1,\ldots,Y_k\in\Lie(\G)$, we consider a simple $k$-vector
$$Y_1 \wedge \ldots \wedge Y_k \in \Lambda_k(\Lie(\G),
$$
and the fixed graded basis $(X_1,\ldots,X_n)$ of $\Lie(\G)$. 
Then there exists unique coefficients 
$c_I\in\R$ such that 
$$
Y_1 \wedge \ldots \wedge Y_k = \sum_{I \in I_{k,n}} c_I X_I \in \Lambda_k({\rm Lie}(\G))
$$
and we define the {\em degree} of $Y_1\wedge Y_2\wedge\cdots \wedge Y_k$ as 
\[
\deg(Y_1\wedge\cdots\wedge Y_k)=\max\{\deg(X_I): c_I\neq0 \}.
\]
It can be checked that this definition does not depend on the graded basis $(X_1,\ldots,X_n)$.
Now we consider $v_1,v_2,\ldots v_k \in T_p\G$, the simple $k$-vector 
$v_1\wedge v_2\wedge \cdots \wedge v_k\in\Lambda_k(T_p\G)$ and we
can define 
\begin{equation}\label{degkvect}
\deg(v_1\wedge v_2\wedge \cdots\wedge v_k)=
\deg\Big((\Lambda_kJ_p)^{-1}(v_1\wedge v_2\wedge \cdots\wedge v_k)\Big).
\end{equation}
\end{definition}
It can be checked that the notion of degree 
\eqref{degkvect} does not depend on the choice of the graded
basis $(X_1,\ldots,X_n)$ of $\Lie(\G)$.
Following the notation in \eqref{degkvect}, 
we notice that choosing $Z_i\in\Lie(\G)$
such that $Z_i(p)=v_i\in T_p\G$ for all $i=1,\ldots,k$, namely $J_p^{-1}(v_i)=Z_i$, 
we have 
\[
(\Lambda_kJ_p)^{-1}(v_1\wedge v_2\wedge \cdots\wedge v_k)=
J_p^{-1}(v_1)\wedge J_p^{-1}(v_2)\wedge \cdots\wedge J_p^{-1}(v_k)=Z_1\wedge \cdots \wedge Z_k,
\]
hence we also have $\deg(v_1\wedge v_2\wedge \cdots\wedge v_k)=
\deg(Z_1\wedge Z_2\wedge \cdots \wedge Z_k)$.
We finally remark that the previous notions of degree for simple left invariant $k$-vector fields and for simple $k$-vectors of $T_p\G$ can be automatically extended to $k$-vectors.

We are now in the position to introduce the notion of degree and of pointwise degree for submanifolds, see for instance \cite{Magnani2019Area, Mag13Vit} for more information.

\begin{definition}\label{tangentpvectSigma}
 Let $\Sigma\subset\G$ be a $C^1$ smooth $k$-dimensional submanifold of a homogeneous group $\G$.
    A \emph{tangent k-vector} to $\Sigma$ at $p\in\Sigma$ is
$$ \tau_\Sigma(p) = t_1 \wedge \ldots \wedge t_k \in \Lambda_k(T_p \Sigma),$$
where $(t_1, \ldots, t_k)$ is a basis of $T_p\Sigma$.
We define the \emph{pointwise degree} of $\Sigma$ at $p\in\Sigma$ as 
\begin{equation}\label{degree}
d_\Sigma(p) =\deg\big(\tau_\Sigma(p)\big)
\end{equation} 
and the \emph{degree} of $\Sigma$ as the 
positive integer $d(\Sigma) = \max_{p \in \Sigma} d_\Sigma(p)$.
A point $p \in \Sigma$ has {\em maximum degree} when $d_\Sigma(p) = d(\Sigma)$.
\end{definition}

The notion of degree allows us to easily introduce {\em horizontal submanifolds} as those 
smooth submanifolds whose degree equals their topological dimension. In fact, horizontal submanifolds are characterized by having their tangent bundle contained in the horizontal subbundle of the group, see for instance \cite{Mag14} for a characterization of horizontal submanifolds.

\begin{definition}[Homogeneous tangent space]\label{homogene} Let $p \in \Sigma$ and set $d_\Sigma(p) = N$. We consider a tangent $k$-vector $\tau_\Sigma(p)$ to $\Sigma$ at $p$, along with the unique left invariant $k$-vector field $\xi_{p,\Sigma}$ such that $\xi_{p,\Sigma}(p) = \tau_\Sigma(p)$. Then we have the
unique coefficients $c_{p,I}\in\R$ such that
$$
 \xi_{p,\Sigma}= \sum_{I\in I_{k,n}} c_{p,I} X_I\in\Lambda_k(\Lie(\G))
$$
and we accordingly define the component
\[
\xi_{p,\Sigma,N}=\!\!\!\!\!\! \sum_{\substack{I\in I_{k,n}, \\ \deg(X_I)=N}} c_{p,I} X_I
\]
of $\xi_{p,\Sigma}$ having degree $N$. Since $d_\Sigma(p)=N$, we have $\xi_{p,\Sigma,N}\neq0$. Using \cite[Corollary 3.6]{Mag13Vit}, 
one easily notices that the following definition of \emph{Lie homogeneous tangent space} of $\Sigma$ at $p$
$$
\mathcal{A}_p \Sigma = \{ X \in {\rm Lie}(\G) \, : \, X \wedge \xi_{p,\Sigma,N} = 0 \}
$$
is well posed.
\end{definition}

\subsection{Intrinsic measure and measure-theoretic area formula}
We follow Section 2.10.1 of \cite{Federer69} to recall the general construction of a measure arising 
from a gauge function $\zeta : \mathcal{F} \rightarrow [0,+\infty]$. We have denoted by $\mathcal{F} \subset \mathcal{P}(\G)$ any nonempty family of closed subsets of a homogeneous group $\G$. For $\delta > 0$ and $A \subset \G$, we define
\begin{equation*}
    \phi_{\delta, \zeta}(A) = \inf \Biggl\{ \sum_{j=0}^\infty \zeta(B_j) \, : \, A \subset \bigcup_{j=0}^\infty B_j, \;\ {\rm diam}(B_j) \leq \delta, \; B_j \in \mathcal{F} \Biggr\}.
\end{equation*}
Considering $\phi_\zeta(A) = \sup_{\delta > 0} \phi_{\delta, \zeta}(A)$, we get a Borel regular measure $\phi_\zeta$ over the metric space $\G$. We introduce a specific gauge
$$
\zeta_\alpha(S) = ({\rm diam}(S)/2)^\alpha \quad \mbox{for every $S \subset \G$.}
$$
If we choose $\mathcal{F}$ to be the family $\mathcal{F}_b$ of all closed balls of $\G$ with positive radius and $\widetilde\zeta_\alpha = \restr{\zeta_\alpha}{\mathcal{F}_b}$, then $\phi_{\widetilde\zeta_\alpha}$ is the $\alpha$-\emph{dimensional spherical measure},
usually denoted by $\mathcal{S}^\alpha$. 
 In the case $\mathcal{F}$ coincides with the family of all closed sets and $k \in \{1, \ldots, n-1\}$, we define the Hausdorff measure
 \begin{equation}\label{hausmeas}
 \mathcal{H}^k_{| \cdot |}(E) = \mathcal{L}^k(\{x \in \R^k\, : \, \lvert x \rvert_E \le 1\}) \sup_{\delta > 0} \phi_\delta^k(E),
 \end{equation}
where $|\cdot|$ is the norm arising from a scalar product on $\G$, $\mathcal{L}^k$ is the standard Lebesgue measure on $\R^k$ and $|\cdot |_E$ is the Euclidean norm. Now, following \cite{Magnani2019Area}, we give the fundamental notion of intrinsic measure. 

\begin{definition}
Let $\Sigma \subset \G$ be a $k$-dimensional $C^1$ submanifold of degree $\NN$ and let $g$ be our fixed graded Riemannian on $\G$. We also choose an arbitrary Riemannian metric
$\tilde g$ on $\G$. Let $\tau_\Sigma$ be a tangent $k$-vector field on $\Sigma$ 
such that 
$$        \lvert \tau_\Sigma(p) \rvert_{\tilde g} = 1 \quad \mbox{for each } p \in \Sigma.
$$
Following the notation of Definition~\ref{homogene}, we define
\begin{equation}\label{Ntgvf}
    \tau_{\Sigma, N}^{\tilde g}(p) \coloneqq \xi_{p,\Sigma,N}(p) \quad \mbox{for each } \; p \in \Sigma.
\end{equation}
Then we define the \emph{intrinsic measure} of $\Sigma$ in $\G$ as
\begin{equation}\label{intrinsicmeas}
    \mu_\Sigma = \lvert\tau_{\Sigma, N}^{\tilde g} \rvert_g \, \sigma_{\tilde g},
\end{equation}
where $\sigma_{\tilde g}$ is the $k$-dimensional Riemannian measure induced by $\tilde g$ on $\Sigma$. 
\end{definition}
We observe that the tangent $k$-vector field $\tau_\Sigma$ need not be continuous in the case $\Sigma$ is not orientable.

\begin{definition} We fix $a > 0$, $p \in \G$ and consider a Borel regular measure $\mu$ over $\G$. We define the \emph{spherical Federer density} $\theta^N(\mu,\cdot)$ as 
\begin{equation}\label{Federerdens}
   \theta^a(\mu,  p) = \inf_{\varepsilon > 0} \sup \Bigl\{ \frac{2^a \mu(\B)}{{\rm diam}(\B)^a} \, : \, x \in \B \in \mathcal{F}_b, \; {\rm diam}(\B) < \varepsilon \Bigr\}.
   \end{equation}
\end{definition}
The previous definition was first introduced in \cite{Mag30} to establish various measure-theoretic area formulas, including the one for the spherical measure \cite[Theorem~11]{Mag30}. We now state a slightly more general version of this formula. 
The next theorem adapts \cite[Theorem 5.7]{LecMag22} for the case of homogeneous groups. 

\begin{theorem}\label{teo:areageneral}
Let $\mu$ be a regular measure and a Borel measure over a homogeneous group $\G$, 
and let $a>0$. We fix a Borel set $A \subset \G$ and
assume that the following conditions hold:
\begin{enumerate}
    \item $A$ has a countable covering whose elements are open and have $\mu$-finite measure,
    \item the subset $\{x \in A : \theta^a(\mu,x) = 0\}$ is $\sigma$-finite with respect to $\mathcal{S}^a$,
    \item $\mu \mres A$ is absolutely continuous with respect to $\mathcal{S}^a \mres A$.
\end{enumerate}
Then $\theta^a(\mu, \cdot) : A \to [0,+\infty]$ is Borel and for every Borel set $B \subset A$ we have
\begin{equation*}
        \mu(B) = \int_B \theta^a(\mu, p) d \mathcal{S}^a(p).
    \end{equation*}
\end{theorem}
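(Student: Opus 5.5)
The plan follows the proof of \cite[Theorem~11]{Mag30} and its refinement \cite[Theorem 5.7]{LecMag22}; the only change is to check that the covering and differentiation arguments use nothing beyond the metric structure of $\G$. Closed balls of a homogeneous distance form a family $\mathcal{F}_b$ to which the Vitali-type covering theorems of Federer \cite[2.8]{Federer69} apply. Indeed, $\G$ with a homogeneous distance is boundedly compact, and it is doubling because the Haar measure scales as $r^Q$ under $\delta_r$. Moreover, $\operatorname{diam}\B(x,r)=2r$ by homogeneity and the existence of geodesic-like scaling $\delta_t$, so the gauge $\widetilde\zeta_a$ equals $r^a$ on balls of radius $r$. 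I will first reduce to the case where $\mu(A)<\infty$ and $A$ is contained in an open set $U$ of finite $\mu$-measure, using hypothesis (1) and the countable additivity of both sides.

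The key step is the two-sided comparison: for every Borel $B\subset A$ and $t>0$,
\[
\{\theta^a(\mu,\cdot)\ge t\}\cap B \Rightarrow \mu(B)\ge t\,\mathcal{S}^a(B),\qquad \{\theta^a(\mu,\cdot)\le t\}\cap B \Rightarrow \mu(B)\le t\,\mathcal{S}^a(B).
\]
For the upper bound, I will cover $B$ by balls $\B\ni x$ of small diameter with $2^a\mu(\B)\le (t+\varepsilon)\operatorname{diam}(\B)^a$. The definition of $\theta^a$ via a $\sup$ over all balls containing $x$ gives exactly this at every point. Summing over a cover approximating $\mathcal{S}^a_\delta(B)$ and using regularity of $\mu$ yields $\mu(B)\le (t+\varepsilon)\mathcal{S}^a(B)$, after a Vitali-type refinement to control overlaps. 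Here it is essential that the balls need not be centered at $x$, which is why the spherical Federer density matches $\mathcal{S}^a$ and no constant is lost. For the lower bound, I will take the fine cover of $B$ by closed balls with $2^a\mu(\B)\ge (t-\varepsilon)\operatorname{diam}(\B)^a$. I will then apply the Vitali covering theorem for the outer measure $\mu$: it is finite on $U$ and the family is fine, and in a doubling space closed balls admit disjoint subfamilies covering $\mu$-almost everything. Summing over a disjoint subfamily inside an open $V\supset B$ with $\mu(V)$ close to $\mu(B)$ gives $\mathcal{S}^a$-estimates up to a $\mu$-null set. That $\mu$-null set is then made $\mathcal{S}^a$-null.

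I expect this last point to be the main obstacle: the lower bound controls $\mathcal{S}^a$ only off a $\mu$-null set. I will handle it by first proving $\theta^a(\mu,x)<\infty$ for $\mathcal{S}^a$-a.e. $x$ (the set where $\theta^a\ge t$ has $\mathcal{S}^a$-measure $\le\mu(U)/t$) and that $\mathcal{S}^a\mres A\ll\mu\mres A$ on $\{\theta^a>0\}$. On the remaining set $\{\theta^a=0\}$, hypothesis (2) gives $\sigma$-finiteness. Combined with the upper bound this yields $\mu(\{\theta^a=0\})=0$, and hence, by hypothesis (3), this set contributes nothing to either side. Borel measurability of $\theta^a(\mu,\cdot)$ comes from writing it as an infimum over rational $\varepsilon$ of upper semicontinuous functions, since a ball containing $x$ with diameter $<\varepsilon$ also contains nearby points. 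Finally, slicing $B$ into the level sets $\{t^k\le\theta^a<t^{k+1}\}$ for $t>1$, together with the sets where $\theta^a\in\{0,\infty\}$, applying the two comparisons, summing and letting $t\to1^+$ gives $\mu(B)=\int_B\theta^a(\mu,p)\,d\mathcal{S}^a(p)$.
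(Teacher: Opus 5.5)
Your overall architecture (an upper and a lower comparison between $\mu$ and $\mathcal{S}^a$ on level sets of $\theta^a(\mu,\cdot)$, then slicing) is the one behind \cite{Mag30} and \cite{LecMag22}, which the paper simply cites. However, the lower comparison, which is the heart of the theorem, is not established.

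First, you claim that in a doubling space closed balls admit disjoint subfamilies covering $\mu$-almost all of $B$. This is false for an arbitrary Borel regular $\mu$. A Vitali theorem relative to $\mu$ needs either a doubling-type condition on $\mu$ itself or a Besicovitch-type covering property, and neither is available here. The Besicovitch property fails for many homogeneous distances, e.g.\ the Kor\'anyi and Carnot--Carath\'eodory distances on the Heisenberg group. Moreover, your fine family consists of balls that merely contain $x$, not balls centered at $x$.

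Second, even granting such a cover, your treatment of the leftover $\mu$-null set is circular. The bound $\mathcal{S}^a(\{\theta^a\ge t\}\cap U)\le\mu(U)/t$ and the absolute continuity $\mathcal{S}^a\ll\mu$ on $\{\theta^a>0\}$ are precisely instances of the lower comparison you are trying to prove. Hypothesis (3) points the wrong way ($\mu\ll\mathcal{S}^a$), so it cannot turn a $\mu$-null set into an $\mathcal{S}^a$-null one.

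The missing idea is that no Vitali theorem with respect to $\mu$ is needed. It suffices to apply the purely metric enlargement lemma \cite[2.8.4--2.8.6]{Federer69} to the tail of a disjoint family.

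Let $B\subset\{\theta^a>s\}\cap V$ with $V$ open and $\mu(V)<\infty$. The closed balls $\B\subset V$ with $\operatorname{diam}\B<\delta$ and $\mu(\B)>s\,\zeta_a(\B)$ cover $B$ finely. Hence there is a countable disjoint subfamily $\{\B_j\}$ such that $B\setminus\bigcup_{j\le k}\B_j\subset\bigcup_{j>k}\hat{\B}_j$ for every $k$, where $\hat{\B}_j$ is the concentric ball with five times the radius.

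Since $\operatorname{diam}\B(x,\rho)=\rho\operatorname{diam}\B(0,1)$ in $\G$, you have $\zeta_a(\hat{\B}_j)=5^a\zeta_a(\B_j)$. Therefore $\phi_{5\delta,\widetilde\zeta_a}(B)\le\sum_{j\le k}\zeta_a(\B_j)+5^a\sum_{j>k}\zeta_a(\B_j)$, while disjointness gives $\sum_j\zeta_a(\B_j)\le\mu(V)/s<\infty$. Letting $k\to\infty$ and then $\delta\to0$ yields $s\,\mathcal{S}^a(B)\le\mu(V)$, with no constant lost and no exceptional set. Approximating the finite Borel measure $\mu\mres U$ from outside by open sets, with $U$ from (1), then gives $\mu(B)\ge s\,\mathcal{S}^a(B)$.

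From this you get $\mathcal{S}^a(\{\theta^a=\infty\}\cap A)=0$. It is here, not on $\{\theta^a=0\}$, that hypothesis (3) enters, giving $\mu(\{\theta^a=\infty\}\cap A)=0$; on $\{\theta^a=0\}$, hypothesis (2) and the upper comparison already suffice.

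Two minor corrections:
\begin{itemize}
\item The upper comparison needs no Vitali refinement. Countable subadditivity, plus regularity of $\mu$ along the increasing sets where the admissible scale is at least $1/k$, is enough.
\item The functions $x\mapsto\sup\{2^a\mu(\B)/\operatorname{diam}(\B)^a: x\in\B,\ \operatorname{diam}\B<\varepsilon\}$ are lower, not upper, semicontinuous. A closed ball through $x$ need not contain nearby points, but a slightly larger concentric ball does. This still yields Borel measurability of $\theta^a(\mu,\cdot)$.
\end{itemize}
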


\begin{definition}[Spherical factor]
    Let $V \subset \G$ be a linear subspace
    and let us consider a homogeneous distance $d$ on a homogeneous group $\G$. Denoting by $|\cdot|$ 
    our fixed graded scalar product on $\G$, 
    the \emph{spherical factor of $d$ with respect to $V$} is the number 
\begin{equation}\label{sfe}
    \beta_d(V) = \max_{d(u,0) \leq 1} \mathcal{H}_{| \cdot |}^n \big( \B(x,1) \cap V\big),
\end{equation}
where $\mathcal{H}_{|\cdot|}^n$ is given as in \eqref{hausmeas}.
\end{definition}

\section{Some general results for the upper blow-up in homogeneous groups}\label{sect:genres}

The present section contains two important tools to establish the upper blow-up of a submanifold in a homogeneous group. The first one is Theorem~\ref{t:specialcoord}, 
that we state using a specific system of graded coordinates $F:\R^{n}\to\G$, as in \cite[Corollary 3.8]{Mag13Vit}.
The second important tool is a suitable local expansion of the submanifold, that represents a sufficient condition to have the upper blow-up, see Theorem~\ref{genres}.

\begin{theorem}\label{t:specialcoord} Let $\G$ be a homogeneous group equipped
	with a graded scalar product and consider a $C^1$ smooth submanifold
	$\Sigma\subset\G$ containing the origin $0\in\G$ and of topological
	dimension $k$. Then there exist $\alpha_{1},\ldots,\alpha_{\iota}\in\N$
	with $\alpha_{j}\le n_{j}$ for all $j=1,\ldots,\iota$, an orthonormal
	graded basis $(Y_{1},\ldots,Y_{n})$ of $\Lie(\G)$, a bounded open
	neighborhood $U\subset\R^k$ of the origin and a $C^{1}$ smooth
	embedding $\Phi:U\to\Sigma$ with the following properties. Defining
	the analytic diffeomorphism $F:\R^{n}\to\G$, $F(y)=\exp(\sum_{j=1}^{n}y_{j}Y_{j})$,
	there holds $\Phi(0)=0\in\G$, for all $u\in U$ 
	\[
	F^{-1}\circ\Phi(u)=\sum_{j=1}^{n}\phi_{j}(u)e_{j},\quad\phi(u)=(\phi_{1}(u),\ldots,\phi_{n}(u))
	\]
	and the Jacobian matrix of $\phi$ at the origin is 
	\begin{equation}
		D\phi(0)=\left(\begin{array}{c|c|c|c|c|c}
			I_{\alpha_{1}} & 0 & \cdots & \cdots & \cdots & 0\\
			0 & \ast & \cdots & \cdots & \cdots & \ast\\
			\hline 0 & I_{\alpha_{2}} & 0 & \cdots & \cdots & 0\\
			0 & 0 & \ast & \cdots & \cdots & \ast\\
			\hline 0 & 0 & I_{\alpha_{3}} & 0 & \cdots & 0\\
			0 & 0 & 0 & \ast & \cdots & \ast\\
			\hline \vdots & \vdots & \vdots & \ddots & \ddots & \vdots\\
			\hline 0 & 0 & \cdots & \cdots & \cdots & I_{\alpha_{\iota}}\\
			0 & 0 & \cdots & \cdots & \cdots & 0
		\end{array}\right).\label{e:matriceC}
	\end{equation}
	The blocks containing the identity matrix $I_{\alpha_{j}}$ have $n_{j}$
	rows, for every $j=1,\ldots,\iota$. The blocks $\ast$ are $(n_{j}-\alpha_{j})\times\alpha_{i}$
	matrices, for all $j=1,\ldots,\iota-1$ and $i=j+1,\ldots,\iota$.
	The mapping $\phi$ can be assumed to have the special graph form
	given by the conditions 
	\[
	\phi_{s}(u)=u_{s-m_{j-1}+\mu_{j-1}}
	\]
	for every $s=\m_{j-1}+1,\ldots,\m_{j-1}+\alpha_{j}$ and $j=1,\ldots,\iota$,
	where we have defined 	\begin{equation}\label{def:mu_j}
	\mu_{0}=0\qquad\mu_{j}=\sum_{i=1}^{j}\alpha_{i}\quad\text{for \ensuremath{j=1,\ldots,\iota.}}
	\end{equation}
	Moreover, representing the vector fields $Y_{j}$ in $\R^{n}$ by
	the coordinates given by $F$, we get
	\begin{equation}\label{basepotenziata}
	\widetilde{Y}_{j}=(F^{-1})_{*}(Y_{j})=\der_{y_{j}}+\sum_{\ell=m_{d_{j}}+1}a_{j \ell}(x)\der_{y_{\ell}},
	\end{equation}
	where $a_{j \ell}$ are homogeneous polynomial of degree $d_{\ell}-d_{j}$ with respect to
	the dilations of the group.
\end{theorem}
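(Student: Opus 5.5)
Our plan is to build the adapted basis by linear algebra on $T_0\Sigma$, stratum by stratum, and then obtain $\Phi$ from the implicit function theorem. Let $\pi_j:\G\to H_j$ be the orthogonal projections with respect to the graded scalar product, and identify $T_0\Sigma$ with a $k$-dimensional subspace $W\subset\G$. We use the filtration $W_j=W\cap(H_j\oplus\cdots\oplus H_\iota)$ and set $\alpha_j=\dim\pi_j(W_j)=\dim W_j-\dim W_{j+1}$, so that $\alpha_j\le n_j$ and $\sum_j\alpha_j=k$. Inside each $H_j$ we choose an orthonormal basis $(e_{m_{j-1}+1},\dots,e_{m_j})$ whose first $\alpha_j$ vectors span $\pi_j(W_j)$. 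We then let $Y_i$ be the left invariant vector field with $Y_i(0)=e_i$. Since each $e_i$ lies in a single $H_j$ and orthonormal bases of each stratum are taken, $(Y_1,\dots,Y_n)$ is an orthonormal graded basis.

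Next we read off the Jacobian. The map $F$ is linear in the identification $\G\simeq H_1\oplus\cdots\oplus H_\iota$ given by the exponential, and $F^{-1}$ sends $\sum y_ie_i$ to $y$. Hence $D(F^{-1})(0)$ is the coordinate map of the basis $(e_i)$, and $T_0(F^{-1}(\Sigma))=W$ in these coordinates. For each $j$ and each $s\le\alpha_j$, the vector $e_{m_{j-1}+s}$ lies in $\pi_j(W_j)$, so we can pick $w_{j,s}\in W_j$ with $\pi_j(w_{j,s})=e_{m_{j-1}+s}$. These $k$ vectors form a basis of $W$ by a triangular argument along the filtration. In coordinates, $w_{j,s}$ has zero components in the strata $H_1,\dots,H_{j-1}$. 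In stratum $j$, its components are the unit vector in position $m_{j-1}+s$ and zeros elsewhere. Strata above $j$ carry arbitrary entries, and these entries can be reduced by subtracting combinations of the $w_{j',s'}$ with $j'>j$. Ordering the $w_{j,s}$ as columns gives exactly the block-triangular matrix \eqref{e:matriceC}, with identity blocks $I_{\alpha_j}$ and zero rows beneath them in the $j$-th column block.

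Consider now the projection $P:\R^n\to\R^k$ onto the coordinates $y_s$ with $s\in\{m_{j-1}+1,\dots,m_{j-1}+\alpha_j\}$, listed in increasing order. By construction $P|_W$ is an isomorphism, with matrix the identity in the basis $(w_{j,s})$. By the implicit function theorem, near $0$ the $C^1$ submanifold $F^{-1}(\Sigma)$ is a graph over an open set $U\subset\R^k$ in these coordinates. This gives a $C^1$ embedding $\phi:U\to\R^n$ satisfying $\phi_s(u)=u_{s-m_{j-1}+\mu_{j-1}}$ and $\phi(0)=0$. Its differential at $0$ is the graph differential of $W$, which is the matrix above. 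We then set $\Phi=F\circ\phi$.

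The last claim \eqref{basepotenziata} is standard and reduces to the BCH formula. In exponential coordinates of the first kind, $\widetilde Y_j(y)=\frac{d}{dt}\big|_{t=0}\,F^{-1}(F(y)\exp(tY_j))$. The BCH expansion gives $\partial_{y_j}$ plus bracket terms $c_m(y,te_j)$ with $m\ge2$, which are linear in $t$ at first order. Each bracket term lies in strata of degree strictly greater than $d_j$. By homogeneity under $\delta_r$, the coefficient $a_{j\ell}$ is a polynomial in $y$ satisfying $a_{j\ell}(\delta_r y)=r^{d_\ell-d_j}a_{j\ell}(y)$. The only delicate point in the whole argument is the bookkeeping showing that the chosen $w_{j,s}$ produce zeros in all the right places. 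This amounts to a Gaussian elimination on the graded coordinates, which we expect to be routine.
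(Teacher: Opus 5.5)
Your filtration is inverted, so the bookkeeping step you call routine fails. With $W_j=W\cap(H_j\oplus\cdots\oplus H_\iota)$, each $w_{j,s}$ vanishes in the strata \emph{below} $j$. Subtracting combinations of $w_{j',s'}$ with $j'>j$ only clears the pivot coordinates of the higher strata. The part of $\pi_{j'}(w_{j,s})$ orthogonal to $\pi_{j'}(W_{j'})$ cannot be removed. So your $D\phi(0)$ has free entries below the identity blocks and zeros above them. \eqref{e:matriceC} demands the opposite: the $i$-th column block must vanish in every stratum $j>i$, and the $\ast$ entries sit in the non-pivot rows of the strata $j<i$.

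The two patterns are not interchangeable. In the Engel group take $W=\spn\{e_1,e_2+e_3\}$. Then $W_2=\{0\}$, so your recipe gives $\alpha=(2,0,0)$. But \eqref{e:matriceC} with $\alpha=(2,0,0)$ forces both columns into $H_1$, i.e. $W\subset H_1$, which is false.

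Conceptually, in your pattern $\sum_j j\alpha_j$ is the \emph{lowest} degree of a nonzero component of the tangent $k$-vector. In \eqref{e:matriceC}, each column of block $i$ lies in $H_1\oplus\cdots\oplus H_i$ and the pivot minor is the identity, which forces $\sum_j j\alpha_j=d_\Sigma(0)$. The paper relies on exactly this later, e.g.\ $\alpha_1=\alpha_2=1$ at points of degree $3$.

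The repair is to use the increasing filtration $W^j=W\cap(H_1\oplus\cdots\oplus H_j)$:
\begin{itemize}
\item set $\alpha_j=\dim W^j-\dim W^{j-1}=\dim\pi_j(W^j)$;
\item choose an orthonormal basis of $H_j$ whose first $\alpha_j$ vectors span $\pi_j(W^j)$;
\item lift to $v_{j,s}\in W^j$ with $\pi_j(v_{j,s})=e_{m_{j-1}+s}$.
\end{itemize}
These lifts already vanish in all strata above $j$. Now subtract suitable combinations of the $v_{j-1,s'}$, then of the $v_{j-2,s'}$, and so on down to stratum $1$. This kills the pivot coordinates in the lower strata without touching higher ones, since each $v_{i,s'}$ vanishes above stratum $i$ and has a pure pivot unit vector in stratum $i$. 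The result is exactly \eqref{e:matriceC}; in the example it gives $\alpha=(1,1,0)$ with pivot coordinates $y_1,y_3$. This linear-algebra fact is what the paper imports from \cite[Lemma~3.1]{Mag13Vit}.

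With this correction, the rest of your argument is sound and parallels the paper. Your graph representation over the pivot coordinates is the paper's inverse-function step applied to $\pi\circ\widetilde\psi$, and $D\phi(0)$ is the matrix of the reduced basis. Your BCH computation of $\widetilde Y_j$ is the argument behind formula (2.42) of \cite{MagPhD}.
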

\begin{proof}
	We consider any $C^{1}$ smooth parametrization $\Psi:U'\to\Sigma$,
	that is a diffeomorphism onto $\Psi(U')=\Sigma\cap V$, where $V\subset\G$
	is an open set containing $0$. Due to \cite[Lemma 3.1]{Mag13Vit}
	there exists a matrix $C=(C_{ij})_{i=1,\ldots,n,j=1,\ldots,k}$
	having the form \eqref{e:matriceC} and a graded basis $(Y_{1},\ldots,Y_{n})$
	of $\Lie(\G)$ such that 
	\[
	\sum_{i=1}^{n}C_{ij}Y_{i}(e)=\sum_{s=1}^{k}\sigma_{sj}\der_{y_{s}}\Psi(0)
	\]
	for the unique coefficients $\sigma_{ij}\in\R$, where $j=1,\ldots,k$
	and $e\in\G$ is the unit element. We write $\Psi=F\circ\psi$, where
	$\psi:U'\to\R^{n}$, therefore observing that
	\[
	\sum_{i=1}^{n}C_{ij}Y_{i}(e)=\sum_{s=1}^{k}\sum_{i=1}^{n}\sigma_{sj}dF(0)(e_{i})(\der_{y_{s}}\psi^{i}(0))=\sum_{i=1}^{n}\left(\sum_{s=1}^{k}(\der_{y_{s}}\psi^{i}(0))\sigma_{sj}\right)Y_{i}(e),
	\]
	where $(e_{i})$ is the canonical basis of $\R^{n}$. Introducing
	the $k\times k$ invertible matrix $\mathcal{C}=(\sigma_{sj})$, we have proved
	the condition 
	\[
	D\psi(0)\,\mathcal{C}=C.
	\]
	Now we set $\widetilde{\psi}(y)=\psi(\mathcal{C}(y))$, observing that clearly
	\[
	D\widetilde{\psi}(0)=D\psi(0)\mathcal{C}=C.
	\]
	Let us define the projection 
	\[
\pi:\R^{n}\to\R^{k},\quad\pi\left(\sum_{j=1}^{n}y_{j}e_{j}\right)=\sum_{i=1}^{\iota}\sum_{s=m_{i-1}+1}^{m_{i-1}+\alpha_{i}}y_{s}E_{s-m_{i-1}+\mu_{i-1}},
	\]
	where $(E_{i})$ is the canonical basis of $\R^k$. Then $D(\pi\circ\widetilde{\psi})(0)$
	is the identity matrix and the inverse mapping theorem proves that $\pi\circ\widetilde{\psi}$ is a local diffeomorphism on a neighborhood
	of $0$. Setting the new variables $z=\pi\circ\widetilde{\psi}$,
	by construction  $\phi=\widetilde{\psi}\circ(\pi\circ\widetilde{\psi})^{-1}$
	has a graph form, and then the mapping $\Phi(y)=F\circ\phi(y)$
	satisfies all the assumptions of our claim. Finally, due to formula
	(2.42) in \cite{MagPhD} the proof is complete.
\end{proof}

We notice that \cite[Corollary~3.8]{Mag13Vit} is included in the previous theorem and both of these results follow from \cite[Lemma 3.1]{Mag13Vit}. The lemma provides a special adapted frame of vector fields on a submanifold, without referring to some system of coordinates. 
Theorem~\ref{t:specialcoord} can be also seen as a version of \cite[Theorem~3.1]{Magnani2019Area}, where the homogeneous group is identified with a vector space equipped with
the Lie product and the group operation given by the BCH formula.

The next theorem is a crucial result and also a general tool for the upper blow-up, providing some general sufficient conditions.
Its proof arises from the remark that the proof of \cite[Theorem~1.2]{Magnani2019Area} actually only relies on the local expansion \eqref{localexp}. 

\begin{theorem}[Upper blow-up]\label{genres}
Let $\Sigma \subset \mathbb{G}$ be a smooth $k$-dimensional submanifold of class $C^1$ and 
degree $\NN$. Let $p \in \Sigma$ be a point of maximum degree $\NN$. 
 For the translated submanifold
$
\Sigma_p = p^{-1}\Sigma,
$
we introduce the $C^1$ smooth homeomorphism $\eta : \R^k \rightarrow \R^k$ by
\begin{equation}\label{funzioneeta}
\eta(t) = \Big(\frac{\lvert t_1 \rvert^{b_1}}{b_1}{\rm sgn}(t_1), \ldots, \frac{\lvert t_p \rvert^{b_k}}{b_k}{\rm sgn}(t_k)\Big)
\end{equation}
where $b_i$ is the induced degree, defined as follows
\begin{equation}\label{inducedb}
    b_i = j \quad \mbox{if and only if } \mu_{j - 1} < i \leq \mu_j
\end{equation}
for every $i=1 \ldots,k$,
where $\mu_j$ are defined in \eqref{def:mu_j}.
If $\phi$ denotes the mapping of Theorem~\ref{t:specialcoord}
applied to 
$\Sigma_p$, we define the $C^1$ smooth mapping
\begin{equation}\label{funzionegamma}
  \Gamma = \phi \circ \eta  
\end{equation}
and we set the subset of indexes $I \subset \{1,\ldots, n\}$ such that
\begin{equation*}
        A_0 \Sigma = { \rm span} \{ 
        e_1, \ldots, e_{\alpha_1}, e_{m_1 + 1}, \ldots, e_{m_1 + \alpha_2}, \ldots, e_{m_{\iota- 1} + 1}, \ldots, e_{m_{\iota -1} + \alpha_\iota}
        \}.
\end{equation*} 
Using notation and definitions of \cite[Theorem~3.1]{Magnani2019Area}, 
	if we assume the validity of the local expansion   
	\begin{equation}\label{localexp}
		\Gamma_s(t) = 
		\begin{cases}
			\frac{\lvert t_{s - m_{d_s - 1} + \mu_{d_s - 1}}\rvert^{d_s}}{d_s} {\rm sgn}(t_{s - m_{d_s - 1} + \mu_{d_s - 1}}) \;\; \mbox{if } s \in I \\
			o(\lvert t \rvert^{d_s}) \quad \quad \mbox{if } s \not \in I
		\end{cases},
	\end{equation}
	and $\beta_d \big(A_p \Sigma \big)$ denotes the \emph{spherical factor} associated to the homogeneous tangent space $A_p \Sigma$ in \eqref{sfe}, then
	\begin{equation}\label{eq:betadens}
	\theta^N(\mu_\Sigma, p) = \beta_d \big(A_p \Sigma \big).
	\end{equation}
\end{theorem}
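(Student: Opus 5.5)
By left invariance of $\mu_\Sigma$ under translation (the intrinsic measure is built from left invariant objects, so $\mu_{\Sigma_p}=\ell_{p^{-1}\#}\mu_\Sigma$) and of the homogeneous distance, it suffices to prove $\theta^N(\mu_{\Sigma_p},0)=\beta_d(A_0\Sigma_p)$. We work in the coordinates of Theorem~\ref{t:specialcoord}. We follow the scheme of \cite[Theorem~1.2]{Magnani2019Area} and isolate the one place where the local expansion \eqref{localexp} enters.

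\textbf{Step 1: rescaling the measure of balls.} Fix a closed ball $\B=\B(x,r)$ containing $0$, with $r$ small. Using the parametrization $\Gamma=\phi\circ\eta$ and the area formula for $\mu_\Sigma$, write
\[
\mu_{\Sigma_p}(\B)=\int_{\Gamma^{-1}(\B)} |\tau_{\Sigma,N}|_g(\Gamma(t))\,J\Gamma(t)\,dt.
\]
Change variables $t=r\,\sigma$. Because $\eta$ is anisotropic with exponents $b_i$, the Jacobian of $\eta$ combined with the degree-$N$ component of $D\phi$ gives a factor $r^{N}$; indeed $\sum_i b_i=N$ at a point of maximum degree. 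Hence
\[
\frac{\mu_{\Sigma_p}(\B(x,r))}{r^N}=\int_{\{\sigma:\ \delta_{1/r}\Gamma(r\sigma)\in \B(\delta_{1/r}x,1)\}} h_r(\sigma)\,d\sigma .
\]
Here $h_r(\sigma)\to h_0(\sigma)$ locally uniformly, by continuity of $D\phi$ and of $|\tau_{\Sigma,N}|_g$. The function $h_0$ is exactly the density that turns the limit integral into $\mathcal{H}^N_{|\cdot|}$ restricted to $A_0\Sigma$, as in \cite{Magnani2019Area}.

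\textbf{Step 2: convergence of the rescaled surface.} This is the key point, and the only place where \eqref{localexp} is used. For $s\in I$ the coordinate $\Gamma_s$ is exactly homogeneous of degree $d_s$. For $s\notin I$ we have $r^{-d_s}\Gamma_s(r\sigma)=o(1)$ uniformly on compact sets. Therefore $\delta_{1/r}\Gamma(r\sigma)\to\Gamma_0(\sigma)$ locally uniformly, where $\Gamma_0$ is a homeomorphism onto $A_0\Sigma$. Since $F$ is compatible with dilations in graded coordinates, this is convergence in $\G$.

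\textbf{Step 3: upper and lower bounds.} Extract $x_r':=\delta_{1/r}x$ with $d(x_r',0)\le1$ and pass to a subsequence $x_r'\to y$. Use that $\mathcal{H}^N\big(\partial \B(y,1)\cap A_0\Sigma\big)=0$, or alternatively an upper semicontinuity argument on compact sets, to get
\[
\limsup \frac{\mu(\B)}{r^N}\le \mathcal{H}^N_{|\cdot|}\big(\B(y,1)\cap A_0\Sigma\big)\le \beta_d(A_0\Sigma).
\]
For the reverse inequality, choose $x=\delta_r y^*$ with $y^*$ a maximizer in \eqref{sfe}, and apply Fatou's lemma on the interior. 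Together these give $\theta^N=\beta_d$, with the factor $2^N/\mathrm{diam}^N$ matched by the convention in \eqref{Federerdens}.

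\textbf{Main obstacle.} The delicate part is uniform control in Step~2 and Step~3. We need a priori boundedness of $\Gamma^{-1}(\B)/r$, that is, that the preimages of small balls lie in a compact set of $\sigma$. This follows from the graph form: the coordinates $\Gamma_s$ with $s\in I$ alone already control $|\sigma|$ via the homogeneous norm. We also need to handle the boundary of balls. For the latter, we would rely on the convexity-type fact used in \cite{Magnani2019Area}: $\mathcal{H}^N(\partial\B\cap A_0\Sigma)=0$ is not needed if we instead approximate with balls of radius $1\pm\varepsilon$.
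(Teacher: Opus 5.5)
Your proposal is correct and follows essentially the paper's route: the paper's proof consists only of the observation that the argument of \cite[Theorem~1.2]{Magnani2019Area} uses nothing beyond \eqref{localexp}, and you have written that argument out (the factor $r^N$ from rescaling through $\eta$, locally uniform convergence of the dilated parametrization onto $A_0\Sigma$, compactness of the rescaled preimages obtained from the coordinates in $I$, upper semicontinuity for closed balls, and a $1+\varepsilon$ enlargement for the lower bound). Two small corrections: the limit measure on the $k$-dimensional space $A_0\Sigma$ is $\mathcal{H}^k_{|\cdot|}$, not $\mathcal{H}^N_{|\cdot|}$; and $\mathcal{H}^k(\partial\B(y,1)\cap A_0\Sigma)=0$ need not hold for a general homogeneous distance, so you should rely on the semicontinuity and $1\pm\varepsilon$ alternatives you mention.
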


\section{The Engel group with respect to a general graded basis}\label{generallambda}

In this section, we introduce the Engel group, providing the form of a general graded basis for its Lie algebra. Then we compute the associated vector fields, the dual basis of left invariant 1-forms and the structure of tangent 2-vector fields to surfaces with a special parametrization, see \eqref{2vector}.
 
Up to Lie group isomorphisms, the Engel group is the unique connected and simply connected Lie group $\E$ whose Lie algebra $\cE$ admits a basis $(X_1,X_2,X_3,X_4)$, where the only nontrivial bracket relations
are 
\[
[X_1,X_2]=X_3\quad \text{and}\quad [X_1,X_3]=X_4.
\]
We have a grading $\cE=\Lie(\E)=V_1\oplus V_2\oplus V_3\oplus V_4$ such that $V_1=\spn\{X_1,X_2\}$, $V_2=\spn\{X_3\}$, and
$V_3=\spn\{X_4\}$. 

\begin{proposition}[Change of coefficients in the parametrization]\label{formulasvector}
We consider a local parametrization $\Phi : U \rightarrow \G$ of a $k$-dimensional submanifold $\Sigma$, where $U\subset\R^k$ is open. We fix a graded basis 
$(Y_1,\ldots,Y_n)$ of ${\rm Lie}(\G)$ and consider the associated system of graded coordinates
$F(y)=\exp (\sum_{j=1}^n y_jY_j)$, hence we set $\phi=F^{-1}\circ \Phi:U\to\R^n$. 
We observe that there exist coefficients $a_{jl}$, that are homogeneous polynomials (with respect to dilations) such that 
\[
\widetilde{Y}_j(y)=\sum_{l=1}^n a_{jl}(y)\,\partial_{y_l}, \quad \mbox{for } j=1,\ldots,n,
\]
where $\widetilde{Y}_j = (F^{-1})_* (Y_j)$,  see for instance \cite[(2.42)]{MagPhD}.
We define the matrix $A=(a_{ij})$ and $c_i^l$ such that 
$$
\partial_{y_i} \Phi = \sum_{l=1}^n c_i^l(\phi)\,Y_l(\Phi) \quad \mbox{for } i=1,\ldots,k,
$$
or equivalently $\partial_{y_i} \phi = \sum_{l=1}^n c_i^l(\phi)\,\widetilde Y_l(\Phi)$ for 
$i=1,\ldots,k$. Then we have the formula
\begin{equation}\label{coefficientspartial}
c_j^i(\phi)=\sum_{l=1}^n((A(\phi)^T)^{-1})_{il}\,\der_{y_j}\phi^l.
\end{equation}
\end{proposition}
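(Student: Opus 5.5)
The formula is pure linear algebra in the fibre $T_{\Phi(u)}\G$, so I will prove it pointwise, fixing $u\in U$ and writing $p=\Phi(u)$. Since $F^{-1}\circ\Phi=\phi$, the chain rule gives $\partial_{y_j}\Phi(u)=dF(\phi(u))\big(\partial_{y_j}\phi(u)\big)$. Transporting through the diffeomorphism $F$, the defining relation $\partial_{y_j}\Phi=\sum_i c_j^i(\phi)\,Y_i(\Phi)$ becomes
\[
\partial_{y_j}\phi(u)=\sum_{i=1}^n c_j^i(\phi(u))\,\widetilde Y_i(\phi(u)),
\]
where $\widetilde Y_i=(F^{-1})_*Y_i$ and the vectors are read in the coordinates of $\R^n$. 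This is the equivalent form stated in the proposition, with $\widetilde Y_l$ evaluated at $\phi$.

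Next I substitute the coordinate expression $\widetilde Y_i(y)=\sum_{l} a_{il}(y)\partial_{y_l}$. Comparing the $\partial_{y_l}$-components on both sides gives, for each $l=1,\ldots,n$,
\[
\partial_{y_j}\phi^l=\sum_{i=1}^n c_j^i(\phi)\,a_{il}(\phi)=\big(A(\phi)^T c_j(\phi)\big)_l ,
\]
where $c_j=(c_j^1,\ldots,c_j^n)^T$. In vector form this reads $A(\phi)^T c_j(\phi)=\partial_{y_j}\phi$.

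To invert, I need $A(y)$ to be invertible for every $y$. By \eqref{basepotenziata}, $a_{jj}=1$ and $a_{j\ell}=0$ whenever $\ell\neq j$ and $d_\ell\le d_j$; the only other nonzero entries have $\ell>m_{d_j}$. With the graded ordering, $A$ is therefore upper unitriangular and $\det A\equiv1$. Alternatively, invertibility follows because $(\widetilde Y_1,\ldots,\widetilde Y_n)$ is the pushforward of a frame, hence a frame at every point.

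Then $c_j(\phi)=(A(\phi)^T)^{-1}\partial_{y_j}\phi$, and taking the $i$-th component gives \eqref{coefficientspartial}. The only point needing care is the index convention: the rows of $A$ index the vector fields and the columns index the coordinates. This is exactly why the transpose appears. Once that convention is fixed, there is no real obstacle.
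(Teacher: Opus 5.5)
Your proof is correct and follows the paper's argument. Both go through the chain rule via $F^{-1}$, substitute $\widetilde Y_l=\sum_k a_{lk}\partial_{y_k}$, compare coefficients to obtain $\partial_{y_j}\phi=A(\phi)^T c_j(\phi)$, and invert $A(\phi)^T$. Your explicit check that $A$ is upper unitriangular by \eqref{basepotenziata}, hence invertible, is a useful addition, since the paper inverts $A^T(\phi)$ without comment.
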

For the sake of completeness, we add the proof of this proposition, although it is a straightforward verification.
\begin{proof} 
	We simply write down our definitions
	\begin{align*}
		\partial_{y_j} \phi &= \partial_{y_j}(F^{-1} \circ \Phi) =d F^{-1}(\Phi) (\partial_{y_j} \Phi) \\
		& =d F^{-1}(\Phi) \Big(\sum_{l=1}^n c_j^l(\phi) Y_l(\Phi)\Big) = \sum_{l=1}^n c_j^l(\phi) d F^{-1}(\Phi) ( Y_l(\Phi)) \\
		&= \sum_{l=1}^n c_j^l(\phi) \widetilde{Y}_l(F^{-1} \circ \Phi) 
		=\sum_{l=1}^n c_j^l(\phi) \widetilde{Y}_l(\phi)
		=\sum_{l=1}^n c_j^l(\phi) \sum_{k=1}^n\, a_{lk}(\phi)\partial_{y_k} \\
		&= \sum_{k=1}^n\sum_{l=1}^n(A^T(\phi))_{kl}   c_j^l(\phi)  \partial_{y_k}
		= \sum_{k=1}^n (\partial_{y_j} \phi^k) \partial_{y_k}.
	\end{align*}
	Thus we can infer that
	$$
	\partial_{y_j} \phi^k = \sum_{l=1}^n(A^T(\phi))_{kl}   c_j^l(\phi)
	$$
	hence taking the inverse matrix of $A^T(\phi)$, our claim \eqref{coefficientspartial} follows.   
\end{proof}

Let us consider any graded basis $(Y_1,Y_2,Y_3,Y_4)$ of $\cE$, hence the grading
of $\cE$ implies the existence of real numbers $\xi_{12}, \xi_{13},\xi_{23}$ 
such that 
\begin{equation}\label{structurecoeff}
[Y_1, Y_2] = \xi_{12} Y_3,  \quad [Y_1, Y_3] =\xi_{13} Y_4, \quad [Y_2, Y_3] = \xi_{23}Y_4.
\end{equation} 
The stratification of $\cE$ gives $\xi_{12} \neq 0$ and $(\xi_{13}, \xi_{23}) \neq (0,0)$.

We may also introduce the coordinate system generated by the exponential map
\begin{equation}\label{analyticdiffeo}
F : \R^4 \rightarrow \G, \quad F(y) = \exp \Big(\sum_{j=1}^4 y_j Y_j\Big)
\end{equation}
and define the preimages 
\[
\widetilde Y_j=(F^{-1})_*(Y_j).
\]
Arguing as in the proof of \cite[(2.42)]{MagPhD},
we take into account \eqref{structurecoeff} and differentiate the left translation associated with the BCH formula, obtaining an explicit formula for $(\widetilde Y_1,\widetilde Y_2,\widetilde Y_3, \widetilde Y_4)$, that verify the conditions of \eqref{structurecoeff} and then
generate a Lie algebra isomorphic to $\cE$. Thus, for this general form of a graded basis of $\cE$, we get
\begin{align}\label{basismain}
\notag \widetilde{Y}_1 &= \partial_{y_1} - \frac{y_2}{2} \xi_{12}\partial_{y_3} - \Big(\frac{y_3}{2} \xi_{13} +\frac{1}{12}\xi_{12}\xi_{13}y_1 y_2 + \frac{1}{12} \xi_{12}\xi_{23}y_2^2\Big) \partial_{y_4}, \\
\widetilde{Y}_2 &= \partial_{y_2} + \frac{y_1}{2}\xi_{12}\partial_{y_3}- \Big(\frac{y_3}{2}\xi_{23} -\frac{1}{12}\xi_{12}\xi_{23}y_2 y_1 - \frac{1}{12}\xi_{12}\xi_{13}y_1^2  \Big)\partial_{y_4}, \\ \notag
\widetilde{Y}_3 &= \partial_{y_3} +\frac12\big(y_1\xi_{13}+y_2\xi_{23}\big)\partial_{y_4}, \\ 
\notag \widetilde{Y}_4 &= \partial_{y_4}.
\end{align}
We also need to write the left invariant dual basis $(\theta_1, \ldots, \theta_4)$ of $(\widetilde{Y}_1, \ldots, \widetilde{Y}_4)$, obtaining
\begin{align}\label{dualbasisgeneral}
    &\theta_1=dy_1, \quad \theta_2=dy_2, \quad \theta_3 = d y_3 -  \xi_{12}\frac{y_1}{2} dy_2 + \xi_{12}\frac{y_2}{2} dy_1, 
    \end{align}
    and finally 
    $$ \theta_4=dy_4 - \frac12\big(y_1\xi_{13}+ y_2\xi_{23}\big) dy_3 + \big( \frac16 \xi_{12}y_1(\xi_{23}  y_2 + \xi_{13}y_1)  + \frac{y_3}{2}\xi_{23}\big) dy_2 + \big(\frac{y_3}{2} \xi_{13} - \frac16\xi_{12}y_2(\xi_{13} y_1  + \xi_{23}y_2)\big) dy_1.
    $$    
Combining \eqref{basismain} and Proposition~\ref{formulasvector},
setting $u=(u_1, u_2)\in U$, we can explicitly compute 
\begin{align}\label{generalvect}
\notag \der_{u_i}\phi & 
\notag = \der_{u_i} \phi_1 \widetilde{Y}_1 + \der_{u_i} \phi_2 \widetilde{Y}_2 + \Big(\der_{u_i} \phi_3 - \frac{\xi_{12}}{2}\phi_1 \der_{u_i} \phi_2  + \frac{\xi_{12}}{2}\phi_2 \partial_{u_i} \phi_1\Big) \widetilde{Y}_3 + \\
    &
    \Bigg(\der_{u_i} \phi_4 - \frac12(\xi_{13}\phi_1+\xi_{23}\phi_2) \der_{u_i} \phi_3  + \Big(\frac{1}{6}\xi_{12}(\xi_{23}\phi_1 \phi_2 + \xi_{13} \phi_1^2)
    + \frac{\xi_{23}}{2}\phi_3\Big)\der_{u_i} \phi_2 \\
  \notag & 
+\Big(\frac{\xi_{13}}{2}\phi_3 -\frac{1}{6}\xi_{12}(\xi_{13}\phi_1 \phi_2 + \xi_{23}\phi_2^2)  \Big)\der_{u_i}\phi_1\Bigg) \widetilde{Y}_4.
\end{align}
where the vector fields $\widetilde{Y}_j$ are evaluated at $\phi(u)$.

Let us consider a $2$-vector associated to
the differential of $\phi$ with respect to $\widetilde{Y}_i$:
\begin{align}\label{2vector}
\notag	\der_{u_1}\phi \wedge \der_{u_2}\phi &= \phi_u^{12}\widetilde{Y}_1 \wedge \widetilde{Y}_2 + \big(\phi_u^{13} - \frac{\xi_{12}}{2}\phi_1  \phi_u^{12}\big) \widetilde{Y}_1 \wedge \widetilde{Y}_3+ (\phi_u^{23} - \frac{\xi_{12}}{2}\phi_2 \phi^{12}_u)\widetilde{Y}_2 \wedge \widetilde{Y}_3  \\ \notag
	&+\Big(\phi_u^{14}  - \frac12(\xi_{13}\phi_1+\xi_{23}\phi_2) \phi_u^{13} + \Big(\frac{1}{6}\xi_{12}(\xi_{23}\phi_1 \phi_2 + \xi_{13} \phi_1^2)
    + \frac{\xi_{23}}{2}\phi_3\Big)\phi_u^{12} \Big) \widetilde{Y}_1 \wedge \widetilde{Y}_4 \\
    &+
	\Bigg(\phi_u^{24} -\frac12(\xi_{13}\phi_1+\xi_{23}\phi_2) \phi_u^{23} + \big(\frac{1}{6}\xi_{12}(\xi_{13}\phi_1 \phi_2 + \xi_{23}\phi_2^2) - \frac{\xi_{13}}{2}\phi_3 \big)\phi^{12}_u \Bigg) \widetilde{Y}_2 \wedge \widetilde{Y}_4 \\ \notag
	&\notag+\Bigg(\phi_u^{34}+\Big(
 \frac{1}{12}\xi_{12}(\xi_{23}\phi_1 \phi_2 + \xi_{13} \phi_1^2)
    - \frac{\xi_{23}}{2}\phi_3\Big) \phi_u^{23} - \phi_1 \frac{\xi_{12}}{2}\phi^{24}_u + \phi_2 \frac{\xi_{12}}{2}\phi^{14}_u  \\ \notag
	&- \Big(\frac{1}{12}\xi_{12}(\xi_{13}\phi_1 \phi_2 + \xi_{23}\phi_2^2) + \frac{\xi_{13}}{2}\phi_3 
 \Big) \phi^{13}_u  +\Big( \frac{\xi_{12} \xi_{13}}{4} \phi_1 \phi_3 + \frac{\xi_{12} \xi_{23}}{4}\phi_2 \phi_3\Big) \phi^{12}_u \Bigg) \widetilde{Y}_3 \wedge \widetilde{Y}_4,
\end{align}
where we set
$$
\phi_u^{ij} = \det
\begin{pmatrix}
	\partial_{u_1}\phi_i & \partial_{u_2}\phi_i \\
	\partial_{u_1}\phi_j & \partial_{u_2}\phi_j
\end{pmatrix}.
$$

\section{Non-existence of horizontal surfaces in the Engel group}\label{sect:nonexistence}

In order to show the nonexistence of horizontal surfaces in the Engel group, we can fix any graded basis of its Lie algebra. We consider
\begin{align}\label{vectorfieldss}
\notag X_1 &= \partial_{x_1} - \frac{x_2}{2} \partial_{x_3} - \big( \frac{x_3}{2} + \frac{x_1 x_2}{12}\big) \partial_{x_4}, \\
X_2 &= \partial_{x_2} + \frac{x_1}{2} \partial_{x_3} + \frac{x_1^2}{12} \partial_{x_4},\\
\notag X_3 &= \partial_{x_3} + \frac{x_1}{2} \partial_{x_4},\\ \notag\quad X_4 &= \partial_{x_4},
\end{align}
that follows from \eqref{basismain} with $\xi_{12} = \xi_{13} = 1$ and $\xi_{23} = 0$.

The elements of the dual basis $\theta_1$, $\theta_2$, $\theta_3$ and $\theta_4$
are defined by
\begin{equation}\label{dualbasis}
    d x_1, \quad dx_2, \quad dx_3 -\frac{x_1}{2} dx_2 + \frac{x_2}{2} dx_1, \quad dx_4 -\frac{x_1}{2}dx_3 + \frac{x_1^2}{6}dx_2 + \Big(\frac{x_3}{2} - \frac{x_1 x_2}{6}\Big) dx_1,
\end{equation}
respectively, and we have
\[
    d \theta_3 = - dx_1 \wedge d x_2,  \quad d \theta_4 = - d x_1 \wedge d x_3 + \frac{1}{2} x_1 d x_1 \wedge d x_2.
\]
Following the approach of \cite{Mag2010pams}, in the next result we prove the nonexistence
of horizontal $C^1$ surfaces in the Engel group.

\begin{theorem}\label{horizontalnon}
There does not exist any $C^1$ smooth surface of degree $2$ in the Engel group.
\end{theorem}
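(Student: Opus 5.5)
The plan is to argue by contradiction, following \cite{Mag2010pams}. Assume that $\Sigma\subset\E$ is a $C^1$ surface of degree $2$. First I would check that every point has degree exactly $2$. Any nonzero $2$-vector has degree at least $d_1+d_2=2$, so $d_\Sigma(p)\ge 2$. The only multi-index $I\in I_{2,4}$ with $\deg(X_I)=2$ is $I=(1,2)$. Hence $d_\Sigma(p)=2$ forces $\tau_\Sigma(p)$ to be a nonzero multiple of $X_1(p)\wedge X_2(p)$, which means $T_p\Sigma=\spn\{X_1(p),X_2(p)\}$ for every $p\in\Sigma$. Dually, the forms $\theta_3$ and $\theta_4$ of \eqref{dualbasis} vanish on $T_p\Sigma$. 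Only $\theta_3$ will be needed.

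Next I fix a $C^1$ parametrization $\Phi:U\to\Sigma$, with $U\subset\R^2$ open, and work in the coordinates $x$ of \eqref{vectorfieldss}. Since $\theta_3$ annihilates every tangent vector, the pull-back $\Phi^*\theta_3$ is a continuous $1$-form on $U$ that vanishes identically. Since $\theta_1=dx_1$ and $\theta_2=dx_2$ restrict to a coframe of $T_p\Sigma$, the continuous function $h$ defined by $\Phi^*(dx_1\wedge dx_2)=h\,du_1\wedge du_2$ never vanishes on $U$.

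The key step is Stokes' theorem, which must hold with only $C^1$ regularity of $\Phi$. Let $D\Subset U$ be a closed disc and apply the formula $d\theta_3=-dx_1\wedge dx_2$. I claim that
\[
\int_{\partial D}\Phi^*\theta_3=\int_D \Phi^*(d\theta_3)=-\int_D h\,du_1\wedge du_2 .
\]
For smooth maps this is the classical Stokes theorem. For the $C^1$ map $\Phi$, I would mollify to get $\Phi_\varepsilon\to\Phi$ in $C^1$ on a neighborhood of $D$. Since $\theta_3$ and $d\theta_3$ are smooth with smooth coefficients, $\Phi_\varepsilon^*\theta_3\to\Phi^*\theta_3$ and $\Phi_\varepsilon^*d\theta_3\to\Phi^*d\theta_3$ uniformly, because these pull-backs involve only first derivatives. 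Passing to the limit in the identity gives the claim. This approximation is the step I expect to need care, since the naive identity $d(\Phi^*\theta_3)=\Phi^*d\theta_3$ is not available without second derivatives.

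The left-hand side vanishes because $\Phi^*\theta_3\equiv0$. Therefore $\int_D h=0$ for every disc $D\Subset U$. Since $h$ is continuous, it follows that $h\equiv 0$, which contradicts $h\neq0$. Hence no $C^1$ surface of degree $2$ exists in $\E$.
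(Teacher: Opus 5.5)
Your proof is correct and takes essentially the paper's route. The identity $d\theta_3=-dx_1\wedge dx_2$, combined with $\Phi^*\theta_3\equiv 0$, forces the Jacobian of $(\phi_1,\phi_2)$ to vanish; the paper imports this step from \cite[Lemma~2]{Mag2010pams}, while you prove it directly with the mollified Stokes argument, which is valid for $C^1$ maps. The final contradiction also differs: you read off $h\neq 0$ from $T_p\Sigma=\spn\{X_1(p),X_2(p)\}$, whereas the paper uses both contact equations (including the one from $\theta_4$) to show $\nabla\phi$ has rank less than two, so your version is slightly more economical and never needs $\theta_4$.
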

\begin{proof}
By contradiction, we assume that we have a $C^1$ smooth surface $\Sigma \subset \E$ of degree $2$.
   Let $\Om \subset \mathbb{R}^2$ be an open set, let $\Phi=\exp(\sum_{j=1}^4 \phi_jX_j) \in C^{1}(\Om, \Sigma)$
   be a local parametrization of $\Sigma$ and define $\phi=(\phi_1,\phi_2,\phi_3,\phi_4)\in C^1(\Omega,\R^4)$. The assumption that $\Sigma$ has degree $2$ implies that $\phi^*(\theta_3) = \phi^*(\theta_4)=0$, 
that is
\begin{equation}\label{systems}
d (2\phi_3) = \phi_1 d \phi_2 - \phi_2 d \phi_1   \quad \mbox{and} \quad d \phi_4 = \big(\frac{\phi_1 \phi_2}{6} -\frac{\phi_3}{2}\big) d \phi_1 +\frac{\phi_1}{2} d \phi_3 -  \frac{\phi_1^2}{6} d \phi_2
\end{equation}
everywhere in $\Omega$. 
Since $\phi$ has everywhere maximal rank, taking into account the form of $d(2\phi_3)$ in \eqref{systems}, we may argue as in the proof
of \cite[Lemma 2]{Mag2010pams}, where we replace $f= (f_1, f_2, f_3)$ of this lemma by $(\phi_1, \phi_2, 2\phi_3)$ of the present theorem.
Then we conclude that the Jacobian $\det(((\phi_i)_{x_j})_{i,j=1,2})$ vanishes everywhere in $\Omega$. 
Combining this result with the first equation of \eqref{systems} we have that
the linear space spanned by $\nabla \phi_1$, $\nabla \phi_2$ and $\nabla \phi_3$ is one dimensional.
Finally, the second equation in \eqref{systems} allows us to conclude that the rank of $\nabla \phi$ is less than two everywhere in $\Om$, therefore
contradicting the assumption that $\Phi$ is a parametrization of $\Sigma$.
\end{proof}

\section{Upper Blow-up in the Engel group}

This section is devoted to the proof of the upper blow-up. 
First of all, we prove a rather natural property of the degree when we pass from 
the surface to its boundary.

\begin{lemma}\label{gradosulbordo}
If $\Sigma \subset \E$ is a $2$-dimensional $C^1$ smooth submanifold with boundary $\partial \Sigma$, then 
for every $p\in\partial \Sigma$ we have 
$d_{\partial \Sigma}(p) \leq d_{\Sigma}(p) - 1$ and in particular
    \begin{equation}\label{thesisgrado}
    d(\partial \Sigma) \leq d(\Sigma) - 1.
    \end{equation}
\end{lemma}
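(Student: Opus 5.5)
The inequality does not depend on the Engel structure. It is a general fact about the degree of a simple $2$-vector, and I would prove it by pure linear algebra in $\Lie(\E)$, working pointwise at a fixed $p\in\partial\Sigma$. Since $\partial\Sigma$ is a $C^1$ curve contained in $\Sigma$, its tangent line $T_p\partial\Sigma$ is spanned by a vector $v\in T_p\Sigma$. Complete $v$ to a basis $(v,w)$ of $T_p\Sigma$. Set $Z_v=J_p^{-1}(v)$ and $Z_w=J_p^{-1}(w)$ in $\Lie(\E)$. By the remark after Definition~\ref{tangentpvect}, $d_{\partial\Sigma}(p)=\deg(Z_v)$ and $d_\Sigma(p)=\deg(Z_v\wedge Z_w)$. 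The claim therefore reduces to
\[
\deg(Z_v\wedge Z_w)\ge \deg(Z_v)+1 .
\]
Taking the maximum over $p\in\partial\Sigma$ then gives \eqref{thesisgrado}, because $d_\Sigma(p)\le d(\Sigma)$.

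Decompose $Z=\sum_j \pi_j(Z)$ according to $V_1\oplus V_2\oplus V_3$. Let $a=\deg(Z_v)$, the largest $j$ with $\pi_j(Z_v)\neq0$, and write $u_a=\pi_a(Z_v)$. The first step is to replace $Z_w$ by $Z_w'=Z_w-cZ_v$, which does not change the wedge: $Z_v\wedge Z_w'=Z_v\wedge Z_w$. We choose $c$ so that the top component of $Z_w'$ is not parallel to $u_a$. This is done by a finite procedure. If the top degree $b$ of $Z_w$ differs from $a$, take $c=0$. If $b=a$ and $\pi_a(Z_w)$ is parallel to $u_a$, subtract the appropriate multiple of $Z_v$, which strictly lowers the top degree, and repeat. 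The process terminates because $Z_w'\neq 0$, by linear independence of $Z_v$ and $Z_w$. Call $b\ge1$ the resulting top degree and $u_b'$ the top component of $Z_w'$. Then $u_a$ and $u_b'$ are linearly independent: if $a\ne b$ they lie in different strata, and if $a=b$ this holds by construction.

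The second step is to expand
\[
Z_v\wedge Z_w'=\sum_{i\le a,\ j\le b}\pi_i(Z_v)\wedge \pi_j(Z_w').
\]
Writing each $\pi_i(Z_v)\wedge\pi_j(Z_w')$ in a graded basis, every basis $2$-vector $X_I$ that appears has $\deg X_I=i+j$. Hence the only contribution of degree $a+b$ is $u_a\wedge u_b'$, and there are no terms of higher degree. Since $u_a$ and $u_b'$ are independent, $u_a\wedge u_b'\neq0$. Its expansion in the basis $X_I$, all of degree $a+b$, has a nonzero coefficient, so $\deg(Z_v\wedge Z_w)=a+b\ge a+1$.

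The only delicate point is the reduction step guaranteeing independence of the top components. Without it, the degree-$(a+b)$ part could cancel, as happens when $a=b$ with parallel top parts. The iterative subtraction handles this case. Note also that $Z_v\ne0$ and that $\dim\Sigma=2$ ensures $w$ exists; no Engel-specific bracket relation is used.
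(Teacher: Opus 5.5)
Your proof is correct, but it runs on a different mechanism from the paper's.

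\textbf{The paper's argument.} It works directly with the coefficient matrix $C$ of $t_1,t_2$ in a graded basis. It picks $i_0$ with $c_1^{i_0}\neq0$ and $d_{i_0}=\deg(t_1)$. If every minor $M^{i_0j}_{12}(C)$ with $j\neq i_0$ vanished, the second column of $C$ would be a multiple of the first, contradicting rank two. Hence some $M^{i_0j_0}_{12}(C)\neq0$. That is, the coefficient of $Y_{i_0}\wedge Y_{j_0}$ in $t_1\wedge t_2$ is nonzero, which gives $\deg(t_1\wedge t_2)\ge d_{i_0}+d_{j_0}>d_{i_0}$.

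\textbf{Your argument.} You decompose along the strata and subtract a multiple of $Z_v$ from $Z_w$ so that the top components $u_a,u_b'$ are independent. You then identify the top-degree part of $Z_v\wedge Z_w$ as exactly $u_a\wedge u_b'$.

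\textbf{Comparison.} The paper's route is shorter because it only needs a lower bound. The existence of one nonzero Pl\"ucker coordinate through $i_0$ follows immediately from rank two, with no normalization of the second vector. Your route costs the reduction step. That step never actually repeats: a single subtraction of $\lambda Z_v$ already pushes the top degree strictly below $a$, so the new $b$ differs from $a$. In exchange, you get more than the inequality. You obtain the exact value $d_\Sigma(p)=\deg(Z_v)+\deg(Z_w')$ for a basis whose top components are independent, a basis-free statement in the spirit of the adapted frames of \cite[Lemma 3.1]{Mag13Vit}. As you note, neither argument uses the Engel bracket relations.
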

\begin{proof}
Let $p \in \partial \Sigma$ and choose two linearly independent tangent vectors $t_1 \in T_p \partial \Sigma\setminus\set{0}$ and $t_2 \in T_p \Sigma\setminus\set{0}$. Then we have a basis of $T_p \Sigma$ and we set 
$$
t_i = \sum_{j=1}^4 c_i^j\, Y_j(p) \quad \mbox{for } i=1,2,
$$ 
for some $c_i^j\in\R$ and a fixed graded basis $(Y_1,Y_2,Y_3, Y_4)$ of ${\rm Lie}(\E)$.
We consider the 2-vector
\[
t_1 \wedge t_2 = \sum_{1 \leq i < j \leq 4}^4 M^{ij}_{12}(C)\,  Y_i(p)\wedge Y_j(p)\in \Lambda_2(T_p\Sigma),
\]
where $M^{ij}_{12}(C)= c_1^i c_2^j - c_1^jc_2^i=-M_{12}^{ji}(C)$ and we have set 
\begin{equation}\label{casobordo}
C =
    \begin{pmatrix}
    c_1^1 & c_2^1 \\
    c_1^2 & c_2^2\\
    c_1^3 & c_2^3 \\
    c_1^4 & c_2^4 
\end{pmatrix}.
\end{equation}
Our claim corresponds to the inequality 
\begin{equation}\label{disuguaglianzagrado}
d_\Sigma(p)=\deg(t_1 \wedge t_2) >\deg(t_1)=d_{\partial \Sigma}(p). 
\end{equation}
Let us define $d_{i_0}=\deg(t_1)$ for some $i_0\in\{1,2,3,4\}$ and observe that $c_1^{i_0}\neq0$. We have two possible cases. If there exists
$j_0\in\{1,2,3,4\}\sm\{i_0\}$ such that 
$M_{12}^{i_0j_0}(C)\neq0$, then
\[
\deg(t_1\wedge t_2)\ge d_{i_0}+d_{j_0}>
d_{i_0}=\deg(t_1),
\]
proving our claim. 
We are left to consider the case where $M_{12}^{i_0j}(C)=0$ for all $j\in\set{1,2,3,4}\sm\set{i_0}$.
Such condition implies that the three rows of $C$ that differ from the $i_0$-th row, are all proportional to $(c_1^{i_0},c_2^{i_0})$, or some of them may vanish. In all these cases the rank of $C$ is less than two, hence contradicting the fact that $t_1\wedge t_2\neq0$.
This proves \eqref{disuguaglianzagrado}, hence also \eqref{thesisgrado} immediately follows.
\end{proof}

The next theorem is a crucial tool for our results. Its proof relies on Stokes' theorem.

\begin{theorem}\label{mainstokestheorem}
Let $\Sigma \subset \E$ be a $2$-dimensional $C^1$ smooth submanifold of degree 3. If $p\in\Sigma$ is a point of maximum degree $3$, 
then the orthonormal graded basis $(Y_1,Y_2,Y_3,Y_4)$ provided by Theorem~\ref{t:specialcoord} and having the
structure coefficients $\xi_{ij}$ given by \eqref{structurecoeff} must have $\xi_{13} = 0$.
\end{theorem}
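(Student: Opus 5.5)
The idea is to test the left invariant form $\theta_4$ against small pieces of $\Sigma$ through Stokes' theorem. Three ingredients combine: the Maurer--Cartan equation for $d\theta_4$, the boundary estimate of Lemma~\ref{gradosulbordo}, and the normal form of the tangent plane given by Theorem~\ref{t:specialcoord}.

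\emph{Maurer--Cartan equation and tangent plane at $p$.} Up to a left translation, which preserves left invariant forms and degrees, we may assume $p=0$. We work in the coordinates $F$ of \eqref{analyticdiffeo} built on the orthonormal graded basis $(Y_1,\dots,Y_4)$ of Theorem~\ref{t:specialcoord}, with dual forms \eqref{dualbasisgeneral}. From \eqref{structurecoeff} and $d\theta_k(Y_i,Y_j)=-\theta_k([Y_i,Y_j])$ we get
\[
d\theta_4=-\xi_{13}\,\theta_1\wedge\theta_3-\xi_{23}\,\theta_2\wedge\theta_3 .
\]
This can be checked directly by differentiating the explicit $\theta_4$ in \eqref{dualbasisgeneral}.

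Since $d_\Sigma(0)=3$, a tangent $2$-vector at $0$ has a nonzero component along $Y_1\wedge Y_3$ or $Y_2\wedge Y_3$, so in Theorem~\ref{t:specialcoord} we have $\alpha_1=\alpha_2=1$ and $\alpha_3=0$. The columns of $D\phi(0)$ in \eqref{e:matriceC} then give tangent vectors $t_1=Y_1+aY_2$ and $t_2=bY_2+Y_3$, hence
\[
t_1\wedge t_2=b\,Y_1\wedge Y_2+Y_1\wedge Y_3+a\,Y_2\wedge Y_3 .
\]
Here the line $T_0\Sigma\cap H_1$ is one dimensional. In the construction of \cite[Lemma 3.1]{Mag13Vit} the first vector of the adapted orthonormal frame is chosen spanning this horizontal line, and I expect to show that this forces $a=0$. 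If this is not literally part of the construction, I would redo that step of the lemma, choosing $Y_1$ as the unit vector of $T_0\Sigma\cap H_1$ and $Y_2$ orthogonal to it in $H_1$. The pullback of $\xi_{13}\theta_1\wedge\theta_3+\xi_{23}\theta_2\wedge\theta_3$ at $u=0$ is then $(\xi_{13}+a\,\xi_{23})\,du_1\wedge du_2=\xi_{13}\,du_1\wedge du_2$.

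\emph{Stokes argument.} For small $r>0$ let $D_r=\Phi(\overline{B_r})\subset\Sigma$, a $C^1$ surface with boundary whose boundary curve is $\Phi(\partial B_r)$. By Lemma~\ref{gradosulbordo}, $d(\partial D_r)\le d(D_r)-1\le 2$. So the tangent vector of $\partial D_r$ has no $Y_4$ component, and $\theta_4$ restricts to zero on $\partial D_r$. Stokes' theorem then gives
\[
0=\int_{\partial D_r}\theta_4=\int_{D_r}d\theta_4=-\int_{B_r}\phi^*\big(\xi_{13}\theta_1\wedge\theta_3+\xi_{23}\theta_2\wedge\theta_3\big).
\]
Since $\phi$ is only $C^1$, Stokes must be justified. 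I would approximate $\phi$ in $C^1(\overline{B_r})$ by smooth maps $\phi^\varepsilon$. Because $d\theta_4$ has continuous coefficients and involves only first derivatives, both sides converge: $\int_{B_r}(\phi^\varepsilon)^*d\theta_4\to\int_{B_r}\phi^*d\theta_4$ and $\int_{\partial B_r}(\phi^\varepsilon)^*\theta_4\to\int_{\partial B_r}\phi^*\theta_4=0$.

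\emph{Conclusion.} The integrand is continuous in $u$. Dividing by $|B_r|$ and letting $r\to0$ gives that its value at $u=0$ vanishes, which by the first step is $\xi_{13}$. Hence $\xi_{13}=0$.

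The main obstacle is the first step: making sure that the frame of Theorem~\ref{t:specialcoord} really has $a=0$, so that the pointwise identity $\xi_{13}+a\xi_{23}=0$ reduces to $\xi_{13}=0$. A secondary point needing care is the rigorous $C^1$ version of Stokes' theorem, together with the vanishing of $\theta_4$ on boundary curves of degree at most $2$.
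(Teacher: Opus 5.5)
Your proposal is correct and follows the paper's proof: the Maurer--Cartan identity $d\theta_4=-\xi_{13}\theta_1\wedge\theta_3-\xi_{23}\theta_2\wedge\theta_3$, then Stokes on $\phi(B_r)$ with $\int_{\partial}\theta_4=0$ by Lemma~\ref{gradosulbordo}, then dividing by $\pi r^2$ and evaluating at the origin; your $C^1$ approximation argument only makes explicit the Stokes step that the paper simply invokes. The step you flag as the main obstacle is in fact immediate, because in \eqref{e:matriceC} the block under $I_{\alpha_1}$ in the first column is zero, so the first column of $D\phi(0)$ is $(1,0,0,0)^T$, i.e.\ $a=0$; this is exactly how the paper gets $(d\phi_2\wedge d\phi_3)(0)=0$, so no modification of \cite[Lemma 3.1]{Mag13Vit} is needed (such a modification would also change the basis the statement refers to).
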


\begin{proof}
By our assumptions $d(\Sigma) = d_\Sigma(p)= 3$.
Applying Theorem~\ref{t:specialcoord} and following its notation, we have $\alpha_1=\alpha_2 =1$ and $\alpha_3 = 0$. 
According to our claim, the same theorem provides us with the graded basis $(Y_1,Y_2,Y_3, Y_4)$
and a $C^1$ smooth parametrization $\Phi : U \rightarrow \E$ of $p^{-1}\Sigma$ around the origin, such that  
\begin{equation}\label{para4}
        \phi(x_1, x_3)=\big(x_1,\phi_2(x),x_3,\phi_4(x) \big)
\end{equation} 
is defined on an open neighborhood $U \subset \R^2$ of $0$,
we have the formula $\Phi=F\circ\phi$ and $F:\R^4\to\E$, $F(x)=\exp(\sum_{j=1}^4x_jY_j)$.
In particular, \eqref{e:matriceC} gives us
\begin{equation}\label{graphphi}
 D\phi=
\begin{pmatrix}
	1    & 0 \\
	o(1) & \star \\
	0  &  1\\
	o(1) & o(1)
\end{pmatrix}
\end{equation}
around the origin of $\R^2$. 
We recall that (see for instance \cite[Proposition~14.32]{Lee2013IntroSM}), if $V_1,V_2$ are general vector fields of $\E$, the exterior differential 
of a 1-form $\beta$ reads as 
\[
d\beta(V_1, V_2)=V_1\beta(V_2)-V_2\beta(V_1)-\beta([V_1,V_2]).
\]
Since both $\theta_k$ and $Y_j$ are both left invariant for $k,j=1,\ldots,4$, the previous formula gives
\[
d\theta_k(Y_i, Y_j)=-\theta_k([Y_i,Y_j])
\]
for all $i,j,k=1,2,3,4$, where $(\theta_1,\theta_2,\theta_3,\theta_4)$ is the dual basis of
$(Y_1,Y_2,Y_3,Y_4)$.
We notice that the only possibly nonvanishing values
of $d\theta_4$ on any couple of $(Y_i,Y_j)$ with $i<j$ are 
\begin{align}\label{relationdual}
d\theta_4(Y_i,Y_3)= -\theta_4([Y_i, Y_3]) =- \xi_{i3} (\theta_i \wedge \theta_3)(Y_i,Y_3)=-\xi_{i3},
\end{align}
for $i=1,2$. Observing that the previous 2-forms vanish on all other 2-vectors,
we get 
\begin{equation}\label{eq:dtheta_4form}
d\theta_4 = -\xi_{13}\theta_1 \wedge \theta_3-\xi_{23}\theta_2 \wedge \theta_3.
\end{equation}
Now, let $r > 0$ be sufficiently small such that $B_r \Subset U \subset \R^2$, where $B_r$ is the Euclidean ball of radius $r$ 
and centered at the origin. Let us consider the restricted surface
$\Sigma_r=\phi(B_r)$. Since $\phi$ is a graph map,
$\Sigma_r$ is a $C^1$ smooth, orientable surface with degree $d(\Sigma_r) = d(\Sigma)$ and orientable boundary $\partial \Sigma_r$. 

Applying the Stokes theorem for $C^1$ smooth manifolds, we get the following equalities
\begin{equation}\label{fundstoke}
      0 = 
   \int_{\partial \Sigma_r} \theta_4 = \int_{\Sigma_r} d \theta_4.   
\end{equation}
The first equality follows by Lemma \ref{gradosulbordo}, observing that 
$d(\partial \Sigma_r)\leq 2$ and $\theta_4(Y_k)= 0$ for $k=1,2,3$. 
Using \eqref{fundstoke} and \eqref{eq:dtheta_4form}, we obtain
\begin{align*}
    0 &= \lim_{r \rightarrow 0}\frac{1}{\pi r^2} \int_{B_r} \phi^*(d\theta_4)=-\xi_{13}\phi^*(\theta_1 \wedge \theta_3)(0)-\xi_{23}
    \phi^*(\theta_2 \wedge \theta_3)(0) \\
   &=-\xi_{13}(d\phi_1 \wedge d\phi_3)(0)-\xi_{23}  \phi^*(d\phi_2 \wedge d\phi_3)(0)
\end{align*}
where the last equality follows from \eqref{dualbasisgeneral}. The form of the Jacobian of $\phi$ at the 
origin \eqref{graphphi} proves that $(d\phi_1 \wedge d\phi_3)(0)=1$ and $(d\phi_2 \wedge d\phi_3)(0)=0$,
therefore concluding the proof.
\end{proof}

The previous theorem allows us to prove the upper blow-up at points of maximum degree in surfaces of degree three.

\begin{proposition}\label{Gammas}
Let $\Sigma \subset \E$ be a $C^1$ smooth surface with $d(\Sigma) = 3$. If $p \in \Sigma$ is a point of maximum degree, then 
     \begin{equation}\label{mainequality}
\theta^3(\mu_\Sigma, p) = \beta_d \big(A_p \Sigma \big).
    \end{equation}
\end{proposition}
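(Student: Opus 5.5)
The plan is to derive \eqref{mainequality} from Theorem~\ref{genres}: all the work goes into verifying the local expansion \eqref{localexp} for $\Gamma=\phi\circ\eta$.

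\emph{Setup.} By left invariance, $\Sigma_p=p^{-1}\Sigma$ still has degree $3$, and $0$ is a point of maximum degree. Theorem~\ref{t:specialcoord} gives $\alpha_1=\alpha_2=1$, $\alpha_3=0$, the graph parametrization \eqref{para4} and the Jacobian form \eqref{graphphi}. Moreover Theorem~\ref{mainstokestheorem} gives $\xi_{13}=0$. Here $k=2$, $b_1=1$, $b_2=2$ and $I=\{1,3\}$. With $\eta(t)=(t_1,\tfrac12|t_2|t_2)$ we get
\[
\Gamma(t)=\big(t_1,\ \phi_2(t_1,\eta_2(t_2)),\ \eta_2(t_2),\ \phi_4(t_1,\eta_2(t_2))\big).
\]
So $\Gamma_1$ and $\Gamma_3$ already have the required form. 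It remains to show $\Gamma_2=o(|t|)$ and $\Gamma_4=o(|t|^3)$.

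\emph{The second component.} Since $\phi_2\in C^1$, $\phi_2(0)=0$ and $\partial_{x_1}\phi_2(0)=0$, we have $\phi_2(x_1,x_3)=o(|x_1|)+O(|x_3|)$. Substituting $x_1=t_1$ and $|x_3|\le |t|^2$ gives $\Gamma_2=o(|t|)$.

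\emph{The fourth component.} Since $d_{\Sigma_p}\le 3$ everywhere, the coefficients of $\widetilde Y_1\wedge\widetilde Y_4$, $\widetilde Y_2\wedge\widetilde Y_4$ and $\widetilde Y_3\wedge\widetilde Y_4$ in \eqref{2vector} vanish identically on $U$. Because the tangent $2$-vector is nonzero, a rank argument turns this into $\phi^*\theta_4=0$ on $U$: if some tangent vector $v$ had $\theta_4(v)\ne0$, then for a complementary tangent vector $w$ some $\theta_4\wedge\theta_j(v,w)\neq 0$. Using the formula for $\theta_4$ in \eqref{dualbasisgeneral} with $\xi_{13}=0$, this yields
\[
d\phi_4=\tfrac12\xi_{23}\phi_2\,dx_3-\Big(\tfrac16\xi_{12}\xi_{23}x_1\phi_2+\tfrac{\xi_{23}}2x_3\Big)d\phi_2+\tfrac16\xi_{12}\xi_{23}\phi_2^2\,dx_1 .
\]
I then integrate along the path from $0$ to $(x_1,0)$ and then to $(x_1,x_3)$, using $\partial_{x_1}\phi_2=o(1)$, $\partial_{x_3}\phi_2=O(1)$ and $\phi_2(x_1,s)=o(|x_1|)+O(|s|)$.

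On the horizontal segment $x_3=0$, the $x_3$-term drops out and the integrand is $o(x_1^2)$, so $\phi_4(x_1,0)=o(|x_1|^3)$. On the vertical segment the integrand is bounded by
\[
o(|x_1|)+O(|s|)+O(x_1^2)+O(|s|),
\]
so the increment is $o(|x_1||x_3|)+O(x_3^2)+O(x_1^2|x_3|)$. With $x_1=t_1$ and $|x_3|\le|t|^2$, all these terms are $o(|t|^3)$. Hence $\Gamma_4=o(|t|^3)$.

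\emph{Conclusion and main obstacle.} With \eqref{localexp} verified, Theorem~\ref{genres} gives $\theta^3(\mu_\Sigma,p)=\beta_d(A_p\Sigma)$. The critical step is the estimate for $\Gamma_4$. Without $\xi_{13}=0$, the term $\tfrac12\xi_{13}x_1\,dx_3$ would contribute $\tfrac12\xi_{13}x_1x_3$, which is exactly of order $|t|^3$ and not $o(|t|^3)$. This is precisely where Theorem~\ref{mainstokestheorem} is needed. A secondary point is checking that $\phi^*\theta_4=0$ really follows from the vanishing of the degree-$\ge4$ components, and that only $C^1$ regularity is used in the path integration.
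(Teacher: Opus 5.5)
Your proof is correct and follows the paper's route. You reduce to \eqref{localexp} via Theorem~\ref{genres}, take $\xi_{13}=0$ from Theorem~\ref{mainstokestheorem}, bound $\Gamma_2$ by the $C^1$ Taylor estimate, and control $\nabla\phi_4$ through the vanishing of the degree $\ge 4$ components in \eqref{2vector}. The one variation is a convenient repackaging: writing the three vanishing coefficients as $\phi^*\theta_4=0$ gives $\partial_{x_1}\phi_4$ directly, instead of solving the $\widetilde Y_3\wedge\widetilde Y_4$ equation and dividing by $1-x_1\frac{\xi_{12}}{2}\partial_{x_3}\phi_2$, and you integrate along an L-shaped path rather than along the ray $s\mapsto s\eta(t)$.
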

\begin{proof}
By Theorem~\ref{genres}, the assertion \eqref{mainequality} follows by proving the local expansion \eqref{localexp}. 
Following Theorem \ref{t:specialcoord} and its notation, 
our assumptions imply that $\alpha_1= \alpha_2 = 1$, hence
$d_\Sigma(p) = \alpha_1 + 2\alpha_2$ and there exists a $C^1$ smooth parametrization $\Phi : U \rightarrow \E$
of $p^{-1}\Sigma$ around $0$, such that $p^{-1}\Sigma$ has degree 3 at the origin,
\begin{equation}\label{para123}
        \phi(x_1, x_3)=\big(x_1,\phi_2(x),x_3,\phi_4(x) \big)
\end{equation} 
is defined on an open neighborhood $U \subset \R^2$ of $0$ and we have an orthonormal graded basis $(Y_1,Y_2,Y_3,Y_4)$ 
such that we can write $\Phi=F\circ\phi:U\to\Sigma$, where $F(y)=\exp(\sum_{j=1}^4y_jY_j)$.
We remark that $p^{-1}\Sigma$ has degree 3 at the origin and $\Phi(0)=0$.
By \eqref{e:matriceC} restricted to our case, we have
\begin{equation}\label{dinuovomatrix}
D \phi =
\begin{pmatrix}
    1  &  0 \\
    o(1) & \star \\
    0 & 1  \\
    o(1) & o(1)
\end{pmatrix}.
\end{equation}
By \eqref{inducedb}, the induced degrees are $b_1=1$ and $b_2 =2$, hence we introduce the $C^1$ smooth homeomorphism $\eta$
defined in \eqref{funzioneeta} by 
\begin{equation}\label{eta123}
\eta(t_1, t_2) = 
\big(t_1,  {\rm sgn}(t_2) t_2^2/2 \big).
\end{equation}
From \eqref{para123} and \eqref{eta123}, it follows that $\Gamma_1$ and $\Gamma_3$ satisfy  \eqref{localexp}.
On the other hand, thanks to \eqref{dinuovomatrix} and the $C^1$ regularity, we have
\begin{align}\label{stimasuphi2}
 \lvert \phi_2 (x_1, x_3)\rvert &= \Big|\int_0^1 \langle\nabla \phi_2(s x_1, s x_3), (x_1, x_3)\rangle ds\Big|\\
\notag &=\Big|\int_0^1 \langle\nabla \phi_2(s x_1, s x_3)-\nabla\phi_2(0,0), (x_1, x_3)\rangle ds\Big| + \lvert \partial_{x_3} \phi_2(0) x_3  \rvert \\
\notag &\leq \lvert(x_1,x_3)\rvert M(\lvert (x_1,x_3)\rvert) +  \lvert \partial_{x_3} \phi_2(0) x_3  \rvert,
\end{align}
for every $(x_1,x_3)$ in $U$,
where $M(\delta)\coloneqq \max_{x \in \overline{B}_\delta(0)} \lvert \nabla \phi_2(x) - \nabla \phi_2(0)\rvert$. 
Therefore, we obtain
\begin{equation}\label{eq:phi_2(eta)}
 \lvert \Gamma_2(t) \rvert= \lvert \phi_2(\eta(t))\rvert \leq\lvert \eta(t) \rvert M(\lvert \eta (t)\rvert) + \lvert \partial_{x_3}\phi_2(0) \frac{t_2^2}2\rvert=o(t) \quad \mbox{as}\quad t\rightarrow 0.
\end{equation}
It remains to prove that $\Gamma_4$ satisfies the condition arising from \eqref{localexp}, 
namely $\Gamma_4(t)=o(|t|^3)$ as $t\to0$. The proof of this fact will require the assumption on the degree of $\Sigma$. It holds:
\begin{align*}
 \Gamma_4(t)  
&=  \phi_4(\eta(t)) \\
&=\int_0^1 \langle \nabla\phi_4(
(s t_1,  {\rm sgn}(t_2) s t_2^2/2), (t_1, {\rm sgn}(t_2) t_2^2/2) \rangle ds  \notag\\
    &= t_1 \int_0^1 \partial_{x_1} \phi_4(st_1, {\rm sgn}(t_2) s  t_2^2/2) {\rm ds} + {\rm sgn}(t_2)\frac{t_2^2}{2}  \int_0^1 \partial_{x_3} \phi_4(st_1, {\rm sgn}(t_2) s t_2^2/2) {\rm ds}. 
\end{align*}
Our claim follows if we prove that $\partial_{x_1}\phi_4(\eta(t)) = o(t^2)$ and $\partial_{x_3}\phi_4(\eta(t)) = o(t)$ as $t \rightarrow 0$. 
Since $d(\Sigma) < 4$, the coefficient of $\widetilde{Y}_1 \wedge \widetilde{Y}_4$ in \eqref{2vector}
must be equal to zero, obtaining 
\begin{equation}\label{firstv2}
\partial_{x_3} \phi_4 = \frac12(\xi_{13}x_1+\xi_{23}\phi_2) - \Big(\frac{\xi_{23}}{2}x_3 + \frac{1}{6}\xi_{12}(\xi_{23}x_1 \phi_2 + \xi_{13} x_1^2)
\Big)\partial_{x_3} \phi_2.
\end{equation}
By Theorem~\ref{mainstokestheorem}, since $p^{-1}\Sigma$ has maximum degree 3, it must be $\xi_{13}=0$. 
This is a crucial fact for our estimate, since it eliminates the dangerous lower order term $x_1$ and turns \eqref{firstv2} into the following
\begin{equation}\label{firstv}
\partial_{x_3} \phi_4 = -\xi_{23}\Big(\frac{x_3}{2} +\frac{x_1}{6}\xi_{12} \phi_2 
     \Big)\partial_{x_3} \phi_2 +\frac12\xi_{23}\phi_2.
\end{equation}
We immediately get $\der_{x_3}\phi_4(\eta(t)=o(t)$, since $\phi_2(\eta(t))=o(t)$ as $t\to0$, due to \eqref{eq:phi_2(eta)}.
To estimate $\der_{x_1}\phi_4$, we exploit the other condition $d(\Sigma) < 5$, hence the vanishing of the coefficient of 
$\Tilde{Y}_3 \wedge \Tilde{Y}_4$ in \eqref{2vector}. We obtain
 \begin{align*}
   \partial_{x_1} \phi_4 
   &= \frac{1}{1- x_1 \frac{\xi_{12}}{2}\partial_{x_3}\phi_2} \Bigg\{\Big(
 \frac{1}{12}\xi_{12}(\xi_{23}x_1 \phi_2 + \xi_{13} x_1^2)
    - \frac{\xi_{23}}{2}x_3\Big) \partial_{x_1}\phi_2  - x_1 \frac{\xi_{12}}{2}\partial_{x_1}\phi_2 \partial_{x_3}\phi_4 
    + \phi_2 \frac{\xi_{12}}{2}\partial_{x_3}\phi_4\\
    & - \Big(\frac{1}{12}\xi_{12}(\xi_{13}x_1 \phi_2 + \xi_{23}\phi_2^2) + \frac{\xi_{13}}{2}x_3 
 \Big) +\Big( \frac{\xi_{12} \xi_{13}}{4} x_1 x_3 + \frac{\xi_{12} \xi_{23}}{4}\phi_2 x_3\Big) \partial_{x_3}\phi_2\Bigg\}.
\end{align*}
Thus, by Theorem \ref{mainstokestheorem} we get 
 \begin{align*}
   \partial_{x_1} \phi_4
   &= \frac{1}{1- x_1 \frac{\xi_{12}}{2}\partial_{x_3}\phi_2} \Bigg\{\Big(
 \frac{1}{12}\xi_{12}\xi_{23}x_1 \phi_2
    - \frac{\xi_{23}}{2}x_3\Big) \partial_{x_1}\phi_2  - x_1 \frac{\xi_{12}}{2}\partial_{x_1}\phi_2 \partial_{x_3}\phi_4 
    + \phi_2 \frac{\xi_{12}}{2}\partial_{x_3}\phi_4\\ \notag
    & - \frac{1}{12}\xi_{12} \xi_{23}\phi_2^2 + 
    \frac{\xi_{12} \xi_{23}}{4}\phi_2 x_3 \partial_{x_3}\phi_2\Bigg\},
\end{align*}
Since we already know that $\partial_{x_3}\phi_4 (\eta(t)) = o(t)$, $\phi_2(\eta(t))=o(t)$, and $\der_{x_1}\phi_2=o(1)$ 
from \eqref{dinuovomatrix}, the previous formula yields 
$$
\partial_{x_1}\phi_4(\eta(t)) = o(t^2)\quad \mbox{as}\quad t\rightarrow 0,
$$
therefore concluding the proof.
 \end{proof}

\begin{proposition}\label{Gammas2}
    Let $\Sigma \subset \E$ be a  $C^1$ smooth surface with degree $d(\Sigma)=4$. Let $p \in \Sigma$ be a point of maximum degree, then 
     \begin{equation}\label{mainequality2}
        \theta^4(\mu_\Sigma, p) = \beta_d \big(A_p \Sigma \big).
    \end{equation}
\end{proposition}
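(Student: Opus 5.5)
By Theorem~\ref{genres}, it suffices to establish the local expansion \eqref{localexp} for $\Gamma=\phi\circ\eta$ at the origin of $p^{-1}\Sigma$. Among the $2$-vectors $Y_i\wedge Y_j$, those of degree $4$ are $Y_1\wedge Y_4$ and $Y_2\wedge Y_4$. Hence a point of maximum degree $4$ forces $\alpha_1=1$, $\alpha_2=0$, $\alpha_3=1$ in Theorem~\ref{t:specialcoord}. That theorem then gives an orthonormal graded basis $(Y_1,\dots,Y_4)$ with structure coefficients $\xi_{ij}$, and a graph parametrization
\[
\phi(x_1,x_4)=\big(x_1,\phi_2(x),\phi_3(x),x_4\big),
\]
where $\partial_{x_1}\phi_2(0)=\partial_{x_1}\phi_3(0)=0$ by \eqref{e:matriceC}. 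Moreover, \eqref{inducedb} gives $b_1=1$ and $b_2=3$, so $\eta(t)=(t_1,{\rm sgn}(t_2)|t_2|^3/3)$. With these choices $\Gamma_1$ and $\Gamma_4$ already have the form required in \eqref{localexp}. It remains to show
\[
\Gamma_2(t)=o(|t|),\qquad \Gamma_3(t)=o(|t|^2).
\]

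For $\Gamma_2$ I will repeat the argument of \eqref{stimasuphi2}. The $C^1$ regularity gives $\phi_2(x)=\partial_{x_4}\phi_2(0)\,x_4+|x|\,M(|x|)$ with $M(\delta)\to0$. Since the second component of $\eta(t)$ is $O(|t|^3)$ and $|\eta(t)|=O(|t|)$, we get $\Gamma_2(t)=o(|t|)$. The same argument gives the a priori bound $\phi_3(\eta(t))=o(|t|)$, which will be needed below.

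The main step is $\Gamma_3$. As in the proof of Proposition~\ref{Gammas}, I write
\[
\Gamma_3(t)=t_1\int_0^1\partial_{x_1}\phi_3(s\eta(t))\,ds+{\rm sgn}(t_2)\frac{|t_2|^3}{3}\int_0^1\partial_{x_4}\phi_3(s\eta(t))\,ds .
\]
The second term is $O(|t|^3)$ because $\partial_{x_4}\phi_3$ is bounded. So the claim reduces to $\partial_{x_1}\phi_3(\eta(t))=o(|t|)$. Here the mere $C^1$ information $\partial_{x_1}\phi_3=o(1)$ is too weak, and I will use the degree assumption $d(\Sigma)<5$ instead. It forces the coefficient of $\widetilde Y_3\wedge\widetilde Y_4$ in \eqref{2vector} to vanish everywhere.

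In our coordinates the minors are
\[
\phi_u^{34}=\partial_{x_1}\phi_3,\quad \phi_u^{14}=1,\quad \phi_u^{24}=\partial_{x_1}\phi_2,\quad \phi_u^{13}=\partial_{x_4}\phi_3,\quad \phi_u^{12}=\partial_{x_4}\phi_2,
\]
and $\phi_u^{23}$ is bounded. Solving the vanishing condition for $\partial_{x_1}\phi_3$ gives
\[
\partial_{x_1}\phi_3=\frac{\xi_{12}}{2}\big(x_1\partial_{x_1}\phi_2-\phi_2\big)+R,
\]
where $R$ is a sum of bounded minors multiplied by quadratic expressions in $x_1,\phi_2,\phi_3$. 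Along $\eta(t)$ the main term is $o(|t|)$, because $x_1=t_1$ and $\partial_{x_1}\phi_2\to0$, and because $\phi_2(\eta(t))=o(|t|)$. Each quadratic expression in $R$ is $O(|t|^2)$, since $x_1=O(|t|)$ and $\phi_2,\phi_3=o(|t|)$. Hence $\partial_{x_1}\phi_3(\eta(t))=o(|t|)$, and therefore $\Gamma_3(t)=o(|t|^2)$.

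The only delicate point I expect is the bookkeeping in \eqref{2vector}: I need to check that no term linear in $x_1$ alone survives in $R$. Such a term would be the analogue of the $\xi_{13}x_1$ obstruction in Proposition~\ref{Gammas}. From the formula, every coefficient multiplying $\phi_u^{13}$, $\phi_u^{23}$ or $\phi_u^{12}$ is quadratic in $(\phi_1,\phi_2,\phi_3)$, so I do not expect a Stokes-type argument like Theorem~\ref{mainstokestheorem} to be needed here. If such a term did appear, I would try to kill it by applying Stokes' theorem to $\theta_3$ via Lemma~\ref{gradosulbordo}, in the same way as in that theorem.
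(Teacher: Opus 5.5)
Your proposal is correct and follows the paper's proof essentially step for step. Theorem~\ref{genres} reduces the claim to $\Gamma_2(t)=o(|t|)$ and $\Gamma_3(t)=o(|t|^2)$, and you get the latter, as the paper does, from the vanishing of the $\widetilde Y_3\wedge\widetilde Y_4$ coefficient in \eqref{2vector}. Treating $\phi_u^{23}$ as merely bounded, instead of dividing by $1-p_1\partial_{x_4}\phi_2$ as in \eqref{vincologrado4}, is a harmless simplification.

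One correction to your bookkeeping. The coefficients of $\phi_u^{13}$ and $\phi_u^{23}$ contain $-\frac{\xi_{13}}{2}\phi_3$ and $-\frac{\xi_{23}}{2}\phi_3$, which are linear, not quadratic. Along $\eta(t)$ these terms are only $o(|t|)$, not $O(|t|^2)$. That is still all you need, and it is exactly what your a priori bound $\phi_3(\eta(t))=o(|t|)$ provides.
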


\begin{proof}
By Theorem \ref{genres}, our claim follows from the local expansion \eqref{localexp}.
In view of Theorem~\ref{t:specialcoord},  our assumptions give $\alpha_1= \alpha_3 = 1$ such that $d_\Sigma(p) = \alpha_1 + 3\alpha_3$ 
and a $C^1$ smooth local parametrization $\Phi : U \rightarrow p^{-1}\Sigma$ around $0\in p^{-1}\Sigma$ such that
$$\phi(x_1, x_4) = (x_1, \phi_2(x), \phi_3(x), x_4)$$
is defined on an open set $U \subset \R^2$ of $0$, and there exists an orthonormal graded basis $(Y_1,Y_2,Y_3, Y_4)$ 
with $\Phi=F\circ\phi$ and $F:\R^4\to\E$ is defined as $F(y)=\exp(\sum_{j=1}^4y_jY_j)$.
We notice that $\Phi(0)=0$ and $p^{-1}\Sigma$ has degree 4 at the origin.
Thus, \eqref{e:matriceC} in our case yields
\begin{equation}\label{formagrado4}
D \phi = 
\begin{pmatrix}
    1  & 0 \\
    o(1) & \star \\
    o(1) & \star \\
    0 & 1 
\end{pmatrix}.
\end{equation}
By \eqref{inducedb} of Theorem \ref{genres}, the induced degrees are $b_1 = 1,b_2 = 3$, hence we introduce the 
$C^1$ smooth homeomorphism $\eta:\R^2\to\R^2$ defined as
$$\eta(t_1, t_2) = \big( t_1  {\rm sgn}(t_1), \frac{ t_2^3}{3} {\rm sgn}(t_2) \big)
.$$
The proof is complete if we verify the conditions of \eqref{localexp} for both $\Gamma_2$ and $\Gamma_3$,
being the conditions for  $\Gamma_1$ and $\Gamma_4$ already satisfied.

Using the form of \eqref{formagrado4} and arguing as in \eqref{stimasuphi2}, we obtain that
\begin{equation}\label{disugsuphij}
    \lvert \phi_j (x_1,x_4) \rvert \leq \lvert(x_1,x_4)\rvert M(\lvert (x_1,x_4)\rvert) +  \lvert \partial_{x_4} \phi_j(0) x_4  \rvert,
\end{equation}
for $j=2,3$. Then we get $\lvert \Gamma_2(t) \rvert =\lvert \phi_2 (\eta(t)) \rvert
= o(t)$ as $t\rightarrow 0$.
We are finally left to prove that $\Gamma_3(t)=o(t^2)$. Since $d(\Sigma) < 5$, the coefficient of $\widetilde{Y}_3 \wedge \widetilde{Y}_4$ in \eqref{2vector} must be equal to zero,
\[
\partial_{x_1}\phi_3\big(1 - p_1\partial_{x_4}\phi_2\big) + p_1\partial_{x_1}\phi_2 \partial_{x_4}\phi_3 -x_1 \frac{\xi_{12}}{2}\partial_{x_1}\phi_2 + \phi_2 \frac{\xi_{12}}{2} - p_2 \partial_{x_4} \phi_3 +p_3 \partial_{x_4} \phi_2 = 0 ,
\]
where we have set
$$p_i \coloneqq 
\begin{cases}
 \frac{1}{12}\xi_{12}(\xi_{23}x_1 \phi_2 + \xi_{13} x_1^2)
     - \frac{\xi_{23}}{2}\phi_3 \quad i=1, \\
     \\
   \frac{1}{12}\xi_{12}(\xi_{13}x_1 \phi_2 + \xi_{23}\phi_2^2) + \frac{\xi_{13}}{2}\phi_3 
\quad i=2,\\
 \\  \frac{\xi_{12} \xi_{13}}{4} x_1 \phi_3 + \frac{\xi_{12} \xi_{23}}{2}\phi_2 \phi_3 \quad i=3.
\end{cases}
$$
Since $p_1$ vanishes at the origin, we can divide the previous equality by 
$1 - p_1 \partial_{x_4}\phi_2$, hence
\begin{equation}\label{vincologrado4}
	\partial_{x_1} \phi_3 = \frac{1}{1 - p_1\partial_{x_4}\phi_2}\Big( x_1 \frac{\xi_{12}}{2}\partial_{x_1}\phi_2 - \phi_2 \frac{\xi_{12}}{2} -p_1\partial_{x_1}\phi_2\partial_{x_4}\phi_3   - p_3 \partial_{x_4}\phi_2 + p_2\partial_{x_4}\phi_3 \Big).
\end{equation}
Since we already know that \eqref{disugsuphij} gives $\phi_2(\eta(t))=o(t)$, formula \eqref{vincologrado4} immediately gives $\partial_{x_1} \phi_3(\eta(t)) = o(t)$, which leads to
\begin{align}\label{identitygrad4}
   \notag \lvert \Gamma_3(t) \rvert &
    = \lvert \phi_3 (\eta(t))\rvert \\
    &= \left| t_1 \int_0^1 \partial_{x_1} \phi_3(st_1, s t_2^3/3) {\rm ds}  +  t_2^3/3\int_0^1 \partial_{x_4} \phi_3(st_1, s t_2^3/3) {\rm ds} \right| \\
    &\notag= o(t^{2}) \quad \mbox{as}\quad  t\rightarrow 0,
\end{align}
concluding the proof.
\end{proof}

\section{Appendix}\label{appendix}

For the reader's sake, in this appendix we provide a proof of the negligibility theorem needed for our area formula, namely Theorem~\ref{teotrascu}. Such result was stated in \cite[Remark~2]{LeDMag2010} without proof.

We start with a general fact of measure theory, that can be obtained from \cite[2.10.19]{Federer69}.
\begin{lemma}
	Let $X$ be a metric space, let $\mu$ be a Borel measure on $X$ and let $\{E_i\}_{i \in \mathbb{N}}$ be an open covering of $X$ such that $\mu(E_i) < \infty$. Let $Z \subset X$ be a Borel set and suppose that
	$$
	\lim_{r \rightarrow 0^+} r^{-\beta} \mu(\B(p,r)) \geq s > 0
	$$
	whenever $p \in Z$, where $\beta > 0$. Then $\mu(Z) \geq s\, \mathcal{S}^\beta(Z)$.
\end{lemma}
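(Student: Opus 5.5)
The plan is to run a standard Vitali-type covering argument, essentially reproducing the proof of the density theorem in \cite[2.10.19(3)]{Federer69}, with closed balls. The hypothesis is read as a lower bound on the $\liminf$. The aim is to show that for every $t<s$ and every $\delta>0$ we have $t\,\mathcal{S}^\beta_\delta(Z)\le \mu(V)$ for every open set $V\supset Z$. Here $\mathcal{S}^\beta_\delta$ is the size-$\delta$ approximation with gauge $\zeta_\beta(\B)=(\mathrm{diam}\,\B/2)^\beta$. Since $\diam\B(p,r)\le 2r$, we have $\zeta_\beta(\B(p,r))\le r^\beta$, and this inequality goes in the right direction.

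First I reduce to the case where $Z$ has finite measure and $\mu$ is outer regular on it. Since $X=\bigcup_i E_i$ with $E_i$ open and $\mu(E_i)<\infty$, I split $Z$ into the Borel pieces $Z_i=Z\cap E_i\setminus\bigcup_{j<i}E_j$. For a Borel measure finite on the open sets $E_i$ of a metric space, Borel sets inside $E_i$ are approximable from outside by open subsets of $E_i$; this is the standard regularity of \cite[2.2.2]{Federer69}. So for each piece and each $\varepsilon>0$ I can fix an open set $V_i\subset E_i$ with $Z_i\subset V_i$ and $\mu(V_i)\le\mu(Z_i)+\varepsilon 2^{-i}$.

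Next, for each $p\in Z_i$ and each $t<s$, the hypothesis gives arbitrarily small radii $r<\delta/2$ with $\B(p,r)\subset V_i$ and $\mu(\B(p,r))\ge t r^\beta\ge t\,\zeta_\beta(\B(p,r))$. These balls form a fine cover of $Z_i$. I then apply the Vitali covering theorem for closed balls in metric spaces, \cite[2.8.4--2.8.6]{Federer69} with the $5r$-enlargement. This yields a disjoint countable subfamily $\{\B(p_k,r_k)\}$ such that for every $m$ we have $Z_i\subset\bigcup_{k\le m}\B(p_k,r_k)\cup\bigcup_{k>m}\B(p_k,5r_k)$.

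The main obstacle is that the enlarged balls $\B(p_k,5r_k)$ are not controlled by the density bound, which is only available at the original radii. The dilation factor would then spoil the constant, giving $5^{-\beta}s$ instead of $s$. To avoid this I use the version of the argument that handles the tail through summability. Disjointness gives $\sum_k t r_k^\beta\le\mu(V_i)<\infty$, so $\sum_{k>m}(5r_k)^\beta\to0$. Hence
\[
\mathcal{S}^\beta_{5\delta}(Z_i)\le \sum_{k\le m}r_k^\beta+\sum_{k>m}(5r_k)^\beta\le t^{-1}\mu(V_i)+o(1).
\]
The enlarged balls only enter the vanishing tail, so the constant $t$ is preserved. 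Letting $m\to\infty$, then $\delta\to0$ and $t\uparrow s$, I get $s\,\mathcal{S}^\beta(Z_i)\le\mu(Z_i)+\varepsilon2^{-i}$. Summing over $i$, using the countable subadditivity of $\mathcal{S}^\beta$ and letting $\varepsilon\to0$ gives $\mu(Z)\ge s\,\mathcal{S}^\beta(Z)$.
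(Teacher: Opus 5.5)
Your argument is correct and is essentially what the paper intends. The paper gives no proof beyond the pointer to \cite[2.10.19]{Federer69}, and your Vitali covering (with the $5r$-enlarged balls confined to a summable tail), combined with the outer approximation of \cite[2.2.2]{Federer69} on the finite-measure pieces $Z_i$, is the proof of 2.10.19(3) localized to the covering $\{E_i\}$; the one step you assert without comment, countability of the disjoint subfamily, holds because each of its balls has $\mu$-measure at least $t r_k^\beta>0$ and all lie in $V_i$ with $\mu(V_i)<\infty$.
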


The previous lemma immediately gives the following proposition,
where $\mu_k$ denotes the $k$-dimensional Riemannian surface measure induced on a submanifold $\Sigma\subset\E$ with respect to our fixed left invariant Riemannian metric.

\begin{proposition}\label{propfortras} Let $\Sigma$ be $k$-dimensional $C^{1}$ submanifold of $\E$ and let $\mu_k$ be $k$-dimensional left invariant Riemannian measure. 
If $Z$ is a Borel
set of $\Sigma$ such that for every $p \in Z$
$$ \lim_{r \rightarrow 0^+} r^{-\beta} \mu_k(\B(p,r) \cap \Sigma) = + \infty,
$$
then $\mathcal{S}^\beta(Z) = 0$ for every $\beta>0$.
\end{proposition}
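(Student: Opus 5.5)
Since $\Sigma$ is a $C^1$ submanifold, $\mu_k$ restricted to $\Sigma$ is a Borel measure with a natural open covering by sets of finite measure. I would take a relatively compact parametrized neighbourhood of each point, intersected with an open set of $\E$, and extract a countable subcover by second countability. This places us in the setting of the preceding lemma, applied with $X=\Sigma$ (or $X=\E$) and $\mu=\mu_k\mres\Sigma$. One caveat: the lemma is phrased with $\lim$, while the hypothesis here gives a limit equal to $+\infty$. Either way I would read the hypothesis as a $\liminf$ condition, which is what Federer's density theorem \cite[2.10.19]{Federer69} actually needs.

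\emph{Step 1: reduce to sets of finite measure.} Because of the countable cover $\{E_i\}$ with $\mu_k(E_i\cap\Sigma)<\infty$, it suffices to prove $\mathcal{S}^\beta(Z\cap E_i)=0$ for each $i$. So fix $i$ and set $Z_i=Z\cap E_i$, which is a Borel set with $\mu_k(Z_i)<\infty$.

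\emph{Step 2: apply the lemma for every threshold.} For every $s>0$, each $p\in Z_i$ satisfies $\liminf_{r\to0^+} r^{-\beta}\mu_k(\B(p,r)\cap\Sigma)=+\infty\ge s$. The lemma then gives
\[
s\,\mathcal{S}^\beta(Z_i)\le \mu_k(Z_i)<\infty .
\]
Letting $s\to+\infty$ forces $\mathcal{S}^\beta(Z_i)=0$. Countable subadditivity then yields $\mathcal{S}^\beta(Z)=0$. The argument holds for every $\beta>0$ for which the density hypothesis is assumed.

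\emph{Main obstacle.} The only delicate point is checking the lemma's hypotheses. The densities are computed with balls of $\E$ intersected with $\Sigma$, so the measure should be taken on $X=\E$ as $\mu_k\mres\Sigma$. This is a Borel measure on $\E$, and it admits a countable open cover of finite measure because $\Sigma$ is locally compact in $\E$ and locally has finite Riemannian area. One must also make sure that the lemma is used in the form with $\liminf$ (upper density comparison), which follows from \cite[2.10.19(3)]{Federer69}. Once these are in place the proof is immediate.
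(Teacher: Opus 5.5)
Your proposal is correct and is the argument the paper leaves implicit when it says the preceding lemma ``immediately gives'' the proposition: apply the lemma with an arbitrary threshold $s>0$ and let $s\to+\infty$. Your localization to the pieces $Z\cap E_i$ of finite $\mu_k$-measure is exactly the detail needed to make $s\,\mathcal{S}^\beta(Z\cap E_i)\le \mu_k(Z\cap E_i)$ informative. The one inaccurate remark is in your last paragraph: $\mu_k\mres\Sigma$ need not be locally finite on all of $\E$, since area may accumulate at points of $\overline{\Sigma}\setminus\Sigma$. So take $X=\Sigma$, or $\mu=\mu_k\mres(\Sigma\cap E_i)$ on $X=\E$, rather than $\mu_k\mres\Sigma$ on $X=\E$.
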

Our objective is to apply the previous proposition. The following lemma gives
a key lower estimate for the surface measure localized by a ball centered
on the submanifold. We will use the following notation
$$
{\rm Box}(0,r) = [-r,r]^2 \times [-r^2, r^2] \times [-r^3, r^3],
$$
that is comparable with the metric ball of radius $r>0$, centered at the origin.
This fact will be used in the proof of the next lemma.

\begin{lemma}\label{represelemma}
There exists $0<\lambda<1$ such that the following statement holds. Let $p \in \Sigma$ be a point of a $C^1$ surface in $\E$. Then for $r>0$ small enough, the formula
    \begin{equation}\label{formul4}
        \mu_2(\Sigma \cap \B(p,r)) = r^{d_\Sigma(p)}\int_{\Tilde{\delta}_{\frac{1}{r}}(\Phi^{-1}(\B(0,r)))} (J \Phi) (\Tilde{\delta}_r u) du,
    \end{equation}
holds, where both the induced dilations $\Tilde{\delta}_r(u) = (r^{d_i} u_i, r^{d_j} u_j)$ and the parametrization
    $\Phi: U \rightarrow \E$ of $p^{-1}\Sigma$ are 
    given by Theorem \ref{t:specialcoord}, and $U \subset \mathbb{R}^2$ is an open neighborhood of the origin.
    Furthermore, we have
    \begin{equation}\label{stimalemma}
        \mu_2(\Sigma \cap \B(p,r)) \geq 
      r^{d_\Sigma(p)} \frac{J \Phi(0)}{2}  \mathcal{L}^2\big(\Tilde{\delta}_{\frac{1}{r}}(\Phi^{-1}((F({\rm Box}(0,\lambda r)))) \big),
    \end{equation}
    for $r > 0$ sufficiently small.
\end{lemma}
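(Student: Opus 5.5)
The plan is to reduce everything to a change of variables in the parametrization of Theorem~\ref{t:specialcoord}, and then to compare metric balls with boxes via the homogeneity of the distance.

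\emph{Step 1 (change of variables).} By left invariance of both $d$ and the Riemannian metric, we have $\mu_2(\Sigma\cap\B(p,r))=\mu_2(p^{-1}\Sigma\cap\B(0,r))$. Theorem~\ref{t:specialcoord} applied to $p^{-1}\Sigma$ gives the embedding $\Phi=F\circ\phi:U\to p^{-1}\Sigma$ with $\Phi(0)=0$. For $r$ small, $\B(0,r)\cap p^{-1}\Sigma\subset\Phi(U)$, because $\Phi$ is a homeomorphism onto a relatively open neighbourhood of $0$ and $\B(0,r)$ shrinks to $0$. The area formula for the $C^1$ embedding $\Phi$ then gives
\[
\mu_2(\Sigma\cap\B(p,r))=\int_{\Phi^{-1}(\B(0,r))}J\Phi(v)\,dv .
\]
Here $J\Phi$ is the Jacobian with respect to the left invariant metric; it is continuous and $J\Phi(0)>0$. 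The degrees $d_i,d_j$ in $\tilde\delta_r$ are the degrees of the two graph coordinates of $\phi$, namely $d_i=b_1$ and $d_j=b_2$ in the notation of \eqref{inducedb}. The form of \eqref{e:matriceC} gives $b_1+b_2=d_\Sigma(p)$. The substitution $v=\tilde\delta_r u$ has Jacobian $r^{b_1+b_2}=r^{d_\Sigma(p)}$, and this yields \eqref{formul4}.

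\emph{Step 2 (inner box).} I will choose $\lambda$ using the fact that $F$ composed with the graded coordinates intertwines the dilations, $F(\delta_r y)=\delta_r F(y)$, where $\delta_r$ acts on $y$ with the weights $(1,1,2,3)$. The set $F({\rm Box}(0,1))$ is compact and contains $0$ in its interior. Since $d(F(\cdot),0)$ is continuous and vanishes only at $0$, there is $\lambda\in(0,1)$ with $F({\rm Box}(0,\lambda))\subset\B(0,1)$. This $\lambda$ depends only on $d$ and on the orthonormal graded basis. One subtlety is that the basis $(Y_j)$ varies with $p$. Since the set of orthonormal graded bases is compact, I can take $\lambda$ uniform, or simply allow $\lambda$ to depend on the basis, which is harmless for the statement. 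Applying dilations, ${\rm Box}(0,\lambda r)=\delta_r{\rm Box}(0,\lambda)$, and therefore $F({\rm Box}(0,\lambda r))\subset\B(0,r)$. Consequently
\[
\tilde\delta_{1/r}\big(\Phi^{-1}(F({\rm Box}(0,\lambda r)))\big)\subset\tilde\delta_{1/r}\big(\Phi^{-1}(\B(0,r))\big).
\]

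\emph{Step 3 (lower bound on the Jacobian).} On the smaller domain, the points $\tilde\delta_r u$ lie in $\Phi^{-1}(F({\rm Box}(0,\lambda r)))$. Because the graph coordinates of $\phi$ coincide with coordinates $y_s$ of the box, this set lies in the Euclidean ball of radius at most $C r$ around $0\in U$. By continuity, $J\Phi\ge J\Phi(0)/2$ there once $r$ is small. Restricting the integral in \eqref{formul4} to the smaller set and using this bound gives \eqref{stimalemma}.

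The main obstacle is Step 2: choosing $\lambda$ independently of $p$ and of the varying basis. I would handle it by the compactness argument over orthonormal graded bases described there, as opposed to relying on the continuity estimate alone.
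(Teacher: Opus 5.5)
Your proposal is correct and follows essentially the same route as the paper: left translation plus the area formula with the anisotropic substitution $v=\tilde\delta_r u$ gives \eqref{formul4}, and the homogeneity $F\circ\tilde\delta_r=\delta_r\circ F$ gives $F({\rm Box}(0,\lambda r))=\delta_r F({\rm Box}(0,\lambda))\subset\B(0,r)$, from which \eqref{stimalemma} follows. You also spell out three points the paper leaves implicit, and each is handled correctly: that $\B(0,r)\cap p^{-1}\Sigma$ lies in the parametrized patch, the continuity bound $J\Phi\ge J\Phi(0)/2$ on the shrinking domain, and the uniformity of $\lambda$ over orthonormal graded bases.
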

\begin{proof}
Up to left translations, it suffices to prove the assertion for the metric open ball $\B(0,r)$ in $\E$.
Indeed, left translations turn out to be isometries with
respect to our fixed left invariant Riemannian metric and
the Riemannian measure $\mu_2$ is defined by the same left invariant Riemannian metric. We have 
\begin{equation}\label{repblow}
\mu_2(\Sigma \cap \B(p,r)) = \mu_2(p^{-1} \Sigma \cap \B(0,r)) = \int_{\Phi^{-1}(\B(0,r))} (J\Phi)(u) du,
\end{equation}
where $J \Phi$ is the Riemannian Jacobian of $\Phi$
with respect to the fixed left invariant Riemannian metric.
By the change of variables $u=\Tilde{\delta}_r(v) = (r^{d_i} v_i, r^{d_j} v_j)$ to the right-hand side of \eqref{repblow} and taking into account that $d_\Sigma(p) = d_i + d_j$, 
we achieve \eqref{formul4}.
Let us consider the change of variables given by $F$, 
that is defined in \eqref{analyticdiffeo}. It is easy to notice that
    \begin{align*}
        F^{-1}(\B(0,r)) &= F^{-1}(\delta_r \B(0,1)) \\
        &= F^{-1} \delta_r F F^{-1}(\B(0,1)) \\
        &= \Tilde{\delta}_r \Tilde{\B}(0,1),
    \end{align*}
where $\Tilde{\B}(0,1)$ is a neighbourhood of the origin in $\mathbb{R}^4$. 
We observe that there exists $0<\lambda<1$ such that
$$
 {\rm Box}(0,\lambda r)\subset \Tilde{\delta}_r {\rm Box}(0,\lambda)\subset \Tilde{\delta}_r \Tilde{\B}(0,1) \subset \Tilde{\delta}_r {\rm Box}(0,1/\lambda) = {\rm Box}(0,r/\lambda )
$$
and therefore we get 
\begin{equation}\label{ballexp}
F({\rm Box}(0, r\lambda )) \subset \B(0,r) \subset F({\rm Box}(0, r/\lambda )).
\end{equation} 
Thus, \eqref{stimalemma} is a direct consequence of \eqref{formul4} and of the first inclusion of \eqref{ballexp}.
\end{proof}

\begin{proposition}\label{grado41}
Let $\alpha >1/2$ and consider a $2$-dimensional $C^{1,\alpha}$ smooth submanifold $\Sigma\subset\E$ such that $d(\Sigma) \geq 4$ and $d_{\Sigma}(p) = 3$, for some $p\in\Sigma$. Then we have
$$ \lim_{r \rightarrow 0} \frac{\mu_2(\Sigma \cap \B(p,r))}{r^{d(\Sigma)}} = + \infty.$$
\end{proposition}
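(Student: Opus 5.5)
We use Lemma~\ref{represelemma}: since $d_\Sigma(p)=3$, it gives
\[
\frac{\mu_2(\Sigma\cap\B(p,r))}{r^{d(\Sigma)}}\ \ge\ r^{3-d(\Sigma)}\,\frac{J\Phi(0)}{2}\,\mathcal L^2\big(\tilde\delta_{1/r}(\Phi^{-1}(F({\rm Box}(0,\lambda r))))\big),
\]
and $d(\Sigma)\ge4$ makes $r^{3-d(\Sigma)}\to+\infty$. It therefore suffices to show that the rescaled preimage of the box has $\mathcal L^2$ measure bounded below by a positive constant independent of $r$ small.

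Since $d_\Sigma(p)=3$, Theorem~\ref{t:specialcoord} gives $\alpha_1=\alpha_2=1$, an orthonormal graded basis $(Y_j)$, and the graph parametrization $\phi(x_1,x_3)=(x_1,\phi_2(x),x_3,\phi_4(x))$ of $p^{-1}\Sigma$ with Jacobian \eqref{dinuovomatrix}. The dilations are $\tilde\delta_r(u)=(ru_1,r^2u_3)$. The point $\phi(\tilde\delta_r v)$ lies in ${\rm Box}(0,\lambda r)$ if and only if
\[
|rv_1|\le\lambda r,\quad |\phi_2(rv_1,r^2v_3)|\le\lambda r,\quad |r^2v_3|\le\lambda^2r^2,\quad |\phi_4(rv_1,r^2v_3)|\le\lambda^3r^3 .
\]
We restrict to $v\in Q_c=[-c,c]^2$ with $c\le\lambda$ small, so the first and third conditions hold. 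By the estimate \eqref{stimasuphi2}, $\phi_2(rv_1,r^2v_3)=o(r)$ uniformly on $Q_c$, so the second condition holds for $r$ small.

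The main obstacle is the fourth condition, $|\phi_4(\tilde\delta_r v)|\le\lambda^3r^3$. Here $C^{1,\alpha}$ with $\alpha>1/2$ enters. The point $p$ need not be a point of maximum degree, so the Stokes argument of Theorem~\ref{mainstokestheorem} is unavailable. Instead we use the vanishing at the origin of the degree-$4$ and degree-$5$ coefficients of \eqref{2vector} (the $\widetilde Y_1\wedge\widetilde Y_4$ and $\widetilde Y_3\wedge\widetilde Y_4$ terms), which holds because $d_\Sigma(p)=3$. This gives $\partial_{x_3}\phi_4(0)=0$ and $\partial_{x_1}\phi_4(0)=0$, with no requirement that $\xi_{13}=0$. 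By Hölder continuity of $\nabla\phi_4$,
\[
|\phi_4(x)|\le C|x|^{1+\alpha},
\]
and hence $|\phi_4(\tilde\delta_r v)|\le C r^{1+\alpha}$, which is too crude by itself. We refine this with the pointwise constraint relations. Although the coefficients vanish only at $0$, the expressions obtained in \eqref{firstv2} and in the $\partial_{x_1}\phi_4$ formula of Proposition~\ref{Gammas} show that $\partial_{x_3}\phi_4(x)$ and $\partial_{x_1}\phi_4(x)$ equal explicit polynomial terms in $x_1,\phi_2,x_3$ up to errors controlled by $|x|^\alpha$. Write
\[
\phi_4(\tilde\delta_r v)=\int_0^1\Big(rv_1\,\partial_{x_1}\phi_4+r^2v_3\,\partial_{x_3}\phi_4\Big)(s\tilde\delta_r v)\,ds .
\]
The $\partial_{x_3}\phi_4$ term contributes $r^2\cdot O(r)$ from the $\tfrac12\xi_{13}x_1$ term, of size about $c^2r^3$, plus $r^2\cdot O(r^\alpha)$. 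We would like the $\partial_{x_1}\phi_4$ term to contribute $r\cdot O(r^{2\alpha})$, which is $o(r^3)$ only if $1+2\alpha>3$. That fails for $\alpha<1$, so the splitting needs care. The first thing to try is to use the $\widetilde Y_3\wedge\widetilde Y_4$ constraint to show $\partial_{x_1}\phi_4=O(|x_1|^2+|x_3|+|\phi_2||x|)+$ Hölder errors. With $\phi_2(\tilde\delta_rv)=O(r^{1+\alpha})$ (by Hölder continuity, since $\partial_{x_1}\phi_2(0)=0$ and $|x_3|\lesssim r^2$), this gives a contribution of $O(c^3r^3)+O(r^{2+2\alpha})$, and $2+2\alpha>3$ exactly when $\alpha>1/2$.

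Combining the two terms, $|\phi_4(\tilde\delta_rv)|\le Kc^2r^3+o(r^3)$ on $Q_c$. Choosing $c$ with $Kc^2<\lambda^3/2$ makes the fourth condition hold for small $r$. Then $\tilde\delta_{1/r}(\Phi^{-1}(F({\rm Box}(0,\lambda r))))\supset Q_c$, the $\mathcal L^2$ measure is at least $4c^2$, and the limit is $+\infty$.
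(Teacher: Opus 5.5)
\textbf{There is a genuine gap: the fixed-square claim fails when $d(\Sigma)=5$, and its estimate is wrong as written when $d(\Sigma)=4$.}

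Your whole argument depends on the rescaled preimage containing a fixed square $Q_c$, i.e.\ $|\phi_4(rv_1,r^2v_3)|\le\lambda^3r^3$ on $Q_c$. Via \eqref{stimalemma} this would even give $\mu_2(\Sigma\cap\B(p,r))\gtrsim r^3$. Under the hypothesis $d(\Sigma)\ge4$ this is false.

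Take $\phi(x_1,x_3)=(x_1,0,x_3,|x_1|^{1+\alpha})$.
\begin{itemize}
\item It is $C^{1,\alpha}$, and its tangent plane at $0$ is spanned by $Y_1,Y_3$, so $d_\Sigma(0)=3$.
\item By \eqref{generalvect}, the $\widetilde Y_3\wedge\widetilde Y_4$ coefficient is $-\partial_{x_1}\phi_4-\frac{\xi_{13}}{2}x_3$. This is nonzero at $(x_1,0)$ with $x_1\neq0$, so $d(\Sigma)=5$.
\item The fourth box condition forces $|v_1|\le\lambda^{3/(1+\alpha)}r^{(2-\alpha)/(1+\alpha)}\to0$. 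The second inclusion of \eqref{ballexp} then gives $\mu_2(\Sigma\cap\B(0,r))=O(r^{2+3/(1+\alpha)})=o(r^3)$.
\end{itemize}

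What breaks is your treatment of the ``H\"older errors''. When $d(\Sigma)=5$, the degree-5 coefficient vanishes only at the origin, so it is merely $O(|x|^\alpha)$. Through the term $rv_1\,\partial_{x_1}\phi_4$ it contributes $O(r^{1+\alpha})$ to $\phi_4(\tilde\delta_r v)$, not $O(c^3r^3)+O(r^{2+2\alpha})$.

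Even for $d(\Sigma)=4$ the bookkeeping is wrong as written. You bound the degree-4 error by $O(r^\alpha)$, so the $\partial_{x_3}\phi_4$ term is $r^2\cdot O(r^\alpha)=O(r^{2+\alpha})$. You then count this as $o(r^3)$, which fails for $\alpha<1$. In this case the fixed square can be rescued by a different path: integrate first along $x_1=0$, where $\partial_{x_3}\phi_4(0,t)=O(|t|^\alpha)$, then in $x_1$, using that the $\widetilde Y_3\wedge\widetilde Y_4$ coefficient vanishes identically. That gives $Kc^2r^3+O(r^{2+2\alpha})$. But this is not what you wrote, and it does not reach $d(\Sigma)=5$.

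\textbf{The missing idea.} You do not need an $r$-independent lower bound. It suffices that the rescaled measure decays more slowly than $r^{d(\Sigma)-3}$.

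The paper uses only $|\phi_4(u)|\le C_4|u|^{1+\alpha}$ and the estimate \eqref{disuguagrado4} on $\phi_2$. With these it shows the rescaled preimage contains a set $E_r$ whose $x_3$-extent stays of order $\lambda^2$, while its $x_1$-extent is of order $r^{(2-\alpha)/(1+\alpha)}$. Hence $\mathcal L^2(E_r)\gtrsim r^{(2-\alpha)/(1+\alpha)}$.

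Then \eqref{stimalemma} gives
\[
\mu_2(\Sigma\cap\B(p,r))/r^{d(\Sigma)}\gtrsim r^{(2-\alpha)/(1+\alpha)-(d(\Sigma)-3)}\to+\infty,
\]
because $(2-\alpha)/(1+\alpha)<1$ exactly when $\alpha>1/2$. This is where the H\"older exponent enters. The argument covers $d(\Sigma)=4$ and $d(\Sigma)=5$ alike and never uses the identities coming from \eqref{2vector}.
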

\begin{proof}
Using Theorem~\ref{t:specialcoord} and its notation, by our assumptions we must have $\alpha_1=\alpha_2=1$ such that $d_\Sigma(p) = \alpha_1 + 2\alpha_2$ and $\Phi : U \rightarrow \E$ is a $C^1$ smooth parametrization of 
    $p^{-1}\Sigma$, where  
\begin{equation*}
        \phi(x_1, x_3) = (x_1, \phi_2(x), x_3, \phi_4(x))
\end{equation*} 
is defined on an open neighborhood $U\subset\R^2$ of $0$. We have also used the change of variables $F:\R^4\to\E$ of Theorem~\ref{t:specialcoord}, such
that $\Phi=F\circ\phi$. In particular, by \eqref{e:matriceC} in our case, we infer that
$$
D \phi = 
\begin{pmatrix}
    1   & 0 \\
    o(1) & \star \\
    0 & 1 \\
    o(1) & o(1) 
\end{pmatrix}.
$$
Thus, there exists $C_4 > 0$ such that 
$\lvert \phi_4(u) \rvert \leq C_4 \lvert u \rvert^{1 + \alpha}$ for every $|u|$
sufficiently small. 
Let $0 < \lambda < 1$ be as in Lemma \ref{represelemma} and let us consider
$$
\Tilde{\delta}_{\frac{1}{r}}\big(\phi^{-1}({\rm Box}(0,\lambda r))\big) = 
\Biggl\{ (x_1, x_3) \ : \ \frac{\lvert x_1 \rvert}{\lambda} \leq 1, \; \frac{\lvert x_3 \rvert}{\lambda^2} \leq 1, \; \frac{\lvert \phi_2(r x_1, r^2 x_3) \rvert }{\lambda r} \leq 1, \; \frac{\lvert \phi_4(r x_1, r^2 x_3) \rvert}{(\lambda r)^3} \leq 1  \Biggr\}.
$$
Moreover, from \eqref{e:matriceC}, we obtain 
\begin{align}\label{disuguagrado4}
\notag    \lvert \phi_2( x_1, x_3) \rvert 
    &=\left| \int_0^1 \langle \nabla \phi_2( t x_1 ,  t x_3 ), (x_1 , x_3) \rangle {\rm dt}\right| \\
 \notag   &\le \left|\int_0^1 \langle \nabla \phi_2( t x_1 ,  t x_3 ) - \nabla \phi_2 (0 , 0) , (x_1, x_3)\rangle {\rm dt}\right| + \left|\int_0^1 \langle \nabla \phi_2(0,0 ), (x_1 , x_3) \rangle {\rm dt} \right| 
    \\
    & \leq C \lvert (x_1, x_3)\rvert^{1+\alpha} +  \lvert \partial_{x_3} \phi_2(0) x_3  \rvert.
\end{align}
Thus, for every $(x_1, x_3)$ in any fixed compact set and $r>0$ sufficiently small, we infer that
\begin{equation}
 C r^\alpha \lvert ( x_1, r x_3)\rvert^{1+\alpha} + \lvert \partial_{x_3} \phi_2(0)\rvert r \lambda^2 \le \lambda,
\end{equation}
establishing the estimate
$\lvert \phi_2(r x_1, r^2 x_3) \rvert\le 
\lambda r$.
Thanks to our estimates on $\phi_4$ and $\phi_2$, one can verify that $\Tilde{\delta}_{\frac{1}{r}}\big(\phi^{-1}({\rm Box}(0,\lambda r))\big)$ contains the subset
$$ E_r = \Bigl\{ (x_1, x_3) \ : \ \lvert x_1 \rvert \leq \lambda, \quad \lvert x_3 \rvert \leq \lambda^2, 
\quad \lvert (x_1, r x_3) \rvert \leq \widetilde{C}_4 r^{\frac{2 - \alpha}{1 + \alpha}} 
\lambda^{\frac{4}{1 + \alpha}}
\Bigr\}.
$$
We observe that in Lemma \ref{represelemma} it is possible to assume that 
$0<\lambda<1$ is sufficiently small, such that $C_4\widetilde{C}_4^{1+\alpha}\lambda<1$.
Using the change of variable $x'_3 = r x_3$, we obtain 
$$
\mathcal{L}^2(E_r) = \frac{1}{r} \mathcal{L}^2 \Big( \Bigl\{ (x_1, x'_3) \ : \ \lvert x_1 \rvert \leq \lambda, \; \lvert x'_3 \rvert \leq r \lambda^2, 
\; \lvert (x_1, x'_3) \rvert \leq C_4 r^{\frac{2 - \alpha}{1 + \alpha}} \lambda^{\frac{4}{1 + \alpha}}\Bigr\}\Big).$$
Therefore, for $r > 0$ sufficiently small 
we get that
\begin{align*}
    \mathcal{L}^2(E_r) &\geq \frac{1}{r} \mathcal{L}^2 \Bigg( \Biggl\{ (x_1, x_3) \ : \ \lvert x_3 \rvert \leq r \lambda^2, \; \max \{ \lvert x_1 \rvert, \lvert x_3 \rvert \} \leq \frac{1}{2} C_4 r^{\frac{2 - \alpha}{1 + \alpha}} \lambda^{\frac{4}{1 + \alpha}}\Biggr\}\Bigg) \\
    &= \frac{1}{r}\mathcal{L}^2 \Big( \Bigl\{ (x_1, x_3) \ : \ \lvert x_3 \rvert \leq r \lambda^2, \; \lvert x_1 \rvert \leq 
    \frac{1}{2} C_4 r^{\frac{2 - \alpha}{1 + \alpha}} \lambda^{\frac{4}{1 + \alpha}}\Bigr\}\Big)\\
    &= 2 C_4 \lambda^{\frac{6 + 2 \alpha}{1 + \alpha}} r^{\frac{2 - \alpha}{1 + \alpha}},
\end{align*}
using the assumption $\alpha > 1/2$.
Combining \eqref{stimalemma} and the previous estimate, 
we conclude that
\begin{align*}
\frac{\mu_2(\Sigma \cap \B(p,r))}{r^{d(\Sigma)}} &\geq C_4 \lambda^{\frac{6 + 2 \alpha}{1 + \alpha}} \frac{r^{\frac{2 - \alpha}{1 + \alpha}}}{r^{d(\Sigma) - 3}} J \Phi(0) \\
&= C_4\lambda^{\frac{6 + 2 \alpha}{1 + \alpha}} J \Phi(0)
r^{\frac{2 - \alpha}{1 + \alpha} - 1} \longrightarrow + \infty  \quad \mbox{as } r \rightarrow 0,
\end{align*}
since $\alpha > \frac12$. This concludes the proof.
\end{proof}

\begin{proposition}\label{grado42}
Let $\alpha >1/2$ and consider a $2$-dimensional $C^{1,\alpha}$ smooth submanifold $\Sigma\subset\E$, such that $d(\Sigma) \geq 4$ and $d_{\Sigma}(p) = 2$
for some $p\in\Sigma$.
Then we have
$$ \lim_{r \rightarrow 0} \frac{\mu_2(\Sigma \cap\B(p,r))}{r^{d(\Sigma)}} = + \infty.$$   
\end{proposition}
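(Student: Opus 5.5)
The plan is to follow the scheme of Proposition~\ref{grado41}: bound $\mu_2(\Sigma\cap\B(p,r))$ from below using Lemma~\ref{represelemma}, then feed the result into Proposition~\ref{propfortras}. The degree-2 case should actually be simpler, because both induced degrees equal $1$. Since $d_\Sigma(p)=2$, Theorem~\ref{t:specialcoord} applied to $p^{-1}\Sigma$ gives $\alpha_1=2$ and $\alpha_2=\alpha_3=0$. So there is a $C^{1,\alpha}$ parametrization $\Phi=F\circ\phi$ around $0$ with
\[
\phi(x_1,x_2)=\big(x_1,x_2,\phi_3(x),\phi_4(x)\big),\qquad D\phi(0)=\begin{pmatrix}1&0\\0&1\\0&0\\0&0\end{pmatrix}.
\]
By Theorem~\ref{horizontalnon}, the case $d(\Sigma)=2$ cannot occur. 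In any case we assume $d(\Sigma)\ge4$.

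First, since $\nabla\phi_3(0)=\nabla\phi_4(0)=0$ and $\phi$ is $C^{1,\alpha}$, the integral representation used in \eqref{disuguagrado4} yields constants $C_3,C_4>0$ with
\[
|\phi_3(u)|\le C_3|u|^{1+\alpha},\qquad |\phi_4(u)|\le C_4|u|^{1+\alpha}
\]
for $|u|$ small. Here the induced dilations are $\tilde\delta_r(u)=ru$, and Lemma~\ref{represelemma} gives
\[
\mu_2(\Sigma\cap\B(p,r))\ge r^{2}\,\frac{J\Phi(0)}{2}\,\mathcal L^2\big(\tilde\delta_{1/r}(\phi^{-1}({\rm Box}(0,\lambda r)))\big).
\]
The rescaled set on the right consists of the points with $|x_1|,|x_2|\le\lambda$, $|\phi_3(rx)|\le(\lambda r)^2$ and $|\phi_4(rx)|\le(\lambda r)^3$.

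Next, we find an explicit subset of it. Suppose $|x|\le c\,r^{\frac{2-\alpha}{1+\alpha}}$ with $c=(\lambda^3/C_4)^{1/(1+\alpha)}$. Then $|\phi_4(rx)|\le C_4 r^{1+\alpha}|x|^{1+\alpha}\le\lambda^3r^3$. Under the same condition $|\phi_3(rx)|\le C_3 C_4^{-1}\lambda^3 r^3\le\lambda^2r^2$ for $r$ small, so the $\phi_3$ constraint is automatically weaker. Also $|x_i|\le\lambda$ holds for small $r$. Hence the rescaled set contains a Euclidean disc of radius $c\,r^{\frac{2-\alpha}{1+\alpha}}$, and therefore
\[
\mu_2(\Sigma\cap\B(p,r))\ge \tfrac{\pi c^2}{2}J\Phi(0)\, r^{2+\frac{2(2-\alpha)}{1+\alpha}}.
\]

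Finally, $r^{d(\Sigma)}\le r^4$ for $r<1$, so the ratio $\mu_2(\Sigma\cap\B(p,r))/r^{d(\Sigma)}$ is bounded below by a positive constant times $r^{\frac{2(2-\alpha)}{1+\alpha}-2}=r^{\frac{2-4\alpha}{1+\alpha}}$. This exponent is negative exactly when $\alpha>1/2$, so the ratio diverges as $r\to0$. The only delicate point is to check that the $C^{1,\alpha}$ decay of $\phi_4$ is the binding constraint, and that the smallness conditions on $\lambda$ and $r$ from Lemma~\ref{represelemma} are compatible. I expect this bookkeeping to be routine, and easier than in Proposition~\ref{grado41}, since no term linear in $x$ appears.
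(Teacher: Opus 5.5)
Your proposal is correct and is essentially the paper's proof: the same graph parametrization $\phi=(x_1,x_2,\phi_3,\phi_4)$ from Theorem~\ref{t:specialcoord}, the $C^{1,\alpha}$ bounds $|\phi_j(u)|\le C|u|^{1+\alpha}$ for $j=3,4$, the lower bound \eqref{stimalemma}, and the observation that the binding $\phi_4$ constraint leaves a region of linear size $r^{\frac{2-\alpha}{1+\alpha}}$, giving the exponent $\frac{2-4\alpha}{1+\alpha}<0$ exactly when $\alpha>1/2$. The differences are cosmetic: you use a disc where the paper uses the square $A_r$, and your step $r^{d(\Sigma)}\le r^4$ explicitly covers $d(\Sigma)=5$, whereas the paper's final computation tacitly takes $d(\Sigma)=4$.
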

\begin{proof}
  Applying Theorem \ref{t:specialcoord}, and its notation, 
    by our assumptions we must have $\alpha_1=2$ such that $d_\Sigma(p) = \alpha_1$ and $\Phi : U \rightarrow \E$ is a $C^1$ smooth parametrization of 
    $p^{-1}\Sigma$, where  
\begin{equation*}
        \phi(x_1, x_2) = (x_1, x_2, \phi_3(x), \phi_4(x))
\end{equation*} 
is defined on an open neighborhood $U \subset \R^2$ of $0$.  We have also used the change of variables $F:\R^4\to\E$ of Theorem~\ref{t:specialcoord}, such
that $\Phi=F\circ\phi$. In particular, by \eqref{e:matriceC} in our case, we get
\begin{equation*}
D \phi = 
\begin{pmatrix}
    1 & 0 \\
    0 & 1 \\
    o(1) & o(1) \\
    o(1) & o(1) 
\end{pmatrix}.
\end{equation*}
Then, there exist positive real constant $c_3, c_4$  such that 
\begin{equation}\label{stime24}
\lvert \phi_3(u) \rvert \leq c_3 \lvert u \rvert^{1+ \alpha} \quad  \mbox{and} \quad \lvert \phi_4(u) \rvert \leq c_4 \lvert u \rvert^{1 + \alpha} \quad \mbox{for } |u| \;\mbox{small}.
\end{equation}
Let $0 < \lambda < 1$ as in Lemma \ref{represelemma}, then we set
$$
\Tilde{\delta}_{\frac{1}{r}}\big(\phi^{-1}({\rm Box}(0,\lambda r))\big) = \Biggl\{ (x_1, x_2) \ : \ \frac{\lvert x_1 \rvert}{\lambda} \leq 1, \; \frac{\lvert x_2 \rvert}{\lambda} \leq 1, \; \frac{\lvert \phi_3(r x_1, r x_2) \rvert }{(\lambda r)^2} \leq 1, \; \frac{\lvert \phi_4(r x_1, r x_2) \rvert}{(\lambda r)^3} \leq 1  \Biggr\}.
$$
We observe that in Lemma \ref{represelemma} we may choose a possibly smaller $0<\lambda<1$, such that $c_4\widetilde{c}_4^{1+\alpha}\lambda<1$ and $c_3\widetilde{c}_3^{1+\alpha}\lambda<1$. Thus, applying the estimates \eqref{stime24}, 
the latter set contains 
\begin{align*}
&\Bigl\{ (x_1, x_2) \ : \ \lvert x_1 \rvert \leq \lambda, \; \lvert x_2 \rvert \leq \lambda, \; \lvert (x_1, x_2) \rvert \leq \widetilde{c}_3\lambda^{\frac{3}{1 + \alpha}} r^{\frac{1-\alpha}{1 + \alpha}}, \; \lvert (x_1, x_2) \rvert \leq \widetilde{c}_4 \lambda^{\frac{4}{1 + \alpha}} r^{\frac{2 - \alpha}{1 + \alpha}}  \Bigr\} \\
&\supseteq \Bigl\{ (x_1, x_2)  :   
\max \{\lvert x_1\rvert, \lvert x_2 \rvert \}
\leq \frac12\Tilde{c}_4\lambda^{\frac{4}{1 + \alpha}} r^{\frac{2 - \alpha}{1 + \alpha}} \Bigr\}= A_r.
\end{align*}
Clearly, we have $\mathcal{L}^2(A_r) =  \widetilde{c}_4^2 \lambda^{\frac{8}{1 + \alpha}} r^{\frac{4 - 2 \alpha}{1 + \alpha}}$.
Using \eqref{stimalemma}, we obtain 
\begin{align*}
\frac{\mu_2(\Sigma \cap \B(p,r))}{r^{d(\Sigma)}} &\geq  \widetilde{c}_4^2 
\lambda^{\frac{8}{1 + \alpha}} \frac{r^{\frac{4 - 2 \alpha}{1 + \alpha}}}{r^{4 - d_\Sigma(p)}} \frac{J \Phi(0)}{2} \\
&= \widetilde{c}_4^2
\lambda^ {\frac{8}{1 + \alpha}} \frac{J \Phi(0)}{2} r^{\frac{4 - 2\alpha}{1 + \alpha} - 2} \longrightarrow + \infty \quad \mbox{as } r \rightarrow 0,
\end{align*}
whenever $\alpha > \frac12$, concluding the proof. 
\end{proof}

\begin{proposition}\label{grado31}
Let $\alpha >1/3$ and consider a $2$-dimensional $C^{1,\alpha}$ smooth submanifold $\Sigma\subset\E$, such that $d(\Sigma) =3$ and $d_{\Sigma}(p) = 2$
for some $p\in\Sigma$.
Then we have
$$ \lim_{r \rightarrow 0} \frac{\mu_2(\Sigma \cap \B(p,r))}{r^{d(\Sigma)}} = + \infty.$$    
\end{proposition}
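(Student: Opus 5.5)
We follow the same scheme as in Proposition~\ref{grado42}, replacing the target exponent $4$ by $3$. By Theorem~\ref{t:specialcoord} and the assumption $d_\Sigma(p)=2$, we have $\alpha_1=2$. Hence there is a $C^1$ parametrization $\Phi=F\circ\phi$ of $p^{-1}\Sigma$ near $0$, with
\[
\phi(x_1,x_2)=(x_1,x_2,\phi_3(x),\phi_4(x)),\qquad \nabla\phi_3(0)=\nabla\phi_4(0)=0 .
\]
Since $\Sigma$ is $C^{1,\alpha}$, the mean value argument of \eqref{disuguagrado4} yields constants $c_3,c_4>0$ such that
\[
|\phi_3(u)|\le c_3|u|^{1+\alpha},\qquad |\phi_4(u)|\le c_4|u|^{1+\alpha}
\]
for $|u|$ small, exactly as in \eqref{stime24}.

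Next we apply Lemma~\ref{represelemma} with $d_\Sigma(p)=2$. The induced dilations are $\Tilde\delta_r(u)=(ru_1,ru_2)$, so
\[
\Tilde{\delta}_{\frac1r}\big(\phi^{-1}({\rm Box}(0,\lambda r))\big)=\Big\{|x_1|,|x_2|\le\lambda,\ |\phi_3(rx)|\le(\lambda r)^2,\ |\phi_4(rx)|\le(\lambda r)^3\Big\}.
\]
The estimate $c_3 r^{1+\alpha}|x|^{1+\alpha}\le \lambda^2r^2$ holds as soon as $|x|\le \tilde c_3\lambda^{2/(1+\alpha)}r^{(1-\alpha)/(1+\alpha)}$. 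The estimate for $\phi_4$ holds as soon as $|x|\le \tilde c_4\lambda^{3/(1+\alpha)}r^{(2-\alpha)/(1+\alpha)}$. For small $r$ the second constraint is the binding one, since $(2-\alpha)/(1+\alpha)>(1-\alpha)/(1+\alpha)$. Shrinking $\lambda$ if necessary to absorb the constants, the set above contains a square $A_r$ of side comparable to $r^{(2-\alpha)/(1+\alpha)}$. Therefore
\[
\mathcal L^2(A_r)\ge c\,r^{\frac{4-2\alpha}{1+\alpha}}.
\]

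By \eqref{stimalemma},
\[
\frac{\mu_2(\Sigma\cap\B(p,r))}{r^3}\ge c\,\frac{J\Phi(0)}{2}\,r^{2-3}\,r^{\frac{4-2\alpha}{1+\alpha}}=c'\,r^{\frac{4-2\alpha}{1+\alpha}-1}=c'\,r^{\frac{3-3\alpha}{1+\alpha}} .
\]
This exponent is negative only when $\alpha>1$, so the crude estimate fails for $\alpha\in(1/3,1]$. This is the main obstacle: the generic bound $|\phi_4|\lesssim|u|^{1+\alpha}$ is too weak.

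The extra input is $d(\Sigma)=3$. Near $0$, the coefficients of $\widetilde Y_1\wedge\widetilde Y_4$, $\widetilde Y_2\wedge\widetilde Y_4$ and $\widetilde Y_3\wedge\widetilde Y_4$ in \eqref{2vector} must vanish identically, because they have degrees $4$, $4$ and $5$. Using $\phi^{12}_u=1$, the vanishing of the $\widetilde Y_1\wedge\widetilde Y_4$ and $\widetilde Y_2\wedge\widetilde Y_4$ coefficients expresses $\partial_{x_2}\phi_4$ and $\partial_{x_1}\phi_4$ through $\phi_3$, $x_1$, $x_2$ and the first derivatives of $\phi_3$. For instance,
\[
\partial_{x_2}\phi_4=\tfrac12(\xi_{13}x_1+\xi_{23}x_2)\partial_{x_2}\phi_3-\tfrac16\xi_{12}x_1(\xi_{23}x_2+\xi_{13}x_1)-\tfrac{\xi_{23}}2\phi_3 ,
\]
and similarly for $\partial_{x_1}\phi_4$. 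Since $|\nabla\phi_3(u)|\lesssim|u|^\alpha$ and $|\phi_3|\lesssim|u|^{1+\alpha}$, every term is $O(|u|^{1+\alpha})$, except possibly the pure quadratic polynomial terms $x_1(\xi_{23}x_2+\xi_{13}x_1)$ and $x_2(\xi_{13}x_1+\xi_{23}x_2)$. These come from $\phi^*\theta_4=0$ on the degree-$2$ part, and integrating them gives a cubic polynomial in $x$. I expect these polynomial parts to be harmless. After integration, $\phi_4(u)=P(u)+O(|u|^{2+\alpha})$ with $P$ a homogeneous cubic, and $|P(ru)|\le C r^3|u|^3\le (\lambda r)^3$ once $|u|\le c\lambda$.

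Plugging this improved bound into the box condition, the $\phi_4$ constraint becomes $|x|\lesssim \lambda^{3/(2+\alpha)}r^{(1-\alpha)/(2+\alpha)}$, which is weaker than the $\phi_3$ constraint. The binding constraint is then $\phi_3$, giving
\[
\mathcal L^2(A_r)\gtrsim r^{\frac{2-2\alpha}{1+\alpha}},\qquad \frac{\mu_2(\Sigma\cap\B(p,r))}{r^3}\gtrsim r^{\frac{2-2\alpha}{1+\alpha}-1}=r^{\frac{1-3\alpha}{1+\alpha}}\to+\infty
\]
exactly when $\alpha>1/3$. This matches the hypothesis, which confirms the plan. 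The delicate step is checking the explicit differential relations from \eqref{2vector}, so that $\phi_4$ really is a cubic plus $O(|u|^{2+\alpha})$. This amounts to integrating $d\phi_4$ along rays using the relations above.
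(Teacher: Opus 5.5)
Your proposal is correct and follows essentially the same route as the paper. Both arguments use $d(\Sigma)=3$ to force the $\widetilde Y_1\wedge\widetilde Y_4$ and $\widetilde Y_2\wedge\widetilde Y_4$ coefficients to vanish, which gives $|\nabla\phi_4|=O(|u|^{1+\alpha})$ and hence $|\phi_4|\le c_4|u|^{2+\alpha}$; the $\phi_3$ constraint then binds in the box and yields the exponent $(1-3\alpha)/(1+\alpha)$.

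Your separate treatment of the quadratic polynomial terms via a cubic $P$ is unnecessary but harmless. The paper simply absorbs them using $|u|^2\le|u|^{1+\alpha}$ for $|u|\le 1$, and in fact their contribution to $\int_0^1\nabla\phi_4(su)\cdot u\,ds$ vanishes identically.
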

\begin{proof}
  Applying Theorem \ref{t:specialcoord}, and its notation, 
    by our assumptions we must have $\alpha_1=2$ such that $d_\Sigma(p) = \alpha_1$ and $\Phi : U \rightarrow \E$ is a $C^1$ smooth parametrization of 
    $p^{-1}\Sigma$, where  
\begin{equation*}
        \phi(x_1, x_2) = (x_1, x_2, \phi_3(x), \phi_4(x))
\end{equation*} 
is defined on an open neighborhood $U \subset \R^2$ of $0$. Moreover, there exists an orthonormal graded basis $(Y_1, \ldots, Y_4)$ 
with change of variables $F:\R^4\to\E$ of Theorem~\ref{t:specialcoord}, such
that $\Phi=F\circ\phi$. In particular, by \eqref{e:matriceC} in our case, we have
$$
D \phi
=
\begin{pmatrix}
    1  & 0 \\
    0 & 1 \\
    o(1) & o(1) \\
    o(1) & o(1) \\
\end{pmatrix}.
$$
Thus, there exists $c_3 > 0$ such that $|\phi_3(u)| \leq c_3 |u|^{1+\alpha}$ for $|u|$ small. The same holds for $\phi_4$ but we notice that it does not suffice to obtain our claim. We have to exploit the assumption $d(\Sigma) < 4$: in other words 
the coefficient of $Y_1 \wedge Y_4$ must be equal to zero in \eqref{generalvect}, equivalently 
\begin{equation}\label{secondvv}
\partial_{x_2} \phi_4 = -\frac{\xi_{23}}{2}\phi_3 + \frac{1}{2}  (\xi_{13}x_1 + \xi_{23}x_2) 
\partial_{x_2} \phi_3  - \frac{1}{6}\xi_{12}(\xi_{23}x_1 x_2 +
\xi_{13} x_1^2).
\end{equation}
The same holds for the coefficient of $Y_2 \wedge Y_4$, hence 
 locally in $U$ we have
\begin{equation}\label{firstvv}
\partial_{x_1} \phi_4 =-\frac{\xi_{13}}{2}\phi_3 + \frac{1}{2}  (\xi_{13}x_1 + \xi_{23}x_2) 
\partial_{x_1} \phi_3 
 + \frac16 \xi_{12} (\xi_{13} x_1 x_2 + 
 \xi_{23} x_2^2).
\end{equation}
 By the form of \eqref{e:matriceC} and the identities \eqref{firstvv} and \eqref{secondvv} we get that
$$\lvert \nabla \phi_4(x_1,x_2) \rvert = 
O(\lvert (x_1, x_2) \rvert^{1+\alpha}) \quad \mbox{in }  U.$$ 
This implies that $\lvert \phi_4(u) \rvert \leq c_4 \lvert u \rvert^{2+\alpha}$ for $\lvert u \rvert$ small enough and $c_4>0$.
Let $0 < \lambda < 1$ be as in Lemma \ref{represelemma} and let us consider
\begin{align*}
\Tilde{\delta}_{\frac{1}{r}}\big(\phi^{-1}({\rm Box}(0,\lambda r))\big)&=\Biggl\{ (x_1, x_2) \ : \ \frac{\lvert x_1 \rvert}{\lambda} \leq 1, \; \frac{\lvert x_2 \rvert}{\lambda} \leq 1, \; \frac{\lvert \phi_3(r x_1, r x_2) \rvert }{(\lambda r)^2} \leq 1, \; \frac{\lvert \phi_4(r x_1, r x_2) \rvert}{(\lambda r)^3} \leq 1  \Biggr\}.
\end{align*}
Observing that Lemma \ref{represelemma} holds also for a smaller $0<\lambda<1$, we apply our estimates to $\phi_3$ and $\phi_4$ assuming that $c_4\widetilde{c}_4^{2+\alpha}\lambda<1$ and $c_3\widetilde{c}_3^{1+\alpha}\lambda<1$.
As a result, the previous set contains
\begin{align*}
&\Biggl\{ ( x_1, x_2) :  \lvert x_1 \rvert \leq \lambda, \; \lvert x_2 \rvert \leq \lambda, \; \lvert (x_1, x_2) \rvert \leq \widetilde{c}_3\lambda^{\frac{3}{1 + \alpha}} r^{\frac{1-\alpha}{1 + \alpha}}, \; \lvert (x_1, x_2) \rvert \leq \widetilde{c}_4\lambda^{\frac{4}{2+\alpha}} r^{\frac{1-\alpha}{2+\alpha}} \Biggr\} \\ 
& \supseteq \Bigl\{ (x_1, x_2)  :  \max \{\lvert x_1\rvert, \lvert x_2 \rvert \}
\leq \frac12\Tilde{c}_4\lambda^{\frac{3}{1 + \alpha}} r^{\frac{1 - \alpha}{1 + \alpha}} \Bigr\} = E_r.
\end{align*}
Then, combining \eqref{stimalemma} with $\mathcal{L}^2(E_r) = \widetilde{c}_4^2 \lambda^{\frac{6}{1+\alpha}} r^{\frac{2 - 2\alpha}{1 + \alpha}}$, 
we get
\begin{align*}
    \frac{\mu_2(\Sigma \cap \B(p,r))}{r^{d(\Sigma)}} &\geq \widetilde{c}_4^2 \lambda^{\frac{6}{1+\alpha}}\frac{ r^{\frac{2 - 2\alpha}{1 + \alpha}}}{r^{d(\Sigma) - d_\Sigma(p)}}  \frac{J \Phi(0)}{2}\\
&=\widetilde{c}_4^2 
\lambda^ {\frac{6}{1 + \alpha}} \frac{J \Phi(0)}{2} r^{\frac{2-2\alpha}{1 + \alpha} - 1} \longrightarrow + \infty \quad \mbox{as } r \rightarrow 0,
\end{align*}
if 
$\alpha > \frac13$. This concludes the proof.
\end{proof}

\begin{theorem}\label{teotrascu}
	Let $\Sigma\subset\E$ be a $2$-dimensional $C^{1,\alpha}$ smooth submanifold with $2\le d(\Sigma)\le 4$ and let $C(\Sigma)\subset \Sigma$ be the subset of points of degree less than $d(\Sigma)$. Then we have
	\begin{equation}\label{eq:SN0}
		\mathcal{S}^{d(\Sigma)}(C(\Sigma)) = 0
	\end{equation}
	whenever $(d(\Sigma)-2)/d(\Sigma)<\alpha\le 1$. 
\end{theorem}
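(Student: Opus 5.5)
The plan is to split according to the value of $d(\Sigma)$ and the possible pointwise degrees of singular points. Theorem~\ref{horizontalnon} excludes degree-$2$ surfaces, so the case $d(\Sigma)=2$ is vacuous. For a $2$-dimensional surface every point has degree at least $2$. Hence $C(\Sigma)$ consists of points of degree $2$ when $d(\Sigma)=3$, and of points of degree $2$ or $3$ when $d(\Sigma)=4$. In each case we write $C(\Sigma)$ as a finite union of the Borel sets $C_j=\{p\in\Sigma: d_\Sigma(p)=j\}$ with $j<d(\Sigma)$. These sets are Borel because $p\mapsto d_\Sigma(p)$ is lower semicontinuous: the degree is a maximum over nonvanishing continuous coefficients of the tangent $2$-vector. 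It then suffices to prove $\mathcal{S}^{d(\Sigma)}(C_j)=0$ for each $j$.

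The key tool is Proposition~\ref{propfortras} with $\mu_k=\mu_2$ and $\beta=d(\Sigma)$. It reduces the claim to the pointwise blow-up
$$
\lim_{r\to0^+} r^{-d(\Sigma)}\mu_2(\Sigma\cap\B(p,r))=+\infty\qquad\text{for every } p\in C_j .
$$
This divergence has already been established case by case.
\begin{itemize}
\item If $d(\Sigma)=3$ and $d_\Sigma(p)=2$, we use Proposition~\ref{grado31}. It needs $\alpha>1/3=(3-2)/3$, which matches the hypothesis exactly.
\item If $d(\Sigma)=4$ and $d_\Sigma(p)=3$, we use Proposition~\ref{grado41}.
\item If $d(\Sigma)=4$ and $d_\Sigma(p)=2$, we use Proposition~\ref{grado42}.
\end{itemize}
The last two both require $\alpha>1/2=(4-2)/4$, again matching the hypothesis.

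One technical point must be checked before applying Proposition~\ref{propfortras}, which rests on the preceding Federer-type lemma. That lemma needs an open covering of the ambient space by sets of finite $\mu_2$-measure. The measure $\mu_2\mres\Sigma$ is locally finite on the embedded $C^1$ submanifold, so we cover $\Sigma$ by countably many relatively compact coordinate patches. We may also localize $\Sigma$, replacing it by a relatively compact open subset, and then use countable subadditivity of $\mathcal{S}^{d(\Sigma)}$.

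There is a second point. The local lower bounds in Propositions~\ref{grado41}--\ref{grado31} use the special coordinates of Theorem~\ref{t:specialcoord} centered at each $p$. Their constants, such as $C_4$, $c_3$, $c_4$ and $J\Phi(0)$, depend on $p$. This is harmless, since Proposition~\ref{propfortras} only requires pointwise divergence at each $p$, not uniformity.

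The main obstacle I expect is making sure the hypotheses line up exactly, in particular the threshold on $\alpha$ in each subcase. These thresholds are precisely where the $C^{1,\alpha}$ regularity enters. In Proposition~\ref{grado31} the improved decay $|\phi_4(u)|=O(|u|^{2+\alpha})$, coming from the vanishing of the $Y_1\wedge Y_4$ and $Y_2\wedge Y_4$ coefficients, is what allows the weaker threshold $1/3$. Beyond that, the proof is an assembly of the earlier results.
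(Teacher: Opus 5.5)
Your proposal is correct and follows the paper's proof. For $d(\Sigma)=4$ you combine Propositions~\ref{grado41} and~\ref{grado42}, for $d(\Sigma)=3$ you use Proposition~\ref{grado31}, and in both cases you conclude via Proposition~\ref{propfortras}. Your extra remarks fill in details the paper leaves implicit: Borel measurability of $C(\Sigma)$ via lower semicontinuity of $d_\Sigma$, localization to patches of finite $\mu_2$-measure, and the case $d(\Sigma)=2$, which is vacuous simply because $C(\Sigma)=\emptyset$ there, without needing Theorem~\ref{horizontalnon}.
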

\begin{proof}
	If $d(\Sigma) = 4$, then both Proposition \ref{grado41} and Proposition \ref{grado42} 
	imply that
	$$ \lim_{r \rightarrow 0} \frac{\mu_2(\Sigma \cap \B(p,r))}{r^4} = + \infty$$
	whenever $p\in\Sigma$ and $d_\Sigma(p)<4$. 
	Thus, Proposition~\ref{propfortras} immediately gives \eqref{eq:SN0}.
	In the case $d(\Sigma)=3$, Proposition \ref{grado31} yields
	$$
	 \lim_{r \rightarrow 0} \frac{\mu_2(\Sigma \cap \B(p,r))}{r^3} = + \infty
	$$
	whenever $d_\Sigma(p)<3$ and again \eqref{eq:SN0} holds, 
	due to Proposition \ref{propfortras}.
\end{proof}

{\bf Acknowledgements.} The authors wish to thank Francesca Tripaldi for inspiring conversations on the algebraic properties of the left invariant forms of the Engel group.

\bibliographystyle{abbrv} 
\bibliography{References}
\end{document}